\documentclass[12pt]{amsart}
\usepackage[utf8x]{inputenc}
\usepackage{amssymb}
\usepackage{amsmath}
\usepackage{amsthm}
\usepackage[mathscr]{euscript}
\usepackage[normalem]{ulem}
\usepackage{hyperref}
\usepackage{appendix}
\usepackage{tikz}
\usepackage{tikz-cd}
\usepackage{verbatim}
\usepackage{color}
\usepackage{mathrsfs}

\usepackage[margin=3 cm,heightrounded=true,centering]{geometry}
\usepackage{graphicx}
\usepackage{wrapfig}
\usepackage{cancel}

\numberwithin{equation}{section}

\newtheorem{theorem}{Theorem}[section]
\newtheorem{proposition}[theorem]{Proposition}
\newtheorem{lemma}[theorem]{Lemma}
\newtheorem{remark}[theorem]{Remark}
\newtheorem{corollary}[theorem]{Corollary}
\newtheorem{definition}[theorem]{Definition}

\newtheorem{question}[theorem]{Question}

\DeclareMathOperator{\ric}{Ric}

\DeclareMathOperator{\id}{id}

\title[Conformal boundary rigidity from null geodesic travel times]{Conformal boundary rigidity from null geodesic travel times}

\author{Gabriel P. Paternain}
\address[Gabriel P. Paternain]{Department of Mathematics, University of Washington, Seattle, WA 98195-4350, USA}
\email{gpp24@uw.edu}

\author{Eric Woolgar}
\address[Eric Woolgar]{Department of Mathematical and Statistical Sciences and Theoretical Physics Institute, University of Alberta, Edmonton, Alberta, Canada T6G 2N8}
\email{ewoolgar@ualberta.ca}

\begin{document}

\date{\today}

\begin{abstract}
The gravitational field of a distant, isolated system is manifested by the conformally invariant Weyl tensor. Thus the conformal structure far from the system encodes the system's gravitational mass. It also encodes the causal structure, thereby linking it to the mass. For asymptotically anti-de Sitter (AdS) spacetimes, this link led to a novel positive mass theorem of Page, Surya, and the second author \cite{PSW} which did not rely on any traditional energy condition. Here we ask whether that theorem has a rigidity case. Specifically, we consider all null geodesics in an asymptotically AdS spacetime that depart from the Penrose conformal infinity, travel through spacetime, and return to conformal infinity. If all such geodesics from a given point refocus at an antipodal point at infinity, is the spacetime conformal to anti-de Sitter space? It is easy to answer the question if the asymptotically AdS spacetime either (i) obeys the null energy condition in 3 or 4 spacetime dimensions, or (ii) is static (in any dimension), and we give simple proofs in those cases. We then answer the question in the case of globally stationary, asymptotically AdS spacetimes, by applying the theory of magnetic geodesics on the Riemannian manifold-with-boundary obtained by quotienting by the stationary Killing vector field. The question has an analogue for asymptotically flat spacetimes, which we also discuss.
\end{abstract}

\maketitle

\section{Introduction}\label{section1}
\setcounter{equation}{0}

\noindent It is well-known that light travel times are affected by the presence of a massive object. This is the \emph{time delay of light} effect. The time delay for light rays passing near the Sun has been measured and used as observational verification of the general theory of relativity. A natural question is to ask in which spacetimes do light rays experience no time delay. We pose (and in some special cases answer) versions of this question below.

Asymptotically anti-de Sitter spacetimes are $(n+1)$-dimensional spacetimes, $n\ge 2$, whose curvatures asymptote to that of a vacuum constant curvature spacetime with negative cosmological constant. Sectional curvatures approach $-1$ at infinity. Such spacetimes admit a Penrose conformal compactification such that conformal infinity has the topology ${\mathbb R}\times {\mathbb S}^{n-1}$ and has an induced conformal class of metrics containing a representative $-dt^2\oplus g({\mathbb S}^{n-1},{\rm can})$ where $g({\mathbb S}^{n-1},{\rm can})$ means the round or canonical metric on the $(n-1)$-sphere. Thus the conformal boundary is a timelike ``cylinder'' in the conformally extended spacetime. The construction is standard; see for example \cite[Section 11.1]{Wald} (which, however, emphasizes primarily the asymptotically flat version wherein the conformal boundary is a double-napped cone).

The conformal boundary of an asymptotically anti-de Sitter spacetime is therefore a conformal class of $n$-dimensional spacetimes in its own right. In particular, it has a causal structure. We will denote it by
\begin{equation}
\label{eq1.1}
{\mathcal I}:= \left ( {\mathbb R}\times {\mathbb S}^{n-1}, \left [-dt^2\oplus g({\mathbb S}^{n-1},{\rm can}) \right ] \right ),
\end{equation}
where the square brackets denote the conformal class. Considered as a spacetime in its own right, ${\mathcal I}$ has no time delay effect. More precisely, identify ${\mathcal I}$ with ${\mathbb R}\times{\mathbb S}^{n-1}\ni (t,\theta_1,\dots,\theta_{n-1})$ and let $p\in {\mathcal I}$. For the moment, choose the metric to be $-dt^2\oplus g({\mathbb S}^{n-1},{\rm can})$. Via isometries, we may place $p$ at one pole of the ${\mathbb S}^{n-1}$ factor at some time $t=0$ so $p=(0,0,\dots,0)$. Then every null geodesic of ${\mathcal I}$ from $p$ reconverges with every other such null geodesic at a point $q$ which lies at the opposite pole to $p$ in ${\mathbb S}^{n-1}$ and has $t$-coordinate $t=\pi$ (recalling that the canonical metric in any dimension is a round metric with sectional curvatures $1$ and, therefore, diameter $\pi$). Letting $\theta_1$ be the polar angle, we have $q=(\pi,\pi,0,\dots,0)$. Since conformal transformations preserve causal structure, the pairing of $p$ with $q$ does not depend on our choice of representative metric $-dt^2\oplus g({\mathbb S}^{n-1}$ within its conformal class. We will say that any such pair $p$ and $q$ are \emph{spacetime antipodes on ${\mathcal I}$}. Note that spacetime antipodes on ${\mathcal I}$ are defined using \emph{only the causal structure on the conformal infinity ${\mathcal I}$} of an asymptotically anti-de Sitter spacetime.

\begin{definition}[Timelike boundary Penrose property and holographic causality]\label{definition1.1}
Let $M$ be an asymptotically anti-de Sitter spacetime with conformal infinity ${\mathcal I}$, and let ${\bar M}=M\cup{\mathcal I}$. Then $M$ obeys the \emph{timelike boundary Penrose property} \cite{Cameron} if each pair of spacetime antipodes on ${\mathcal I}$ can be joined by a timelike curve. If no pair of spacetime antipodes on ${\mathcal I}$ can be joined by a timelike curve, we say that the spacetime has the \emph{holographic causality} property \cite{PSW}.
\end{definition}

It is clear that no timelike curve on ${\mathcal I}$ joins any antipodal pair of points. It is also clear that if there is a timelike curve from $p$ to $q$, then there will be a timelike curve that contains no points of ${\mathcal I}$ other than its endpoints at $p$ and $q$, so on any open interval in the domain of its parameter such a curve is a timelike curve in $M$. Such curves are sometimes said to be curves in spacetime $M$ with \emph{idealized endpoints} $p\in{\mathcal I}$ and $q\in{\mathcal I}$.

Positive mass asymptotically AdS spacetimes do not obey the timelike boundary Penrose property \cite{Woolgar}. Indeed, to prove a positive mass theorem in this setting, one can replace the usual assumption of a (conformally non-invariant) pointwise energy condition by the (conformally invariant) assumption that holographic causality holds \cite{PSW}. It is well-known that positive mass theorems usually imply that zero-mass manifolds are \emph{rigid}, meaning that they are single points in the space of metrics (modulo diffeomorphisms that preserve the asymptotic conditions listed in Definition \ref{definition2.1}). In the case of the theorem of \cite{PSW}, one might ask whether the conformal class of the anti-de Sitter metric arises as a type of rigidity (of conformal classes rather than of metrics). Specifically, we pose the following question.

\begin{question}\label{question1.2}
Let $M$ be an 
asymptotically AdS spacetime with conformal boundary ${\mathcal I}$. Say that for every $p\in {\mathcal I}$, every null geodesic in $M$ with past (idealized) endpoint at $p$ has future (idealized) endpoint at $q\in {\mathcal I}$, which is therefore the spacetime antipode for $p$. Is the spacetime conformally isometric to anti-de Sitter space?
\end{question}

The question echoes two well-known problems in Riemannian geometry. One is Blaschke's \emph{wiedersehenmannigfaltigkeit} or \emph{returning manifold} problem (see \cite[p 493]{Berger} for a discussion), which asks if the sphere is the only manifold such the every geodesic leaving any given initial point returns to that point's antipode. The other is the \emph{boundary rigidity problem}, a famous inverse problem which asks whether an unknown Riemannian metric on the interior of a manifold-with-boundary can be discerned from information about the lengths of geodesics that begin and end on the boundary. In the case of asymptotically hyperbolic manifolds with a conformal boundary-at-infinity, the analogous rigidity problem has recently been addressed in \cite{GGSU} by considering the renormalized lengths of complete (infinitely long) geodesics. A consequence of \cite[Theorem 2]{GGSU} is that in an asymptotically hyperbolic manifold, if every complete geodesic has the same renormalized length then there is a diffeomorphism between the manifold and standard hyperbolic space that is an isometry up to corrections of infinite order in the defining function for the conformal boundary-at-infinity.

Viewed as an inverse problem, Question \ref{question1.2} asks only about a single case with very special boundary data. It may yield to geometric considerations without resort to the full analytical machinery of inverse problems. Indeed, in special cases we will use geometric arguments to answer this question in the affirmative. One case will be when the conformal class of a 4-dimensional spacetime metric admits an asymptotically anti-de Sitter representative in which the null energy condition holds. Then the spacetime with this representative metric will be isometric to anti-de Sitter space. The other case will be when the spacetime (of 3 or more dimensions) is asymptotically anti-de Sitter and static (so that it admits a timelike, hypersurface-orthogonal Killing vector field). We will then generalize this result to the stationary case (where the timelike Killing vector field need not be hypersurface-orthogonal). The main result of this work is a partial answer to our question given as Theorem \ref{theorem4.4}, which we summarize here as follows.
\begin{theorem}
\noindent \emph{Question \ref{question1.2} is answered in the affirmative when the spacetime is globally stationary.}
\end{theorem}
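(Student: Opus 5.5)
We propose to write the globally stationary metric in standard form and exploit conformal invariance of null geodesics. Let $\xi=\partial_t$ be the timelike Killing field and write
\[
g=-V^2\left(dt+\omega\right)^2+h ,
\]
where $V>0$, $\omega$ is a one-form on the quotient $N=M/\mathbb{R}$, and $h$ is a Riemannian metric on $N$. Null geodesics are conformally invariant up to reparametrization, so we may replace $g$ by $\tilde g=V^{-2}g=-(dt+\omega)^2+\sigma$ with $\sigma=V^{-2}h$. The asymptotically AdS hypothesis makes $(N,\sigma)$, after compactification, a compact Riemannian manifold with boundary whose boundary is the round $\mathbb{S}^{n-1}$ at infinity. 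The asymptotic conditions (after possibly rescaling by a boundary defining function) give a smooth metric $\bar\sigma$ on the closed ball $\bar N$ in which the boundary is totally geodesic, or at least convex. For $\tilde g$, a null geodesic projects to a curve $\gamma$ on $N$, parametrized by $\sigma$-arclength, that solves the magnetic geodesic equation with magnetic field $\Omega=d\omega$. The elapsed time is
\[
\Delta t=\int_\gamma \left(|\dot\gamma|_\sigma-\omega(\dot\gamma)\right)=L_\sigma(\gamma)-\int_\gamma\omega .
\]
This is exactly the magnetic length functional.

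\textbf{Translating the hypothesis.} The hypothesis says that every magnetic geodesic leaving a boundary point $x\in\partial N$ arrives at the antipodal point $-x$ with $\Delta t=\pi$. This is the same as in AdS, where $N$ is a hemisphere and $\omega=0$. So the boundary magnetic action function of $(\bar N,\sigma,\Omega)$ agrees with that of the round hemisphere on antipodal pairs. The plan is as follows.

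\textbf{Step 1: a length/volume comparison.} Integrate over the unit sphere bundle using Santaló's formula for the magnetic flow. The magnetic flow preserves Liouville measure, and the exit-time map is determined by the boundary data. This gives $\mathrm{Vol}(SN)$ in terms of the boundary exit times and the lapses. The exactness of $\Omega=d\omega$ lets us integrate $\int_\gamma\omega$ over all magnetic geodesics. By Stokes and the symmetry of the flow under reversal combined with $\omega\mapsto-\omega$, the average of this term vanishes or has a sign.

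\textbf{Step 2: Blaschke/Croke-type rigidity.} We would then show that every magnetic geodesic from $x$ reaching $-x$ with the fixed travel time forces both $\Omega=0$ and the absence of conjugate points. From this we would deduce that $\sigma$-geodesics from any boundary point all have length $\pi$ and meet at the antipode. A Wiedersehen/Green–Croke type argument on the double of $\bar N$, or Croke's rigidity for hemispheres via Santaló together with the isoperimetric/Berger–Kazdan inequality, then yields that $(\bar N,\sigma)$ is isometric to the round hemisphere. With $\Omega=0$, $\omega$ is exact, so $\omega=df$, and $t\mapsto t+f$ removes it. Hence $\tilde g$ is the conformally compactified AdS metric $-dt^2+\sigma_{\text{hemisphere}}$, which is what the theorem asserts.

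\textbf{Main obstacle.} The hardest step is killing the magnetic term: proving that $\Omega=0$ follows from the travel-time data. We would attempt it by comparing the flows of $\Omega$ and $-\Omega$. A curve reversed is a magnetic geodesic for $-\Omega$, and its travel time is $L+\int\omega$. Since the hypothesis applies to future-directed null geodesics in both directions between antipodes, the two families must both give travel time $\pi$. Averaging then gives a pure Riemannian length identity, Santaló gives the volume of the hemisphere, and the equality case of the Berger–Kazdan inequality should force $\Omega\equiv0$ and roundness simultaneously.
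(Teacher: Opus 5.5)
\textbf{There is a genuine gap: the step that kills the magnetic field is not proved, and the mechanism you propose for it does not work.}

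Your set-up matches the paper. After the conformal rescaling, future null geodesics project to magnetic geodesics of $(\sigma,d\omega)$, and the coordinate travel time is the Ma\~n\'e action $L_\sigma(\gamma)-\int_\gamma\omega$. The hypothesis says every chord from $x$ exits at $-x$ with action $\pi$. Santal\'o's formula together with the oddness of $\omega$ on $SN$ gives $\int_{SN}\omega\,d\mu_L=0$, hence $\mathrm{Vol}(N,\sigma)$ equals the volume of the hemisphere.

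The failure is in your ``two directions'' idea. The time reversal $\bar\gamma$ of a chord $\gamma$ is a magnetic geodesic for $-\Omega$, i.e.\ the projection of a \emph{past}-directed null geodesic. The hypothesis only controls $L-\int\omega$ along $\Omega$-chords, and the $\Omega$-chords running from $-x$ back to $x$ are different curves. So nothing gives $L(\gamma)+\int_\gamma\omega=\pi$ for an individual chord. Averaging the two families reproduces only $\int_{\partial_+SN}L(\gamma_{x,v})\,d\mu=\pi\,\mu(\partial_+SN)$, which is the volume identity again.

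That identity is not rigid: you can perturb the interior metric at fixed volume. The equality case of Berger--Kazdan or Croke needs an additional pointwise input, such as $d_\sigma(x,-x)\ge\pi$ for boundary antipodes or a Wiedersehen property, which you have not derived. It is also a purely Riemannian statement and cannot by itself ``force $\Omega\equiv0$''. Your Step 2 (``we would then show that \dots forces $\Omega=0$'') is therefore the whole content of the theorem, left unproved. You also need two facts you do not address: non-trapping of the magnetic flow, which Santal\'o requires, and $\Omega|_{\partial N}=0$, so that glancing trajectories stay in $\partial N$.

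The missing idea is a \emph{pointwise} sign for $\int_\gamma\omega$. It comes from a minimization principle that uses the bound $|\omega|_\sigma<1$ supplied by stationarity, a hypothesis your proposal never uses. The paper proceeds as follows.
\begin{itemize}
\item This bound puts the energy level $\tfrac12$ above Ma\~n\'e's critical value, so free-time minimizers between boundary points exist.
\item Since every chord has action $\pi$, every chord is a global minimizer between its endpoints (Lemma~\ref{lem:minimizing}).
\item The same bound gives non-trapping (Lemma~\ref{lem:non_trapping}).
\item The first-variation formula with constant action shows the scattering relation is that of the hemisphere, hence boundary reversibility.
\item Comparing a chord from $x$ to $y$ with the time reversal of the returning chord $\sigma$ gives $\int_\sigma\omega\ge0$ for every chord.
\item Santal\'o then forces $\int_\sigma\omega=0$.
\item So the reversed chord minimizes for both $\pm\omega$ and satisfies both Euler--Lagrange equations, giving $Y=-Y$, i.e.\ $\Omega=0$.
\item Bangert's theorem then gives the hemisphere.
\end{itemize}

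Your own two-direction idea does become correct once this minimization is available. For any curve $c$ from $x$ to $-x$, you have $L(c)-\int_c\omega\ge\pi$, and likewise $L(c)+\int_c\omega\ge\pi$ (the same bound for the reversed curve from $-x$ to $x$). Adding these gives $d_\sigma(x,-x)\ge\pi$. The argument in the proof of Remark~\ref{remark4.2} then kills $\Omega$ directly, with no Berger--Kazdan equality case needed.
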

But the question is intended to be general. Can it be answered without invoking any assumption of time symmetry, any energy condition, or any other restrictive assumption?

We only expect to determine the metric up to conformal isometries that preserve the asymptotically AdS property because we have formulated the question using only the causal structure of spacetime. To motivate this, recall that all asymptotically AdS metrics that are conformal to anti-de Sitter spacetime have zero Ashtekar--Magnon mass \cite{AM} since this mass vanishes whenever the Weyl tensor falls off quickly enough (which it trivially does for AdS since it vanishes identically in that case). The vanishing of Ashtekar--Magnon mass is a conformally invariant property. From this perspective, the question posed above is a question concerning rigidity for the positive mass theorem of \cite{PSW}. That theorem does not assume a pointwise energy condition. In place of an energy condition, the theorem of \cite{PSW} assumes holographic causality. Then we may ask what happens when an asymptotically AdS spacetime lies at the border between holographic causality and the Penrose property, in the sense that every null geodesic from each $p\in {\mathcal I}$ arrives at the spacetime antipode of $p$, giving rise to Question \ref{question1.2}.

In Subsection \ref{subsection2.1} we recall the definition of asymptotically anti-de Sitter spacetimes and prove a simple lemma. Subsection \ref{subsection2.2} reviews the definitions of stationary and static spacetimes. In Section \ref{section3} we give the two proofs of the affirmative answer to our question under the further assumptions either that (i) the metric is asymptotically simple, (3- or) 4-dimensional, and conformally related to a metric for which the null energy condition holds, or (ii) that spacetime is static. These arguments are mostly elementary.

In Section \ref{section4}, we generalize from static to stationary asymptotically anti-de Sitter spacetimes, answering the question in the affirmative in this more general case. Our method is to map null geodesics in spacetime to so-called magnetic geodesics on the quotient manifold obtained by equating spacetime points that lie on the same orbit of the stationary Killing vector field. Travel times for the null geodesics map to values of a line integral, the so-called Ma\~n\'e action, along magnetic geodesics. We then study the first variation formula for the Ma\~n\'e action, or in fact the related energy integral, of magnetic geodesics that join boundary points of this quotient (optical) manifold. The result, which is of independent interest, is that if every inextendible magnetic geodesic on a compact Riemannian manifold with totally geodesic round sphere boundary has the same Ma\~n\'e action (defined in equation \eqref{eq3.7}), then the optical manifold is a standard round hemisphere. This is Theorem \ref{theorem4.1} and Corollary \ref{corollary4.3}. Interestingly, while Question \ref{question1.2} for stationary spacetimes would be fully answered by assuming that the magnetic geodesics leaving a given boundary point in the quotient (Riemannian) manifold return to the boundary at the initial point's antipode, we do not need to assume this to prove Theorem \ref{theorem4.1} and Corollary \ref{corollary4.3}; that the returning magnetic geodesic reaches the antipode follows as a consequence of the constancy of the Ma\~n\'e action. The main result of the paper, Theorem \ref{theorem4.4}, then follows easily. These results are found in Subsection \ref{section4.2}, while the proof of Theorem \ref{theorem4.1} is postponed to Subsection \ref{section4.6}. Other subsections contain background material and preliminary results.

Section \ref{section5} poses the analogous question to Question \ref{question1.2} for asymptotically flat spacetimes and answers it affirmatively when the metric is conformally related to a metric for which the null energy condition holds. For the reader's convenience and to make the article more self-contained, an appendix gives a derivation of the fact that null geodesics in stationary spacetimes map to magnetic geodesics in the quotient space (and to actual geodesics when the spacetime is static). This result is not easy to find in the recent mathematical relativity literature, but appears elsewhere in the literature in \cite{Germinario}, \cite[Section 4.2]{Stefanov}, \cite{OPS},  and \cite{GHWW09} which recasts the problem in the language of Finsler geometry. A second appendix contains the proofs of two simple lemmata used in Section \ref{subsection3.1}.

\subsection*{Acknowledgments} GPP was supported by National Science Foundation (US) grant DMS--2347868. The research of EW is supported by Natural Sciences and Engineering Research Council of Canada Discovery Grant RGPIN--2022--03440. EW is grateful to Spyros Alexakis for a discussion of the static case, to Greg Galloway for a comment concerning the null energy condition proof, and to Peter Cameron and Giulio Sanzeni for discussions.

\section{Background}\label{section2}
\setcounter{equation}{0}

\subsection{Asymptotic conditions}\label{subsection2.1}
In Section \ref{section5}, we will need a definition of an asymptotically flat spacetime. This can be found in \cite[p 276]{Wald} more than sufficient for our purposes and is readily available so we will not repeat it here. We will give a definition of an asymptotically anti-de Sitter spacetime, since this is central to our main discussion and since it differs in certain respects from the textbook asymptotically flat version. The primary difference is that in an asymptotically flat spacetime of $(n+1)$-dimensions, conformal infinity is a double-napped null cone in the extended conformal metric with vertex $i^0$ (spatial infinity), while in the asymptotically AdS case conformal infinity is a timelike cylinder.

\begin{definition}[Asymptotically AdS spacetime]\label{definition2.1}
We will say that a spacetime $(M,g)$ is \emph{asymptotically anti-de Sitter} if there is a spacetime manifold $({\tilde M}, {\tilde g})$ with closure ${\bar M}={\tilde M}\cup {\mathcal I}$, ${\mathcal I}=\partial {\bar M}$, a diffeomorphism $\psi:{\tilde M}\to M$, and a function $u \ge 0$ on ${\bar M}$ such that
\begin{itemize}
\item [i)] $\ric_g=-(n-1)g+{\mathcal O}(u^2)$,
\item [ii)] ${\tilde g}=u^2 \psi^*g$,
\item [iii)] $u(p)=0$ iff $p\in \partial {\bar M}$,
\item [iv)] $du\neq 0$ on $\partial {\bar M}$ (for $d$ the exterior derivative on ${\bar M}$), and
\item [v)] ${\tilde g}$ extends twice differentiably to ${\mathcal I}$ and induces on ${\mathcal I}$ a conformal equivalence class of Lorentzian metrics $\left [-dt^2+g({\mathbb S}^{n-1},{\rm can})\right ]$.
\end{itemize}
Furthermore, an asymptotically AdS (or asymptotically flat; see Section \ref{section5}) spacetime is \emph{asymptotically simple} every null geodesic in $(M,g)$ extends to have two endpoints (one past and one future) on ${\mathcal I}$.
\end{definition}
It follows from point (i) and the differentiability assumption in point (v) of the definition that $|du|_{\tilde g}=1$, and that $u$ can be chosen such that ${\mathcal I}$ is totally geodesic in the conformal metric ${\tilde g}$. Also, every null geodesic in $(M,g)$ is, up to reparametrization, an incomplete null geodesic in $({\tilde M},{\tilde g})$. When we say that a null geodesic in $(M,g)$ extends to acquire endpoints on ${\mathcal I}$, we mean that it extends when considered as an incomplete geodesic in $({\tilde M},{\tilde g})$. Then we have the following simple lemma.

\begin{lemma}[Interiors of null geodesics do not meet ${\mathcal I}$]\label{lemma2.2}
Say that ${\tilde \gamma}$ is a null geodesic in the conformal metric ${\tilde g}=u^2 g$ that appears in the definition of an asymptotically anti-de Sitter spacetime. Further assume that every null geodesic from $p\in{\mathcal I}$ passes through the spacetime antipode $q\in{\mathcal I}$ of $p$. Then either ${\tilde \gamma}$ lies entirely on ${\mathcal I}$ or it does not meet ${\mathcal I}$ at any point between $p$ and $q$.
\end{lemma}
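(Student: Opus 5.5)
The plan is to use two facts: ${\mathcal I}$ is totally geodesic in $({\bar M},{\tilde g})$, and null geodesics of ${\mathcal I}$ from $p$ first reconverge at the antipode $q$.

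Suppose ${\tilde \gamma}$ is a null geodesic of ${\tilde g}$ from $p$ that meets ${\mathcal I}$ at some point $r$ strictly between $p$ and $q$. I will show that ${\tilde \gamma}$ then lies in ${\mathcal I}$.

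The first step is to show that ${\tilde \gamma}$ is tangent to ${\mathcal I}$ at $r$. The curve stays in ${\bar M}$, and ${\mathcal I}=\{u=0\}$ with $u\ge 0$ on ${\bar M}$. So the function $f=u\circ{\tilde\gamma}$ is nonnegative and vanishes at the interior parameter value $s_r$. Thus $s_r$ is an interior minimum of a $C^1$ function, so $f'(s_r)=du({\tilde\gamma}'(s_r))=0$. Since $du\neq 0$ on ${\mathcal I}$, this says ${\tilde\gamma}'(s_r)\in T_r{\mathcal I}$.

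The second step uses that ${\mathcal I}$ is totally geodesic, which the text after Definition \ref{definition2.1} gives for a suitable choice of $u$. In a totally geodesic hypersurface, a ${\tilde g}$-geodesic tangent to the hypersurface at one point coincides with the geodesic of the induced metric having the same initial data. Since ${\tilde g}$ is $C^2$, the geodesic equation has Lipschitz coefficients, so geodesics are determined uniquely by their initial data. Hence ${\tilde\gamma}$ agrees, on its whole interval of definition, with the geodesic of ${\mathcal I}$ through $r$ with velocity ${\tilde\gamma}'(s_r)$. That vector is null for the induced metric, so ${\tilde\gamma}$ lies entirely on ${\mathcal I}$ and is a null geodesic of ${\mathcal I}$, which is the first alternative of the lemma. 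If ${\tilde\gamma}$ never touches ${\mathcal I}$ strictly between $p$ and $q$, we are in the second alternative.

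The main obstacle is the endpoint $p$ itself, where ${\tilde\gamma}$ may leave ${\mathcal I}$ transversally; this causes no problem because the tangency argument is applied only at interior touching points. One should also check that a curve contained in ${\mathcal I}$ starting at $p$ does reach $q$ without an earlier conjugate-type return. This is just the stated null reconvergence on ${\mathcal I}$, so it is consistent with the lemma's phrase ``between $p$ and $q$''.
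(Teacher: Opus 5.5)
Your proof is correct and has the same structure as the paper's. Contact with ${\mathcal I}$ at an interior parameter value is either tangential, in which case total geodesicity of ${\mathcal I}$ plus uniqueness of $\tilde g$-geodesics forces $\tilde\gamma\subset{\mathcal I}$, or transversal, which is impossible. Your interior-minimum argument for the transversal case ($u\circ\tilde\gamma\ge 0$ vanishes at $s_r$, so $du(\tilde\gamma'(s_r))=0$) is a cleaner version of the paper's ``discontinuous tangent'' remark, and it shows that neither the refocusing hypothesis nor the paper's parenthetical deformation to a timelike curve is needed for that step.
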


\begin{proof}
First, ${\tilde \gamma}$ cannot meet ${\mathcal I}$ tangentially unless it lies entirely on ${\mathcal I}$ because ${\mathcal I}$ is totally geodesic with respect to ${\tilde g}$. Next, it cannot meet ${\mathcal I}$ tranversely at any point $r\in {\mathcal I}$ between $p$ and $q$ since, if it did, it would still have to be part of a null geodesic that reaches $q$. But any curve meeting $r$ transversely and continuing on to $q$ would necessarily have distintinuous tangent, and any causal curve with discontinuous tangent is not a geodesic (and could be deformed to a smooth timelike curve from $p$ to $q$, placing $q$ in the timelike future of $p$, contrary to assumption).
\end{proof}

This lemma shows that although spacetime antipodes on ${\mathcal I}$ are defined using the conformal metric on ${\mathcal I}$, they can also be defined using only null geodesics in spacetime, since every inextendible null geodesic in spacetime with past idealized endpoint $p$ will have future idealized endpoint $q$. This will be useful in Section \ref{section5}.

\subsection{Stationary and static spacetimes}\label{subsection2.2}
A spacetime is \emph{stationary} if there is a nowhere vanishing timelike Killing vector field (a vector field whose orbits are timelike curves along which translation is an isometry) and is \emph{static} if this vector field is hypersurface-orthogonal. More precisely, we make the following definition.

\begin{definition}[Asymptotically AdS stationary spacetime]\label{definition2.3}
An asymptotically anti-de Sitter spacetime is \emph{globally stationary} if it satisfies the conditions of Definition \ref{definition2.1} and possesses a globally defined timelike Killing vector field $\frac{\partial}{\partial t}$ with norm $N>0$, $\frac{\partial N}{\partial t}=0$.
\end{definition}

\begin{definition}[Fermat (optical) manifold]\label{definition2.4}
The manifold $\Sigma$ obtained by quotienting a stationary spacetime by orbits of the timelike Killing vector field $\frac{\partial}{\partial t}$ is called the \emph{Fermat manifold} or the \emph{optical manifold}.
\end{definition}

Let $\varphi:M\to \Sigma$ be the quotienting map, so it is constant on integral curves of $\frac{\partial}{\partial t}$. We can give $\Sigma$ a Riemannian matric $h$ defined so that the pullback obeys $\left (\varphi^* h\right )\left ( \frac{\partial}{\partial t}, \cdot \right )=0$. In a convenient abuse of notation, we will drop the pullback and treat $h$ as both a Riemannian metric on $\Sigma$ and a symmetric $(0,2)$-tensor of rank $n-1$ on $M$. Similarly, there is a 1-form $\omega$ on $\Sigma$ defined so that its pullback obeys
$$-N^2\left ( dt+\varphi^*\omega\right )=\left ( \frac{\partial}{\partial t}\right )^{\flat}=g\left ( \cdot, \frac{\partial}{\partial t}\right ) .$$
That is, $-N^2\left ( dt+\varphi^*\omega\right )$ is the 1-form metric-dual to $\frac{\partial}{\partial t}$. Once again, we will ignore the pullback and treat $\omega$ as both a 1-form on $\Sigma$ and on $M$. Note that on $M$, then $\pounds_{\frac{\partial}{\partial t}}\omega=0$ and $\pounds_{\frac{\partial}{\partial t}}h=0$. The spacetime metric takes the form
\begin{equation}
\label{eq2.1}
\begin{split}
ds^2 =&\, -N^2 \left ( dt+\omega\right )^2 +h\\
=&\,-N^2dt^2-N^2\left ( \omega\otimes dt +dt\otimes\omega\right )+\left ( h-N^2\omega\otimes\omega\right ) .
\end{split}
\end{equation}

\begin{definition}[Fermat (optical) metric]\label{definition2.5}
On the optical manifold the Riemannian metric ${\tilde h}:=h/N^2$ is called the \emph{Fermat metric} or the \emph{optical metric}.
\end{definition}

\begin{remark}\label{remark2.6}
Any such spacetime has the following properties.
\begin{itemize}
\item [i)] Since $N=\left \| \frac{\partial}{\partial t}\right \|$ does not vanish, stationary spacetimes with ergoregions are excluded from consideration.
\item [ii)] The metric $d{\tilde s}^2=ds^2/N^2$ extends to conformal infinity and induces the first fundamental form $-dt^2\oplus {\hat h}$ there, where by ${\hat h}$ is the restriction of $h/N^2$ to $T{\partial M}$).
\item [iii)] From Definition \ref{definition2.1}{\rm{.(v)}}, the second fundamental form of ${\mathcal I}$, as defined by $d{\tilde s}^2$, vanishes so ${\mathcal I}$ is totally geodesic with respect to the conformally rescaled spacetime metric.
\end{itemize}
\end{remark}

\begin{remark}\label{remark2.7}
Since ${\mathcal I}$ is totally geodesic with respect to $d{\tilde s}^2$, then $\partial \Sigma$ is totally geodesic with respect to the optical metric ${\tilde h}$.
\end{remark}

Finally, we make the following definition.
\begin{definition}[Asymptotically AdS static spacetime]\label{definition2.8}
An asymptotically anti-de Sitter spacetime is \emph{globally static} if it is globally stationary and the timelike Killing vector field $\frac{\partial}{\partial t}$ in Definition \ref{definition2.3} is hypersurface-orthogonal.
\end{definition}

The optical manifold has boundary homeomorphic to ${\mathbb S}^{n-1}$, so a collar neighbourhood of the boundary is homotopy-equivalent to ${\mathbb S}^{n-1}$ and is hence simply connected for $n\ge 3$. So is the domain in spacetime which maps to it under the quotient. The 1-form metric-dual to $\frac{\partial}{\partial t}$ is $\left ( \frac{\partial}{\partial t}\right )^{\flat} =-N^2\left ( dt+\omega\right )=:X$. Hypersurface-orthogonality implies that $0=X\wedge dX = -n^2 dt\wedge d\omega$ so $d\omega=0$. On a simply connected domain, then $\omega$ is exact, so $dt+\omega=d(t+f)$ for some function $f$ (which, by stasis, may be taken to be $t$-independent). We can then redefine the time coordinate to absorb $f$; equivalently, we can set $\omega=0$ on a simply connected collar neighbourhood of ${\mathcal I}$ in spacetime. On this neighbourhood, the static metric becomes
\begin{equation}
\label{eq2.2}
ds^2 =-N^2 dt^2 \oplus h .
\end{equation}
In the absence of event horizons, simple connectivity of the entire spacetime follows from the null energy condition and topological censorship \cite{Galloway, GSWW1, GSWW2}, but we do not wish to assume any energy condition when we treat the static case in Subsection \ref{subsection3.2} below.

\section{Two answers to Question \ref{question1.2}}\label{section3}

\subsection{The answer assuming the null energy condition}\label{subsection3.1}
We begin by assuming that there is a 4-dimensional asymptotically anti-de Sitter spactime which obeys the null energy condition and which belongs to the conformal class of the spacetime metric. We will prove that $g$ is the AdS metric. Then it is obvious that the spacetime will be conformally isometric to anti-de Sitter space.

\begin{proposition}[Null energy condition rigidity]\label{proposition3.1}
Let $(M,g)$ be a 4-dimensional, asymptotically simple, asymptotically anti-de Sitter spacetime with conformal infinity ${\mathcal I}$ as in Definition \ref{definition2.1}. Assume that the null energy condition
\begin{equation}
\label{eq3.1}\ric(\ell,\ell)\ge 0
\end{equation}
holds for all null vectors $\ell\in T_rM$ and all $r\in M$. Assume also that for each $p\in {\mathcal I}$, every null geodesic from $p$ passes through its spacetime antipode $q\in {\mathcal I}$. Then $(M,g)$ is anti-de Sitter spacetime.
\end{proposition}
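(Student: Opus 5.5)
Our approach is to combine the focusing hypothesis with the Raychaudhuri equation for the null congruence emanating from a boundary point, and show that every term in it must vanish identically. Fix $p\in{\mathcal I}$ and consider the null hypersurface $\mathcal N_p$ generated by all null geodesics of $(M,g)$ with past idealized endpoint $p$. By asymptotic simplicity and the hypothesis, together with Lemma \ref{lemma2.2}, every generator leaves ${\mathcal I}$ at $p$, stays in $M$, and arrives at the antipode $q$ without meeting ${\mathcal I}$ in between. Thus, in the conformal metric ${\tilde g}$, $\mathcal N_p$ is a smooth null cone from $p$ refocusing at the single point $q$: topologically a family of $2$-spheres (the generators' cross sections) shrinking to points at both ends. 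The plan is to compare this cone with the corresponding cone in AdS and show that the null energy condition forbids any additional focusing.

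\textbf{Key steps.} First, I will argue that the generators have no conjugate points or caustics strictly between $p$ and $q$. If a generator had a conjugate point before $q$, it could be deformed to a timelike curve from $p$ to $q$. Every other generator reaching $q$ would then lie in the timelike future of $p$, which contradicts the fact that $q$ is reached by null geodesics that must be achronal boundary generators; this is the same kind of argument used in Lemma \ref{lemma2.2}. Second, I will write the Raychaudhuri equation for the expansion $\theta$ of the congruence in ${\tilde g}$ (or equivalently in $g$ with an affine parameter) in dimension $4$:
\[
\frac{d\theta}{d\lambda}=-\tfrac12\theta^2-\sigma^2-\ric(\ell,\ell).
\]
The cross sections of the AdS cone are round, and their areas behave exactly like those of the $(2+1)$-dimensional boundary cone on ${\mathcal I}$, so they can be computed explicitly. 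The AdS light cone from a boundary point is shear-free and has $\ric(\ell,\ell)=0$, and it refocuses precisely at affine "time" corresponding to $t=\pi$. Third, I will use the null energy condition and the shear term to obtain, by Riccati comparison, that the actual cross-sectional area radius satisfies $A^{1/2}{}''\le 0$-type concavity, with strictly earlier focusing unless $\sigma\equiv0$ and $\ric(\ell,\ell)\equiv0$ along every generator. Matching the asymptotics near $p$ and $q$ fixes the endpoints, so equality must hold.

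Fourth, with $\ric(\ell,\ell)=0$ for all null $\ell$, and then varying $p$ over ${\mathcal I}$ so that every null direction at every point is covered, I conclude that the trace-free part of $\ric$ vanishes. This gives $\ric=\Lambda g$ with $\Lambda$ a function, which is constant by the contracted Bianchi identity in dimension $4$; the asymptotic condition then fixes $\ric=-3g$. Vanishing shear for all these cones, via the Sachs equation $\dot\sigma=-\theta\sigma-C(\cdot,\ell,\ell,\cdot)$, forces $C(X,\ell,\ell,X)=0$ for all null $\ell$ and all $X\perp\ell$, and hence the Weyl tensor vanishes (an algebraic fact in dimension $4$; the paper's appendix lemmata are presumably of this kind). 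The metric is therefore Einstein and conformally flat, hence of constant curvature $-1$. Combined with asymptotic simplicity and the ${\mathbb R}\times{\mathbb S}^{2}$ conformal boundary, which forces simple connectivity of the universal cover type, this yields AdS.

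\textbf{Main obstacle.} I expect the hardest step to be the comparison in the third step, because the focusing "time" is defined only through idealized endpoints at infinity, where the affine parameter diverges in $g$. I would first try to work entirely in ${\tilde g}$, where the endpoints are at finite affine parameter. In ${\tilde g}$, however, the Ricci tensor picks up conformal terms $\tilde\ric(\ell,\ell)=\ric(\ell,\ell)-2u^{-1}\nabla^2u(\ell,\ell)$, so I would choose the defining function $u$ adapted to AdS and carry these terms along, comparing with the identical terms in the AdS model. An alternative is to use a Gauss–Bonnet-style area argument on the $2$-dimensional cross sections, and to control the behaviour near $p$ and $q$ using condition (v) of Definition \ref{definition2.1}.
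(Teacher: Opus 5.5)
Outside your third step the proposal tracks the paper's proof. That includes the absence of conjugate points because the generators lie on $\partial I^+(p)$, the use of asymptotic simplicity to reach every null direction, polarization to get $\ric\propto g$, the four-dimensional algebraic Weyl lemma, and the conclusion that Einstein plus conformally flat gives AdS. The gap is in the third step, which carries all the content, and you leave it unresolved. You set it up as a Riccati/area comparison with the AdS cone in the compactified metric $\tilde g$, and it does not close as proposed. In $\tilde g$ the null Ricci term acquires $-2u^{-1}\tilde\nabla^2u(\ell,\ell)$, which has no sign. In the interior of a general asymptotically AdS spacetime nothing makes this term agree with the corresponding AdS term, whatever defining function you choose near ${\mathcal I}$. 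Moreover, both cones refocus exactly at $q$ by hypothesis. There is no common parameter in which ``strictly earlier focusing'' could be detected, so the comparison does not yield $\sigma\equiv 0$ and $\ric(\ell,\ell)\equiv 0$.

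The missing idea is that the divergence of the $g$-affine parameter, which you list as the main obstacle, is what makes this step immediate. You should therefore stay in the physical metric. By Lemma~\ref{lemma2.2} and the total geodesy of ${\mathcal I}$, each generator meets ${\mathcal I}$ transversally at $p$ and $q$. Hence $u$ vanishes to first order in $\tilde s$ there, and $ds=u^{-2}\,d\tilde s$ makes the generator a complete null geodesic of $g$, with $s\in(-\infty,\infty)$ and no point conjugate to $p$ at any finite $s$. Your own quantity $a=A^{1/2}$ then satisfies $a''=-\tfrac12\,a\,\bigl(\sigma^2+\ric(\ell,\ell)\bigr)\le 0$ with $a>0$ on all of $\mathbb R$. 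A positive concave function on $\mathbb R$ is constant, so $\sigma\equiv 0$ and $\ric(\ell,\ell)\equiv 0$ along every generator. Equivalently, $\theta'\le-\tfrac12\theta^2$ with $\theta$ finite on all of $\mathbb R$ forces $\theta\equiv 0$. This is the ``standard argument'' the paper invokes. No comparison with AdS is needed; in fact the AdS light cone from a boundary point already has $\theta\equiv 0$ in the physical metric. With this in place of your third step, the rest of your proposal goes through as in the paper.
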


\begin{proof}
Along a null geodesic, we can define its expansion scalar $\theta$, its shear $\sigma$, and its rotation $\omega$. We will assume the reader is familiar with the details, which can be found in \cite[Chapter 4]{HE} or \cite[Chapter 12]{BEE}.

A geodesic congruence that issues from a single point cannot have nonzero rotation parameter. Then the congruence of null geodesics issuing from $p\in {\mathcal I}$ has zero rotation parameter as computed using ${\tilde g}$. This property is conformally invariant, so the congruence has zero rotation parameter in $(M,g)$.

The equations governing this geodesic congruence as it traverses through $(M,g)$ are therefore
\begin{equation}
\label{eq3.2}
\begin{split}
\frac{d\theta}{ds}=&\, -\ric(\ell,\ell)-2|\sigma|^2-\frac{1}{(n-1)}\theta^2,\\
\frac{d\sigma}{ds}=&\, -C(\cdot,\ell,\cdot,\ell) -\theta\sigma-\sigma\cdot\sigma+|\sigma|^2\id,
\end{split}
\end{equation}
where $C$ is the $(0,4)$-Weyl tensor, $s$ is an affine parameter along each geodesic (so $\ell=\frac{d}{ds}$) and $\id$ is the identity on the reduced tangent space spanned by Jacobi fields.

The reader will recall that spacetime antipodes on ${\mathcal I}$ are defined using only the causal structure of ${\mathcal I}$. But if every null geodesic \emph{in spacetime} from $p\in {\mathcal I}$ returns to ${\mathcal I}$ at the spacetime antipode $q\in {\mathcal I}$, then no causal curve in spacetime is ``faster'' (by definition of spacetime antipode on ${\mathcal I}$, no causal curve in ${\bar M}$ is faster either). Then $q\in \partial I^+(p)$ (in ${\bar M}$). Now no null geodesic lying along $ \partial I^+(p)$ can have conjugate points, and by assumption we have a congruence of null geodesics in $M$ from $p$ to $q$, so these curves cannot have conjugate points. But if $\ric(\ell,\ell)\ge 0$, a standard argument based on the first equation in \eqref{eq3.2} shows that we must have $\ric(\ell,\ell)=0$ and $\sigma=0$ all along every such null geodesic. Furthermore, from the second equation in \eqref{eq3.2}, this implies that $-C(\cdot,\ell,\cdot,\ell)=0$ all along each such null geodesic as well.

Now choose an arbitrary point $r\in M$ and fix a future-null direction $\ell_r$ at $r$. Exponentiating, we obtain a null geodesic through $r$ and by assumption this geodesic can be extended to meet ${\mathcal I}$ at two endpoints, say $p$ to the past and $q$ to the future, and also by assumption every null geodesic from $p$ ends at $q$ and $q$ is the spacetime antipode on ${\mathcal I}$ of $p$. Then by the argument above, $\ric(\ell_r,\ell_r)=0$ and $C(v_r,\ell_r,w_r,\ell_r) =0$ (for arbitrary vectors $v_r, w_r\in T_rM^{\perp}$ in the orthogonal complement of $\ell$). But since $\ell_r$ was an arbitrary null direction, we obtain by simple algebra (see Lemma \ref{lemma3.2} below) that $\ric$ and $C(v_r,\cdot,w_r,\cdot)$ must be proportional to $g$ at $r$, hence $C(v_r,\cdot,w_r,\cdot)=0$ because the Weyl tensor is traceless. Since $r$ was arbitrary, $g$ must be Einstein and locally conformally flat. Now any metric that is locally conformally flat and Einstein has constant curvature, and since the conformal infinity is timelike the cosmological constant must be negative. Hence the metric is locally anti-de Sitter, and since it's asymptotically simple it must be anti-de Sitter spacetime.
\end{proof}

The above proof rests on two lemmata. The first is an elementary polarization lemma. It holds in all dimensions $2$ and greater, so we will state (and prove) it without a dimension restriction.

\begin{lemma}[Polarization lemma]\label{lemma3.2}
Let $T$ be a symmetric $(0,2)$-tensor (at a point) such that $T(\ell,\ell)=0$ for every future-null vector (at that point). Then $T$ is proportional to $g$.
\end{lemma}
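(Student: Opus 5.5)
Fix the point and choose an orthonormal basis $e_0,e_1,\dots,e_n$ of the tangent space, with $e_0$ future-timelike, so that $g=\mathrm{diag}(-1,1,\dots,1)$. Write $T_{ab}:=T(e_a,e_b)$. Every future-null vector is a positive multiple of $\ell_u:=e_0+u$ with $u=\sum_{i\ge1}u^ie_i$ a unit spatial vector, and $T(\ell,\ell)$ is homogeneous of degree two in $\ell$. The hypothesis therefore becomes
\begin{equation*}
T_{00}+2\sum_{i}T_{0i}u^i+\sum_{i,j}T_{ij}u^iu^j=0\qquad\text{for all } u\in{\mathbb S}^{n-1}.
\end{equation*}

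The first step is to replace $u$ by $-u$ and subtract. This shows that $\sum_i T_{0i}u^i=0$ for all unit $u$, so $T_{0i}=0$ for every $i$. Adding the two equations instead gives $\sum_{i,j}T_{ij}u^iu^j=-T_{00}$ on the unit sphere.

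Next, homogeneity extends this to $\sum_{i,j}T_{ij}v^iv^j=-T_{00}\,|v|^2$ for all spatial $v$. Polarizing this quadratic form (apply it to $v+w$, $v$ and $w$, and use the symmetry of $T$) gives $T_{ij}=-T_{00}\delta_{ij}$. Setting $\lambda:=-T_{00}$, we have $T_{00}=-\lambda=\lambda g_{00}$, $T_{0i}=0=\lambda g_{0i}$ and $T_{ij}=\lambda\delta_{ij}=\lambda g_{ij}$. Hence $T=\lambda g$, where $\lambda=-T(e_0,e_0)$.

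The only point needing care is dimension. If $n\ge2$, i.e.\ spacetime dimension at least $3$, the sphere ${\mathbb S}^{n-1}$ has enough points for the argument to go through as written. If the spacetime dimension is $2$, then $u=\pm e_1$ are the only choices. The same argument still works: the difference gives $T_{01}=0$ and the sum gives $T_{11}=-T_{00}$, so no separate case is needed and the lemma holds in every dimension $\ge2$. Because $T(\ell,\ell)$ is quadratic in $\ell$, it is also irrelevant that the hypothesis concerns only future-null vectors. I do not expect a genuine obstacle; the main step is the odd/even split under $u\mapsto -u$.
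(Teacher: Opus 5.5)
Your proof is correct and is essentially the paper's argument. The paper also fixes an orthonormal frame and uses the null vectors $e_0\pm e_1$ to obtain $T(e_0,e_i)=0$ and $T(e_i,e_i)=-T(e_0,e_0)$, then uses $e_0+\frac{1}{\sqrt2}(e_1+e_2)$, which is exactly your polarization step with $v=e_1+e_2$, to kill the off-diagonal spatial components; your odd/even split over all of $\mathbb{S}^{n-1}$ simply handles these finitely many choices of null vector at once.
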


We prove this lemma in Appendix \ref{appendixB}. It applies above to show that $\ric$ is proportional to the spacetime metric. If we knew that $C(\ell,u,\ell,v)=0$ for arbitrary vectors $u,v\in T_pM$, we could apply the lemma to the $(0,2)$-tensors $C(\cdot,u,\cdot,v)$ as well. However, we only know that $C(\ell,u,\ell,v)=0$ for $u,v\in T_p^{\perp}M$, the orthogonal complement of $\ell$ at each $p$. Nonetheless, this suffices, as the follow lemma shows.

\begin{lemma}\label{lemma3.3}
Let $(V, g)$ be an oriented Lorentzian vector space of dimension $4$, and
let $C$ be an algebraic Weyl tensor on $V$. For every null vector $\ell$,
$C(X,\cdot,Y,\cdot) = 0$ for all $X,Y \in \ell^{\perp}$ (the orthogonal complement of $\ell$).
Then $C = 0$.
\end{lemma}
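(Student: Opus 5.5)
The plan is to pass to the Newman--Penrose (spinor) description of the Weyl tensor in dimension $4$. There the hypothesis says that every null direction is a principal null direction (indeed a repeated one). A nonzero Weyl tensor has at most four principal null directions, so the hypothesis will force $C=0$.

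\textbf{Setting up the frame.} Fix a null tetrad $(\ell,n,m,\bar m)$ in $V\otimes\mathbb{C}$ with $g(\ell,n)=-1$, $g(m,\bar m)=1$, and all other products zero. Then $\ell^\perp\otimes\mathbb{C}$ is spanned by $\ell,m,\bar m$.
\begin{itemize}
\item Extend the hypothesis complex-bilinearly in $X,Y$. It then gives $C(X,\ell,Y,\ell)=0$ for all complex $X,Y\in\ell^\perp\otimes\mathbb{C}$.
\item In particular $\Psi_0:=C(\ell,m,\ell,m)=0$, reading the hypothesis with $X=Y=m$ and $\ell$ in the second and fourth slots.
\item It also gives $\Psi_1:=C(\ell,n,\ell,m)=0$. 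To see this, use tracelessness of $C$, $g^{ab}C_{a\ell b m}=0$. Expanding the trace in the tetrad expresses $C(n,\ell,\ell,m)$, up to sign, through $C(m,\ell,\bar m,m)$, $C(\bar m,\ell,m,m)$ and $C(\ell,\ell,\cdot,\cdot)=0$. By the pair symmetry $C(X,Y,Z,W)=C(Z,W,X,Y)$ these equal $C(\bar m,m,m,\ell)$, which is of the form $C(X,\ell,Y,\ell)$ with $X,Y\in\ell^\perp\otimes\mathbb{C}$ after rearranging. This bookkeeping must be checked, though it is not actually needed for the argument below.
\end{itemize}

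\textbf{Null rotations about $n$.} For $z\in\mathbb{C}$ set
\[
\ell_z=\ell+\bar z\,m+z\,\bar m+|z|^2 n,\qquad m_z=m+z\,n,
\]
with $n$ fixed. This is again a null tetrad, and $\ell_z$ is a real null vector. As $z$ ranges over $\mathbb{C}$, $\ell_z$ sweeps out every null direction except that of $n$. The standard transformation law, obtained by multilinear expansion together with the Weyl symmetries, is
\[
\Psi_0(z)=\Psi_0+4z\Psi_1+6z^2\Psi_2+4z^3\Psi_3+z^4\Psi_4 ,
\]
where $\Psi_0,\dots,\Psi_4$ are the five complex Weyl scalars of the original tetrad, up to convention-dependent signs and conjugations (possibly with $\bar z$ in place of $z$).

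Apply the hypothesis to the null vector $\ell_z$ with $X=Y=m_z\in\ell_z^\perp\otimes\mathbb{C}$. This gives $\Psi_0(z)=0$ for every $z\in\mathbb{C}$. A complex polynomial in $z$ vanishing identically has all coefficients zero, so $\Psi_0=\Psi_1=\Psi_2=\Psi_3=\Psi_4=0$.

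\textbf{Concluding $C=0$.} The final step is the standard fact that in dimension $4$ these five scalars determine the algebraic Weyl tensor completely: the space of algebraic Weyl tensors has real dimension $10$, matching the $5$ complex scalars. Equivalently, they are the components of the anti-self-dual part $C+i{*C}$, and $C$ is the real part of that complex tensor. This is where the dimension-$4$ hypothesis and the orientation enter, since ${*}$ requires an orientation.

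\textbf{Main obstacle.} The main technical work is establishing the transformation formula for $\Psi_0(z)$ in a fixed sign convention, and showing that the vanishing of all the $\Psi_i$ forces every tetrad component of $C$ to vanish. I expect the cleanest route is a direct check: list the $10$ independent tetrad components, use the tracefree identities to reduce them to $\Psi_0,\dots,\Psi_4$ (for example $C(\ell,n,\ell,n)=-\Psi_2-\bar\Psi_2$ up to convention), and confirm that each one vanishes. A fully real alternative would use the polynomial $\ell\mapsto C(X,\ell,Y,\ell)$ on the null cone; I would fall back on it only if the complex bookkeeping becomes unwieldy.
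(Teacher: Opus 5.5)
Your proposal is correct and follows essentially the paper's proof: null rotations $\ell\mapsto\ell+\bar z m+z\bar m+|z|^2 n$, $m\mapsto m+zn$ about the fixed null vector $n$ (the paper's $k$) turn the hypothesis into the identical vanishing of the quartic $\Psi_0+4z\Psi_1+6z^2\Psi_2+4z^3\Psi_3+z^4\Psi_4$, so all five Weyl scalars vanish and hence $C=0$. One caution: your side bullet claiming $\Psi_1=0$ at a single $\ell$ (and the ``repeated'' remark) is false, because the hypothesis at one null $\ell$ is equivalent to $\Psi_0=0$ alone and $C(\bar m,m,m,\ell)$ contains $\ell$ only once; as you note, however, the main argument never uses that bullet.
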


By an algebraic Weyl tensor on a vector space with Lorentzian inner product, we mean a $(0,4)$-tensor $C$ such that $C_{abcd}=-c_{bacd}=-C_{abdc}$, $C_{abcd}=C_{cdab}$, $C_{abcd}=C_{bcad}=C_{cabd}$, and $C$ is tracefree in the sense that $g^{ac}C_{abcd}=0$.

The proof of Lemma \ref{lemma3.3}, like that of Lemma \ref{lemma3.2}, is purely algebraic and is given in Appendix \ref{appendixB}. But, simply put, $\ell$ will be a principal null direction of the Weyl tensor and there can be at most four distinct such directions at each point (indeed, by Goldberg-Sachs, $\ell$ will be a \emph{repeated} principal null direction, and there are at most two of those) unless the Weyl tensor vanishes. Thus, if every null direction is a principal null direction, the Weyl tensor must vanish.

\begin{remark}\label{remark3.4} Proposition \ref{proposition3.1} also holds if spacetime is 3-dimensional. In that case, $C$ and $\sigma$ vanish identically so Lemma \ref{lemma3.3} is not needed, while Lemma \ref{lemma3.2} is still required.
\end{remark}

\subsection{The answer assuming stasis}\label{subsection3.2}
In this subsection we do not assume the null energy condition, nor do we assume a dimensional restriction. Instead we assume that $(M,g)$ admits a complete, hypersurface-orthogonal, timelike (nonvanishing) Killing vector field which we denote by $\frac{\partial}{\partial t}$.

\begin{proposition}[Static rigidity]\label{proposition3.5}
Let $(M,g)$ be a conformally compactifiable, globally static spacetime. Assume that for each $p\in {\mathcal I}$, every null geodesic from $p$ passes through its spacetime antipode $q\in {\mathcal I}$. Then $(M,g)$ is conformally isometric to anti-de Sitter spacetime.
\end{proposition}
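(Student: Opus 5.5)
Since the hypothesis and the conclusion both concern only the conformal class of $g$ (null geodesics up to reparametrization and the causal structure), we are free to rescale. We pass to the conformally related metric $d\tilde s^2 = ds^2/N^2$, which by \eqref{eq2.2} takes the ultrastatic form
\[
d\tilde s^2 = -dt^2 \oplus \tilde h
\]
on a simply connected collar, and globally if $M$ is simply connected. Here $\tilde h = h/N^2$ is the optical metric of Definition \ref{definition2.5}. By Remark \ref{remark2.6}(ii), this metric extends to $\mathcal I$ and induces $-dt^2\oplus \hat h$ there, so $\hat h$ is round. By Remark \ref{remark2.7}, $\partial\Sigma\cong\mathbb S^{n-1}$ is totally geodesic for $\tilde h$, and $(\Sigma,\tilde h)$ is a compact Riemannian manifold with boundary.

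For an ultrastatic metric, a null geodesic is $s\mapsto(t_0+s,c(s))$ with $c$ a unit-speed $\tilde h$-geodesic; this is the static case of the appendix result. The hypothesis then translates into a statement about $(\Sigma,\tilde h)$. Place $p=(0,x)$ with $x\in\partial\Sigma$; its spacetime antipode is $q=(\pi,-x)$, where $-x$ is the antipode in the round boundary sphere. Every $\tilde h$-geodesic leaving $x$ into the interior must therefore reach $\partial\Sigma$ exactly at $-x$, after length exactly $\pi$. By Lemma \ref{lemma2.2}, it does not touch $\partial\Sigma$ before then. Conversely, boundary geodesics of the round sphere from $x$ also have length $\pi$ to $-x$.

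It remains to show that $(\Sigma,\tilde h)$ is isometric to a round hemisphere. Once that is done, $-dt^2\oplus \tilde h$ is the Einstein static universe restricted to a half, which is conformal to AdS, and we are finished. A route to this uses the double of $\Sigma$ across its totally geodesic boundary. Because the boundary is totally geodesic, the doubled metric is $C^{1,1}$ and geodesics reflect smoothly. In the double, every geodesic from a boundary point $x$ returns to $-x$ at distance $\pi$ and then to $x$ at distance $2\pi$. More directly, I would show that $x$ and $-x$ are joined by a whole $(n-1)$-parameter family of minimizing geodesics of length $\pi$. These are minimizing because no causal curve reaches $q$ faster, which gives $d_{\tilde h}(x,-x)\ge\pi$, and every geodesic achieves $\pi$. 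Hence $\exp_x$ maps the inward half-sphere of radius $\pi$ onto $\Sigma$, with $-x$ a conjugate point of maximal multiplicity and no earlier conjugate points. At this point I would invoke a Blaschke/Wiedersehen-type rigidity, or more elementarily a volume comparison. Santaló's formula for boundary-to-boundary geodesics, applied with constant exit length $\pi$, computes $\mathrm{vol}(\Sigma)$ as exactly the volume of the round hemisphere. A Croke-type sharp inequality (the hemisphere case of Croke's isoperimetric or filling inequality) then forces $\tilde h$ to be the round hemisphere metric.

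The main obstacle is this last rigidity step, namely identifying $(\Sigma,\tilde h)$ as a hemisphere from the fact that all geodesics from any boundary point have length $\pi$ and end at the antipode. I would try Santaló plus Croke's equality case first, since the data is exactly that the boundary distance function matches the hemisphere's. In fact $d(x,y)$ for boundary points is then determined: a first-variation argument on the boundary gives $d=d_{\mathbb S}$. Boundary rigidity of the hemisphere, with Croke's equality case, would then conclude. A smaller technical point is removing the simple-connectivity assumption used to obtain \eqref{eq2.2} globally. To handle it, I would either lift to the universal cover, where the hypothesis persists, or note that the argument only needs the quotient $\Sigma$ and closedness of $d\omega$.
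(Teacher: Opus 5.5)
Your reduction is the same as the paper's. You pass to $-dt^2\oplus\tilde h$, identify null geodesics from $\mathcal I$ with unit-speed $\tilde h$-chords, and conclude that every chord from $x\in\partial\Sigma$ ends at $-x$ after length $\pi$.

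The routes differ at the last step. The paper finishes in one line by citing Bangert's theorem, whose hypothesis is exactly that every boundary-to-boundary geodesic of a compact manifold with round, totally geodesic boundary has length $\pi$. Your alternative, Santal\'o plus a sharp Croke inequality, can be completed and rests on better-known results, but two steps are missing.

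\emph{Non-trapping.} Santal\'o's formula with $\tau\equiv\pi$ only computes the Liouville volume of the states lying on chords. Without non-trapping you get only $\mathrm{vol}(\Sigma)\ge\mathrm{vol}(\mathbb S^n_+)$, which is the wrong direction for an equality argument. Non-trapping does follow from your data by the open--closed argument of Lemma~\ref{lemma4.13} with $\omega=0$: exit times are bounded by $\pi$, and glancing limits stay in $\partial\Sigma$ (Lemma~\ref{lemma4.6}).

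\emph{Which Croke inequality.} The filling version (volume at least that of $\mathbb S^n_+$ whenever $d\ge d_{\mathbb S^n_+}$ on the boundary) is, in dimension two, Gromov's filling area conjecture, which is open in general. What works is Croke's isoperimetric inequality $\mathrm{vol}(\partial\Sigma)\ge c_n\,\mathrm{vol}(\Sigma)^{(n-1)/n}$. Here $c_n$ is the round-hemisphere constant, and equality holds only for the round hemisphere. It requires visibility angle $1$: every geodesic from an interior point must minimize up to the boundary. This holds in your setting, because given non-trapping every such segment is a subsegment of a chord from some $x$ to $-x$ of length $\pi=d_{\tilde h}(x,-x)$. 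That last equality is easiest seen directly: a shortest path from $x$ to $-x$ is either a boundary great-circle arc or a chord. Now $\mathrm{vol}(\partial\Sigma)=\mathrm{vol}(\mathbb S^{n-1})$ and Santal\'o gives $\mathrm{vol}(\Sigma)=\frac12\mathrm{vol}(\mathbb S^n)$, so equality holds and $\Sigma$ is the hemisphere.

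\emph{Non-simply-connected case.} Your remark here does not work as stated. The universal cover of $\Sigma$ has one boundary sphere for each element of $\pi_1(\Sigma)$ and is noncompact if $\pi_1$ is infinite. A lifted chord can also end on a different boundary component, so neither rigidity theorem applies there. Closedness of $\omega$ only guarantees that projected null geodesics are $\tilde h$-geodesics. The travel time is $L_{\tilde h}(\gamma)-\int_\gamma\omega$, so the hypothesis gives $L_{\tilde h}(\gamma)=\pi+\int_\gamma\omega$, not $\pi$.

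The fix is short. Since $d\omega=0$, the reversed curve $\bar\gamma$ is also a chord, from $-x$ to $x$. It lifts to a future null geodesic with travel time $L_{\tilde h}(\gamma)+\int_\gamma\omega$, and the hypothesis at $-x$ makes this equal to $\pi$. Subtracting gives $\int_\gamma\omega=0$ and $L_{\tilde h}(\gamma)=\pi$ for every chord. Your argument (or Bangert's theorem) then applies, and in particular forces $\Sigma$ to be simply connected. This is cleaner than the paper's tubular-neighbourhood argument for that case.
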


\begin{proof}
On the asymptotic end ($\Sigma$ outside a compact set), we have $\omega=0$. If $\Sigma$ is simply connected (or if $\omega$ is analytic), we can take $\omega=0$ everywhere and then $g=-N^2dt^2+h$. We will deal with this case first.

We identify spacetime points along integral curves of the static Killing vector field. This defines a map $\varphi:M\to \Sigma$ from spacetime $M$ to the quotient manifold $\Sigma$. We can regard $h$ as the pullback of a Riemannian metric on $\Sigma$. If we restrict $h$ and $\varphi$ to any constant-$t$ hypersurface in $M$, then $\varphi$ is an isometry, so we will use $h$ to denote both the Riemannian metric on $\Sigma$ and its pullback (which has kernel spanned by $\frac{\partial}{\partial t}$) to $M$. Under $\varphi$, it is well-known that null geodesics on $M$ project to curves on $\Sigma$ that are in fact constant-speed geodesics of the Fermat metric ${\tilde h}=h/N^2$; see the Appendix and set $\omega=0$ in the calculation there. Also, the converse holds: every geodesic in $(\Sigma,{\tilde h})$ lifts along integral curves of $\frac{\partial}{\partial t}$ to a null geodesic in $(M,g)$.

The map $\varphi$ extends to ${\mathcal I}$, whose image points will then be points of $\partial \Sigma$.

Now future-null geodesics in the spacetime obey
\begin{equation}
\label{eq3.3}
dt(\ell)=\sqrt{h(\ell,\ell)/N^2}=\sqrt{{\tilde h}(\ell,\ell)}
\end{equation}
along each future-null geodesic $\gamma$ with tangent field $\ell$ (in this expression ${\tilde h}$ is actually the pullback of ${\tilde h}$ from $\Sigma$ to $M$), and we may integrate along $\gamma$ from $p\in {\mathcal I}$ to its spacetime antipode $q\in {\mathcal I}$ to obtain
\begin{equation}
\label{eq3.4}
\begin{split}
T_q-T_p=&\, \int_{\gamma} \sqrt{{\tilde h}(\ell,\ell)}\\
=&\,  L_{\tilde h}(\gamma) .
\end{split}
\end{equation}
We used Lemma \ref{lemma2.2} here to guarantee that the endpoints on ${\mathcal I}$ are in fact spacetime antipodes. Here $T_r$ is the $t$-coordinate of a point $r\in {\mathcal I}$. Let ${\tilde p}$ denote the image point of $p$ in $\partial \Sigma$ under the identification above, and define ${\tilde q}$ likewise. Since all null geodesics from $p$ arrive at its spacetime antipode $q$, \eqref{eq3.4} says that every geodesic in $\Sigma$ from ${\tilde p}$ to ${\tilde q}$ has the same ${\tilde g}$-arclength.

Now the difference $T_q-T_p$ is the difference in the time coordinates of spacetime antipodes. Hence we can evaluate the integral on the right in \eqref{eq3.4} on a null geodesic along ${\mathcal I}$. Then $h=h_0=g({\mathbb S}^{n-1},{\rm can})$ and so $T_q-T_p=\pi$. In particular, it is independent of $p$. Hence every geodesic from $\partial \Sigma$ to $\partial \Sigma$ has ${\tilde g}$-arclength $\pi$. But by the main theorem of \cite{Bangert}, then $(\Sigma,{\tilde h})$ is isometric to a hemisphere ${\mathbb H}^n$ of a round $n$-sphere. Hence $-dt^2+{\tilde h}=-dt^2+h/N^2$ is the Einstein static universe metric on ${\mathbb R}\times {\mathbb H}^n$. It is well-known that there is a global conformal isometry mapping anti-de Sitter spacetime to this domain in the Einstein static universe.

Finally, we deal with the case where $\Sigma$ is not simply connected. By assumption, $\Sigma$ has a conformal infinity which is a round sphere, so there is a simply connected collar of conformal infinity ${\mathcal N}\subset \Sigma$. Arguing as above, we can set $\omega=df=0$ in the collar.

Now consider a geodesic $\Gamma$ which begins at point $p\in{\mathcal N}$ and ends at $q\in{\mathcal N}$ but which is not homotopic to a curve in ${\mathcal N}$. We can define a tubular neighbourhood ${\mathcal T}\in\Sigma$ such that $\Gamma\subset{\mathcal T}$. Then ${\mathcal N}\cap{\mathcal T}$ is open and may have two (or more) disjoint connected components, say ${\mathcal N}^-$ whose closure contains $p$ and ${\mathcal N}^+$ whose closure contains $q$. We can write that $\omega=dF$ on ${\mathcal T}$. Since $\omega=0$ on ${\mathcal N}$, then $F$ must be constant on each of ${\mathcal N}^-$ and ${\mathcal N}^+$. If $F$ were to equal the same constant on both regions, say $C$, we could simply redefine $\omega\vert_{\mathcal T}=d(F-C)$ and then extend $f$ to all of ${\mathcal N}\cup{\mathcal T}$. Then we would have $\omega=df$, on all of ${\mathcal N}\cup{\mathcal T}$. Continuing in this manner until $\Sigma$ is covered, we would obtain $\omega=df$ globally. But then, by a previous argument, we can set $\omega=0$ globally and apply the above proof.

But if $F$ is given by distinct constants on these two regions, say by $F=C^-$ on ${\mathcal N}^-$ and by $F=C^+\neq C^-$ on ${\mathcal N}^+$, then we can integrate equation \eqref{eqA.1} along $\Gamma$ and use the fundamental theorem for line integrals. Then we obtain
\begin{equation}
\label{eq3.5}
L_{\tilde h}(\Gamma) = T_q-T_p + \int_{\Gamma}\omega = T_q-T_p +C^+-C^-\neq T_q-T_p,
\end{equation}
where the inequality follows since $C^+\neq C^-$. But for a geodesic $\gamma$ that joins $p$ to $q$ and lies within the neighbourhood ${\mathcal N}$, we have
\begin{equation}
\label{eq3.6}
L_{\tilde h}(\gamma) = T_q-T_p \neq L_{\tilde h}(\Gamma).
\end{equation}
Then by the main theorem of \cite{Bangert}, $(\Sigma,{\tilde h})$ cannot be a round hemisphere. Then the spacetime $(M,g)$ is not anti-de Sitter.
\end{proof}

For a geodesic $\gamma$ with endpoints $p$ and $q$ on the boundary of $\Sigma$, the quantity
\begin{equation}
\label{eq3.7}
A(\gamma):=L_{\tilde h}(\gamma)-\int_{\gamma} \omega \equiv \int_{\gamma} \left( |\dot{\gamma}|_g - \omega(\dot{\gamma}) \right) dt
\end{equation}
is known as the boundary case of \emph{Ma\~n\'e's action potential} (see for example \cite{DPSU}). Extremizing curves are known as \emph{magnetic pre-geodesics}. Constant speed extremizers are \emph{magnetic geodesics}. The study of magnetic geodesics will allow us to generalize from the static to the stationary case in the next section.

\section{The stationary case} \label{section4}
\setcounter{equation}{0}

\subsection{Magnetic hemispheres}\label{section4.1}Let $M$ be a compact connected $n$-manifold ($n\geq 2$) with $\partial M \cong {\mathbb S}^{n-1}$ (e.g. $M$ is a closed hemisphere of ${\mathbb S}^{n}$).
Let $g$ be a Riemannian metric on $M$ such that the boundary is totally geodesic and the metric on $\partial M$ agrees with the standard round metric. Let $\Omega \in \Omega^2(M)$ be a 2-form such that
\begin{itemize}
\item [i)] $\Omega=d\omega$ and
\item [ii)] the pullback of $\omega$ to $\partial M$ vanishes.
\end{itemize}
The pair $(g,\Omega)$ defines a magnetic flow on the unit tangent bundle $SM$ via the ODE
\begin{equation}
\label{eq4.1}
\nabla_{\dot{\gamma}} \dot{\gamma} = Y(\dot{\gamma}),
\end{equation}
where $Y$ is defined by
\begin{equation}
\label{eq4.2}
\langle Y(X), Z \rangle_g = \Omega(X, Z)=d\omega(X,Z).
\end{equation}
The magnetic geodesic $\gamma_{x,v}$ is the unique solution to this ODE with initial condition $(x,v)\in SM$.

Given a boundary point $x\in \partial M$,  we let $S_x^+ M$ denote the open hemisphere of inward-pointing unit tangent vectors at $x$. We are interested in rigidity results under the following assumption: for any $x\in \partial M$ and $v\in S_x^+ M$, the magnetic geodesic $\gamma_{x,v}$ returns to the boundary $\partial M$ at an ``exit point'' $y(v)$ with constant Ma\~n\'e action $A(\gamma_{x,v})$.
\begin{equation}\label{eq:A=pi}
    A(\gamma_{x,v}) =\int_{\gamma_{x,v}} \left( |\dot{\gamma}_{x,v}|_g - \omega(\dot{\gamma}_{x,v}) \right) dt = \pi.
\end{equation}

We note some terminology used for magnetic geodesics that may be unfamiliar to general relativists. A magnetic geodesic (in the interior) joining two boundary points is a \emph{magnetic chord}. The initial data $(x,v)\in SM$, $x\in \partial M$, for a magnetic chord will also be called the \emph{entry data} or \emph{entry state}. When the magnetic chord returns to the boundary after travelling through the bulk, the point of return is the \emph{exit point} and the corresponding element of $SM$ is the \emph{exit data} or \emph{exit state}. Entry states are labelled with a `$+$' sign and exit states with a `$-$' sign. For $x \in \partial M$, a vector $v\in T_x \partial M\subset T_xM$ will be called ``glancing'', as will any trajectory through $x$ whose tangent vector at $x$ is parallel to a glancing vector $v$.

\subsection{The rigidity result}\label{section4.2}
The Lorentzian problem that we seek to address assumes that all null geodesics from $p\in{\mathcal I}$ refocus at some $q\in{\mathcal I}$ which we call the spacetime antipode on ${\mathcal I}$ of $p$, and from this it follows both that the related magnetic geodesics in the optical manifold refocus join antipodes (in the usual sense, not the spacetime sense) on the boundary and that the Ma\~n\'e action of every inextendible magnetic geodesic is $\pi$. In what follows we will solve a different problem, which is a problem concerning magnetic geodesics and whose statement is more general than is needed for the Lorentzian problem. We will only assume that all magnetic chords have Ma\~n\'e action $\pi$; in particular we do not need to assume the magnetic flow to be non-trapping nor do we need to assume that the magnetic chords connect antipodes. Rather, this will follow from the other assumptions.

Our main result here is the following magnetic analogue of the result of Bangert \cite{Bangert} used in the previous section.

\begin{theorem} \label{theorem4.1}
Let $(M,g)$ be a compact connected Riemannian manifold-with-boundary, whose boundary is a round $(n-1)$-sphere (i.e., $g$ restricts to the canonical metric on $\partial M\simeq {\mathbb S}^{n-1}$). Suppose that
\begin{itemize}
\item [i)] the inclusion $\iota:\partial M\to M$ embeds $\partial M$ as a totally geodesic hypersurface in $M$,
\item [ii)] there is a 1-form $\omega$ defined on $M$ whose pullback obeys $\iota^*\omega=0$,
\item [iii)] for each magnetic chord $\gamma$ solving \eqref{eq4.1} and \eqref{eq4.2} (with $\omega$ as in (i)), the Ma\~n\'e action obeys $A(\gamma)=\pi$, and
\item [iv)] $\max_{x\in M}|\omega_{x}|_{g}<1$.
\end{itemize}
Then $d\omega =: \Omega \equiv 0$.\label{thm:MagBangert}
\end{theorem}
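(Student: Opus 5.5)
Here is the plan. We want to show that constant Mañé action of all magnetic chords forces $\Omega=0$. I would adapt Bangert's proof for the Riemannian case, which uses Santaló's formula and a volume comparison, to the magnetic setting via the first variation of the action and Liouville measure.

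\emph{Step 1: boundary behaviour and the exit map.} Consider the action $A(\gamma_{x,v})$ as a function of the entry state $(x,v)\in\partial_+SM$. The first variation formula for the Mañé action of a magnetic geodesic with free endpoints on $\partial M$ gives
\[
dA = \big(\dot\gamma^\flat-\omega\big)\big|_{\text{exit}} - \big(\dot\gamma^\flat-\omega\big)\big|_{\text{entry}}
\]
paired with the boundary variations of the endpoints. The reason is that magnetic geodesics are critical points of $A$ with fixed endpoints. Since $\iota^*\omega=0$, only the tangential components of $\dot\gamma$ at the endpoints contribute. Constancy of $A$ then says that the exit-map data are tied to the entry data by a symplectic-type relation. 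Differentiating near glancing directions, where $\partial M$ is totally geodesic and round, should show that short chords approximate boundary great-circle arcs. Chords entering nearly tangentially then run nearly along great semicircles of the boundary. The constant $\pi$ should therefore be consistent with chords exiting at antipodes, where $\omega$ contributes nothing along $\partial M$.

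\emph{Step 2: Santaló-type volume identity.} Hypothesis (iv) ensures the Lagrangian $|v|-\omega(v)$ is positive, so the action is a genuine (Finsler/Randers-type) length. I would apply the magnetic Santaló formula, since magnetic flow preserves Liouville measure, to write
\[
\int_{SM}\tau\, d\mu \;\ge\; \mathrm{Vol}(SM)\quad\text{or rather}\quad \mathrm{Vol}(SM)=\int_{\partial_+SM}\tau(x,v)\,\langle v,\nu\rangle\, d\sigma ,
\]
where $\tau$ is the exit time. Non-trapping is not assumed, but the trapped set has measure zero: finite total action $\pi$ bounds the length, since $\tau\le \pi/(1-\max|\omega|)$. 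Writing $\tau=A+\int_\gamma\omega$ and integrating over $\partial_+SM$ gives two terms. The first is $\pi$ times a boundary constant depending only on the round $\mathbb S^{n-1}$. For the second term, the flux of $\omega$ integrates to the Liouville integral $\int_{SM}\omega(v)\,d\mu=0$ by symmetry $v\mapsto -v$ (after reversing trajectories, using that $\iota^*\omega=0$). Hence $\mathrm{Vol}(M)$ equals the volume of the round hemisphere.

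\emph{Step 3: rigidity, the main obstacle.} We must extract $\Omega=0$ from the equality case. Bangert's argument combines the volume identity with a Berger–Kazdan/Croke-type inequality: the average of chord lengths squared is controlled against volume, with equality only for the hemisphere. I would apply such an inequality to the reversed flow. The key point is that the magnetic flow of $(g,\Omega)$ reverses to that of $(g,-\Omega)$, and the actions satisfy $A_\omega(\gamma)+A_{-\omega}(\bar\gamma)$ symmetric, so that averaging the two gives honest Riemannian lengths. Alternatively, I would use a Croke-type inequality via Cauchy–Schwarz on $\int\tau^2$, combined with Step 2, to force $\tau\equiv\pi$ and then $\int_\gamma\omega=0$ along every chord. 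Then $\omega$ has vanishing integrals over all chords, and a magnetic ray-transform injectivity argument shows $d\omega=0$. That argument differentiates in the entry data and uses first variation with the curvature term $\Omega(J,\dot\gamma)$ and the Jacobi fields.

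I expect the hardest step to be this last one: turning equality in the volume/Cauchy–Schwarz inequality into pointwise vanishing of $\Omega$ without a non-trapping or simplicity hypothesis.
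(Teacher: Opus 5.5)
Your proposal has a genuine gap in Step~3, and it is the central step. Santal\'o's formula plus the oddness of $\omega$ gives only an \emph{averaged} identity, $\int_{\partial_+SM}\big(\int_\gamma\omega\big)\,d\mu=0$, equivalently $\int_{\partial_+SM}(\tau-\pi)\,d\mu=0$ (your volume equality). Nothing in your plan turns this into information about individual chords, and none of the three upgrades you suggest works here:
\begin{itemize}
\item Cauchy--Schwarz gives $\int\tau^2\,d\mu\ge\pi^2\int d\mu$. That is the wrong direction, and you have no matching upper bound.
\item Berger--Kazdan/Croke-type inequalities are not standard for magnetic flows, and they require control of conjugate points along chords, which nothing here provides.
\item Injectivity of the magnetic ray transform is available only under simplicity-type hypotheses (strictly convex boundary, no conjugate points). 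Here $\partial M$ is totally geodesic, and already in the model hemisphere every chord from $x$ refocuses at $-x$.
\end{itemize}
The ``averaging'' remark does not help either: $A_{-\omega}(\bar\gamma)=A_\omega(\gamma)$ identically, so it produces no Riemannian lengths.

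There is a secondary gap in Step~2. The bound $\tau\le\pi/(1-\max|\omega|)$ constrains only orbits that actually exit, so it does not show that the trapped set is null. The paper obtains non-trapping from an open--closed argument on the set of chord states in $SM^\circ$. That argument also needs $\Omega|_{\partial M}=0$, proved by a short-chord argument, and the fact that glancing orbits stay in $\partial M$. Non-trapping matters because the magnetic trapped set is not invariant under $v\mapsto -v$. Without it, the zero-average identity can fail.

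The missing idea is a \emph{pointwise} sign for $\int_\gamma\omega$, which the paper obtains in two steps you do not have.
\begin{itemize}
\item \emph{Antipodal exit.} The paper proves that the exit point is always $-x$ and that $\mathcal S_{g,\Omega}=\mathcal S_{\rm can}$. With the entry point fixed, the first variation gives $\Phi_x^*\lambda=0$, so the exit data form a conical Lagrangian in $B^*\partial M$. Near-glancing chords have exit times tending to $\pi$ and converge to great semicircles of $\partial M$, so the boundary of that cone is $S^*_{-x}\partial M$, which forces $y(v)\equiv -x$. Your Step~1 only gestures at this. In particular, the system is boundary-reversible.
\item \emph{Minimization.} Hypothesis (iv) places the energy level above Ma\~n\'e's critical value. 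So free-time minimizers between boundary points exist, and constancy of the action shows that every chord is a global minimizer.
\end{itemize}
Now let $\sigma$ be the chord starting at $-\mathcal S(x,v)$, which exits at $(x,-v)$. Its reversal $\bar\sigma$ competes with $\gamma$, giving $\pi\le A_\omega(\bar\sigma)=\pi+2\int_\sigma\omega$, i.e.\ $\int_\sigma\omega\ge0$ for every chord. Only at this point do Santal\'o and oddness force $\int_\gamma\omega\equiv0$. Then $\bar\sigma$ is also a minimizer for $(g,\omega)$, so it solves both the $(g,\omega)$ and $(g,-\omega)$ magnetic equations. This gives $Y=-Y$ along it, and since chords fill $SM$, $\Omega\equiv0$. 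No ray-transform injectivity is needed.
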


\begin{remark}\label{remark4.2}
The bound on $\max_{x\in M}|\omega_{x}|_{g}$ (condition (iv) above) can be removed if instead the magnetic flow defined by \eqref{eq4.1} is assumed to be non-trapping and the Riemannian metric $g$ satisfies $d_{g}(x,-x)=\pi$ for each antipodal pair $x, -x\in \partial M$.
\end{remark}

This theorem is more general than is needed to address Question \ref{question1.2} for stationary asymptotically AdS spacetimes. In the Lorentzian setting, we assume that null geodesic
atoms
join spacetime antipodes on ${\mathcal I}$, from which it will follow that magnetic chords in the optical manifold $\Sigma$ always join antipodes on $\partial \Sigma$. From this fact we easily deduce the constancy of the Ma\~n\'e action. But Theorem \ref{theorem4.1} holds whenever the Ma\~n\'e action for each chord is the same, even if it were not assumed that the chord joined antipodes.

Our main results follow as corollaries.
\begin{corollary}\label{corollary4.3}
When the assumptions of Theorem \ref{theorem4.1} hold, $g$ is isometric to the standard unit hemisphere.
\end{corollary}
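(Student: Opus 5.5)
Since Theorem \ref{theorem4.1} gives $\Omega=d\omega\equiv 0$, the magnetic flow \eqref{eq4.1}--\eqref{eq4.2} is the ordinary geodesic flow of $g$, because $Y\equiv 0$. Magnetic chords are therefore just $g$-geodesic chords. The plan is to show that every such chord has $g$-length $\pi$, and then to invoke the main theorem of \cite{Bangert}, exactly as in the proof of Proposition \ref{proposition3.5}. That theorem says a compact Riemannian manifold whose boundary is a totally geodesic round $(n-1)$-sphere, and all of whose geodesic chords have length $\pi$, is isometric to the round unit hemisphere.

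\emph{Step 1: removing the $\omega$-term.} Since $d\omega=0$, the form $\omega$ is closed, and hypothesis (ii) gives $\iota^*\omega=0$. Hence for any chord $\gamma$ from $x$ to $y$ in $\partial M$, the integral $\int_\gamma\omega$ depends only on the relative homology class of $\gamma$ in $H_1(M,\partial M)$. Concretely, close $\gamma$ up with a path $\beta$ in $\partial M$ from $y$ to $x$; then $\int_\beta\omega=0$, so $\int_\gamma\omega=\int_{\gamma\cdot\beta}\omega$, which is a period of $\omega$.

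I must rule out nonzero periods. If $M$ is simply connected, or more generally $H^1(M,\partial M;\mathbb R)=0$, then $\omega=df$ with $f$ locally constant on $\partial M$. Since $\partial M\cong\mathbb S^{n-1}$ is connected for $n\ge 2$, $f$ is constant on $\partial M$ and $\int_\gamma\omega=f(y)-f(x)=0$.

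In general I would argue as in the non-simply-connected part of Proposition \ref{proposition3.5}. There are chords $\gamma$ staying in a collar of $\partial M$, where $\omega$ is exact and has zero integral. Take the constant value $A=\pi$ together with the constraint $|\omega|<1$ from (iv). Since $A(\gamma)=L(\gamma)-\int_\gamma\omega$ is constant, the period map $c\mapsto\int_c\omega$ would need to be compatible with constant action across all classes. I expect this to force the periods to vanish via a first-variation argument. Specifically, vary the endpoint $x$ along $\partial M$ within a fixed relative class: $L$ is then the boundary-distance-like function and $\int\omega$ is locally constant. I would then compare with the short chords near glancing, whose length tends to $0$, to reach a contradiction unless the class is trivial.

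This is the step I expect to be the main obstacle. If it resists, I would check whether the proof of Theorem \ref{theorem4.1} in Subsection \ref{section4.6} in fact already yields $\omega$ exact, not merely closed; that is plausible, since the argument there presumably constructs a primitive. In that case Step 1 is immediate.

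\emph{Step 2: the length identity and conclusion.} Once $\int_\gamma\omega=0$ for every chord, hypothesis (iii) gives $L_g(\gamma)=A(\gamma)=\pi$ for every geodesic chord. The manifold is compact with totally geodesic round boundary by hypothesis (i). \cite{Bangert} then yields that $(M,g)$ is isometric to the standard unit hemisphere.

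A minor point needs checking: \cite{Bangert} may require non-trapping, or that every geodesic leaving the boundary returns. If so, I would obtain this from the proof of Theorem \ref{theorem4.1}, which should already show that chords exist from every inward direction. Failing that, I would use the length bound: a trapped geodesic would have infinite length, and near-trapped chords would violate $L=\pi$ by continuity of exit time on the non-trapped set.
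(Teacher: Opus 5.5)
Your overall route is the paper's: Theorem \ref{theorem4.1} gives $\Omega\equiv 0$, so $Y\equiv 0$ and magnetic chords are $g$-geodesic chords. Once each chord has $g$-length $\pi$, Bangert's theorem yields the hemisphere; the non-trapping it needs comes from Lemma \ref{lemma4.13}. You are right, and more explicit than the paper's one-line proof, that $d\omega=0$ alone does not give $L_g(\gamma)=A(\gamma)+\int_\gamma\omega=\pi$ when $\omega$ has nonzero periods. The paper obtains the missing fact inside the proof of Theorem \ref{theorem4.1}. There, comparing each minimizing chord (Lemma \ref{lemma4.14}) with a time-reversed chord gives $\int_{\gamma_{x,v}}\omega\ge 0$ for every $(x,v)\in\partial_+SM$. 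Santal\'o's formula and the oddness of $\omega$ on $SM$ then show this nonnegative function has zero integral, so $\int_\gamma\omega\equiv 0$ for every chord. Your fallback therefore points to the right place, but what that proof supplies is the vanishing of every chord integral, not a primitive of $\omega$.

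Your primary argument for Step 1 in the non-simply-connected case has a genuine gap, because near-glancing chords are not short in this setting. Lemma \ref{lemma4.5} forces $\Omega|_{\partial M}=0$. Lemma \ref{lem:extension} then shows that as $v_n$ tends to a glancing vector, the exit times tend to $\pi$ and the chords converge to boundary great semicircles. Short chords occur only when $\Omega_x\neq 0$ at a boundary point, which is exactly what Lemma \ref{lemma4.5} rules out, so no contradiction of the kind you describe is available. Your remark about chords lying in a collar can still be made into a self-contained proof by using connectedness instead of shortness:
\begin{itemize}
\item By Lemmas \ref{lemma4.7} and \ref{lemma4.13}, the chord $\gamma_{x,v}$ and its exit time depend smoothly on $(x,v)\in\partial_+SM$.
\item Since $\omega$ is closed and vanishes on $T\partial M$, the first-variation computation of Lemma \ref{lem:variation} applied to $\int\omega$ alone shows that $F(x,v)=\int_{\gamma_{x,v}}\omega$ has derivative $\omega(Y)-\omega(X)=0$. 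Hence $F$ is locally constant on the connected set $\partial_+SM$.
\item The limit computation in the proof of Lemma \ref{lem:extension} gives $F\to\int_0^\pi\omega(\dot\gamma_v)\,dt=0$ as $v$ becomes glancing.
\end{itemize}
Hence $F\equiv 0$, every geodesic chord has length $\pi$, and your Step 2 goes through.
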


\begin{proof}
Once we know that $\Omega$ vanishes, the result follows immediately from the main result in \cite{Bangert}.
\end{proof}

\begin{theorem}[Stationary rigidity] \label{theorem4.4}
Consider an asymptotically hyperbolic, globally stationary spacetime. Assume that for each $p\in {\mathcal I}$, every null geodesic from $p$ passes through its spacetime antipode $q\in {\mathcal I}$. Then the spacetime is conformally isometric to anti-de Sitter spacetime.
\end{theorem}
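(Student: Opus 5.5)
The plan is to reduce Theorem \ref{theorem4.4} to Theorem \ref{theorem4.1} and Corollary \ref{corollary4.3} by passing to the optical manifold, in the same way that Proposition \ref{proposition3.5} was reduced to the theorem of \cite{Bangert}. Write the spacetime metric in the stationary form \eqref{eq2.1}, $ds^2=-N^2(dt+\omega)^2+h$, and divide by $N^2$. Since null geodesics are conformally invariant up to reparametrization, we may work with $d\tilde s^2=-(dt+\omega)^2+\tilde h$ on $\mathbb R\times\Sigma$, where $\tilde h=h/N^2$ is the Fermat metric. By Remark \ref{remark2.6} and Remark \ref{remark2.7}, this metric extends to $\mathcal I=\mathbb R\times\partial\Sigma$. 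There it induces $-dt^2+\hat h$ with $\hat h$ the round metric, and $\partial\Sigma$ is totally geodesic for $\tilde h$. We will apply Theorem \ref{theorem4.1} to $(\Sigma,\tilde h)$ with the 1-form $\omega$.

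First we check the hypotheses. Hypothesis (i) is Remark \ref{remark2.7}. For hypothesis (ii), we note that the induced metric on $\mathcal I$ has no cross term. So the pullback $\iota^*\omega$ vanishes, or can be made to vanish by redefining $t\mapsto t+f$ near $\mathcal I$; this redefinition is harmless since only $d\omega$ enters \eqref{eq4.2}. For hypothesis (iv), timelikeness of $\partial/\partial t$ in $d\tilde s^2$ is automatic. The relevant condition is instead that the level sets of $t$ are spacelike, which requires $|\omega|_{\tilde h}<1$; this holds because the Killing field is globally timelike and $t$ is a global time function. On a compact $\Sigma$ this gives $\max|\omega|<1$. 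If this fails, I would appeal to Remark \ref{remark4.2} instead. In that case non-trapping follows from Lemma \ref{lemma2.2} together with the hypothesis that every null geodesic reaches $q$. Moreover $d_{\tilde h}(x,-x)=\pi$ follows by comparing with null curves.

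Next we verify hypothesis (iii), the constant Mañé action. By the Appendix, a future null geodesic $\gamma$ with tangent $\ell$ satisfies $dt(\ell)+\omega(\ell)=|\ell|_{\tilde h}$, and its projection to $\Sigma$ is a magnetic geodesic for $(\tilde h,d\omega)$. Conversely, every magnetic chord lifts to a null geodesic. Integrating from $p$ to $q$ as in \eqref{eq3.4} gives
\[
T_q-T_p=L_{\tilde h}(\varphi\circ\gamma)-\int_{\varphi\circ\gamma}\omega=A(\varphi\circ\gamma).
\]
Here we must fix the sign convention of \eqref{eq3.7} against \eqref{eqA.1}, replacing $\omega$ by $-\omega$ if necessary; this does not affect $|\omega|$. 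Lemma \ref{lemma2.2} guarantees that $q$ is the spacetime antipode of $p$. Computing $T_q-T_p$ along a null geodesic of $\mathcal I$ gives $\pi$, since the boundary metric is round and $\iota^*\omega=0$. Hence every magnetic chord has $A=\pi$.

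Theorem \ref{theorem4.1} then yields $d\omega=0$, and Corollary \ref{corollary4.3} shows that $(\Sigma,\tilde h)$ is a round hemisphere, which is simply connected. Therefore $\omega=df$ globally, and the change of coordinates $t\mapsto t+f$ turns $d\tilde s^2$ into $-dt^2+\tilde h$. This is the Einstein static universe on $\mathbb R\times\mathbb H^n$, which is conformal to anti-de Sitter spacetime. I expect the main obstacle to be verifying hypothesis (iv), or alternatively the conditions of Remark \ref{remark4.2}, from the global stationarity assumption, together with a careful treatment of the boundary behaviour of $\omega$ under the conformal compactification.
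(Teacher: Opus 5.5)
Your proposal is correct and follows the paper's proof. You pass to the optical manifold $(\Sigma,\tilde h)$, integrate $dt(\dot\gamma)+\omega(\dot\gamma)=|\dot\gamma|_{\tilde h}$ along each null geodesic from $p$ to its antipode $q$ to get $A=\pi$ for every magnetic chord, and invoke Theorem~\ref{theorem4.1} and Corollary~\ref{corollary4.3}; your closing step (simple connectivity of the hemisphere gives $\omega=df$, which is absorbed into $t$) usefully makes explicit what the paper leaves implicit. The paper justifies $|\omega|_{\tilde h}<1$ only briefly from Definition~\ref{definition2.3}, and your reading of it as spacelikeness of the $t$-slices is the accurate one; however, your fallback via Remark~\ref{remark4.2} does not work as stated, because non-trapping must exclude orbits trapped in the interior, and a hypothesis about null geodesics issuing from ${\mathcal I}$ cannot do that without something like asymptotic simplicity.
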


\begin{proof}
Let $(\Sigma,{\tilde h})$ denote the optical manifold and metric as described by Definitions \ref{definition2.4} and \ref{definition2.5}, obtained by identifying points in spacetime if they lie along the same integral curve of the Killing vector field $\frac{\partial}{\partial t}$ and dividing the quotient space metric $h$ by $N^2$, the square of the lapse function.

By Definition \ref{definition2.3}, the coordinate expression \eqref{eq2.1} is nondegenerate. Specifically, $dt+\omega$ does not vanish, and so $\left \vert \omega \right \vert_{\tilde h} <1$ where ${\tilde h}=h/N^2$ is the optical metric on the quotient Riemannian manifold (identifying points, as described in the Appendix, along integral curves of the Killing vector field $\frac{\partial}{\partial t}$).

Let ${\dot \gamma}$ denote the tangent vector of an affinely parametrized geodesic $\gamma$ of the metric \eqref{eq2.1}. Then ${\dot \gamma}$ obeys
\begin{equation}
\label{eq4.4}
dt\left ( {\dot \gamma}\right )+\omega\left ( {\dot \gamma}\right ) =\pm \sqrt{{\tilde h}\left ( {\dot \gamma},{\dot \gamma} \right )}.
\end{equation}
Say that $\gamma$ departs ${\mathcal I}$ at point $p$ with time coordinate $t=T_p$ and returns to ${\mathcal I}$ at point $q$ with time coordinate $t=T_q$. By assumption, $T_q-T_p=\pi$, so
\begin{equation}
\label{eq4.5}
\pi=\int_{\gamma} \left ( \left \vert {\dot \gamma}\right \vert_{\tilde h} -\omega\left ( {\dot \gamma}\right )\right )dt \equiv A(\gamma),
\end{equation}
where, consistent with our usual abuse of notation, the integral is actually taken over $\varphi\circ\gamma$, the image of $\gamma$ under the map $\varphi:M\to\Sigma$.
Thus magnetic chords in the optical manifold all have Ma\~n\'e action $\pi$ and the assumptions of Theorem \ref{theorem4.1} are fulfilled.
By Corollary \ref{corollary4.3}, the optical manifold is a round hemisphere, so spacetime is conformal to a domain in the Einstein static universe $\left ({\mathcal R}\times {\mathbb S}^n, -dt^2+g({\mathbb S}^n,{\rm can})\right )$.
\end{proof}

\subsection{Preliminaries}\label{section4.3}
Given the inclusion $\iota:\partial M\to M$, then condition $\iota^*\omega=0$ immediately implies that $\iota^{*}(d\omega)\equiv \iota^{*}\Omega=0$. Taking this together with the constancy of the Ma\~n\'e action on magnetic chords (equation \eqref{eq:A=pi}), we obtain more:

\begin{lemma} \label{lemma4.5}
If the Ma\~n\'e action of every magnetic chord $\gamma$ obeys $A(\gamma)=\pi$ and if $\iota^*\omega=0$, then $d\omega\vert_x=\Omega_{x}=0$ for all $x\in\partial M$.\label{lemma:Y=0}
\end{lemma}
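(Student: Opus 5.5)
Since $\iota^*\Omega=0$, the restriction of $\Omega_x$ to $T_x\partial M\times T_x\partial M$ already vanishes for $x\in\partial M$. It therefore suffices to show that the mixed components $\Omega_x(w,\nu)$ vanish, where $\nu$ is the inward unit normal at $x$ and $w\in T_x\partial M$. I will argue by contradiction. Suppose $\Omega_x(w,\nu)\neq 0$ for some $x\in\partial M$ and some unit $w\in T_x\partial M$. Since $\Omega(-w,\nu)=-\Omega(w,\nu)$, after replacing $w$ by $-w$ we may assume $\Omega_x(w,\nu)=-2c<0$ for some $c>0$. The idea is that the magnetic force then pushes trajectories starting almost tangent to $w$ back out of $M$. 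This produces arbitrarily short magnetic chords, whose Ma\~n\'e action tends to $0$, contradicting \eqref{eq:A=pi}.

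\textbf{Setup.} Work in Fermi (boundary normal) coordinates near $x$. Let $\rho$ be the distance to $\partial M$, so that $\nabla\rho=\nu$ on $\partial M$ and $\operatorname{Hess}\rho|_{\partial M}$ is, up to sign, the second fundamental form of $\partial M$. By hypothesis (i) in Theorem \ref{theorem4.1} this vanishes. Along any unit-speed magnetic geodesic $\gamma$ with $\rho(\gamma)$ small, \eqref{eq4.1} and \eqref{eq4.2} give
\begin{equation*}
\frac{d^2}{dt^2}\rho(\gamma(t))=\operatorname{Hess}\rho(\dot\gamma,\dot\gamma)+\langle\nabla\rho,Y(\dot\gamma)\rangle =\Omega(\dot\gamma,\nabla\rho)+\mathcal O(\rho(\gamma(t))),
\end{equation*}
where the error term uses $\operatorname{Hess}\rho=\mathcal O(\rho)$, which follows from total geodesy. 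By continuity, there are a neighbourhood $U$ of $x$ and a neighbourhood $V$ of $w$ in $SM$ such that $\ddot\rho\le -c$ whenever $\gamma(t)\in U$, $\dot\gamma(t)\in V$ and $\rho<\delta$.

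\textbf{Short chords.} Take entry data $v_\varepsilon=\cos\varepsilon\, w+\sin\varepsilon\,\nu\in S_x^+M$ with $\varepsilon\to0$. Then $\rho(0)=0$, $\dot\rho(0)=\sin\varepsilon$ and $\ddot\rho\le -c$, as long as the trajectory stays in $U\cap\{\rho<\delta\}$ with tangent in $V$. Comparing with the parabola $\sin\varepsilon\, t-ct^2/2$ shows that $\rho$ returns to $0$ at some time $t_\varepsilon\le 2\sin\varepsilon/c$. A continuity (bootstrap) argument shows the trajectory does not leave $U$ or $V$ before then: on $[0,t_\varepsilon]$ the trajectory moves a distance $\mathcal O(\varepsilon)$ and its velocity changes by $\mathcal O(\varepsilon)$, since $|\nabla_{\dot\gamma}\dot\gamma|\le\|\Omega\|_\infty$. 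Also $\rho>0$ on $(0,t_\varepsilon)$, so the trajectory is a genuine magnetic chord, with exit point $\gamma(t_\varepsilon)\in\partial M$. I expect this step to need the most care, namely making the bootstrap precise and ruling out a premature tangential touch of $\partial M$. Strict concavity of $\rho(t)$ handles the latter, because it forces $\dot\rho(t_\varepsilon)<0$.

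\textbf{Contradiction.} The chord has unit speed and length $t_\varepsilon\le 2\sin\varepsilon/c$. Hence
\[
A(\gamma_{x,v_\varepsilon})\le(1+\max|\omega|_g)\,t_\varepsilon\to 0
\]
as $\varepsilon\to0$, which contradicts $A(\gamma)=\pi$ for every chord. Note that only boundedness of $\omega$ is used here, not hypothesis (iv). Therefore $\Omega_x(w,\nu)=0$ for all tangent $w$. Combined with $\iota^*\Omega=0$, this gives $\Omega_x=0$ for every $x\in\partial M$.
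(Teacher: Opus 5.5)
Your proof is correct and follows essentially the same route as the paper: you reduce to the mixed components $\Omega_x(w,\nu)$, use total geodesy of $\partial M$ so that the normal acceleration of near-glancing trajectories is governed by $\Omega(\dot\gamma,\nu)<0$, and obtain chords of length $O(\varepsilon)$ whose Ma\~n\'e action tends to $0$, contradicting $A=\pi$. The only difference is that the paper uses a Taylor expansion of the normal coordinate with a uniform $O(t^3)$ remainder, whereas your comparison of $\ddot\rho\le -c$ with the parabola $t\sin\varepsilon - ct^2/2$ makes the exit-time bound and the transversal exit slightly more explicit.
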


\begin{proof}
Let $x \in \partial M$ and let $\nu$ be the inward-pointing unit normal vector at $x$. Because the pullback of $\Omega$ to the boundary vanishes, $\Omega_x(u,v) = 0$
for any two tangent vectors $u,v \in T_x \partial M$. Therefore, it suffices to show that the linear functional $v \mapsto \Omega_x(v, \nu)$ on $T_x \partial M$ is trivial.

Suppose, for the sake of contradiction, that $\Omega_x$ is not identically zero. By linearity, there must exist a unit ``glancing'' vector $v \in S_x \partial M$ such that $\Omega_x(v, \nu) = -c < 0$. Let $\{v_n\} \subset S_x^+ M$ be a sequence of strictly inward-pointing unit vectors converging to $v$. In boundary normal coordinates $(x', x^\perp)$, the normal velocity is $v_n^\perp = \dot{x}^\perp(0) > 0$. Because $\partial M$ is totally geodesic, its second fundamental form vanishes. Thus, the normal acceleration at $t=0$ is given by $\ddot{x}^\perp(0) = g(Y(v_n),\nu(x)) =: -c_{n}$. Since $c_{n}\to c>0$ the normal acceleration at $t=0$ is strictly negative and bounded away from zero for large $n$.
By the compactness of $SM$, we can Taylor expand the normal coordinate of the trajectory $\gamma_n$ associated with $v_n$ with a uniform error term
\begin{equation}
    x^\perp(t) = v_n^\perp t - \frac{1}{2} c_{n} t^2 + O(t^3) .
\end{equation}
Because the initial normal velocities obey $v_n^\perp > 0$ but the accelerations obey $-c_{n}<0$,
the trajectories are forced back into the boundary at exit times $T_n$ corresponding to the positive root of $x^\perp(T_n) = 0$. Discarding higher-order terms, these ``exit times'' are asymptotic to:
\begin{equation}
    T_n \approx \frac{2v_n^\perp}{c_{n}}
\end{equation}
As $v_n \to v$, the normal velocity obeys $v_n^\perp \to 0$, which implies that the exit times obey $T_n \to 0$.

Now we evaluate the Ma\~n\'e action of these ``short chords''. Because $SM$ is compact, $1 - \omega_{x}(v)$ is bounded above by some constant $K > 0$. Thus, the action is bounded by
\begin{equation}
    A(\gamma_n) = \int_0^{T_n} (1 - \omega(\dot{\gamma}_n)) dt \le K T_n .
\end{equation}
Because $T_n \to 0$, this contradicts $A(\gamma_n) = \pi$ for all $n$.
\end{proof}

\begin{lemma}[Trapped glancing trajectories] \label{lem:trapped}
If $\partial M$ is totally geodesic and $\Omega_x=0$ for all $x\in\partial M$, then for any vector $v \in T_x \partial M$ with $|v|_g = 1$ the magnetic geodesic $\gamma_{x,v}$ is a geodesic and remains within $\partial M$.\label{lemma4.6}
\end{lemma}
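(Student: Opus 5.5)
The plan is to exhibit a trajectory that stays in $\partial M$ and solves \eqref{eq4.1}, and then invoke uniqueness of solutions to the magnetic ODE. Let $x\in\partial M$ and $v\in T_x\partial M$ with $|v|_g=1$, and let $c:\mathbb{R}\to\partial M$ be the unit-speed geodesic of the induced (round) metric on $\partial M$ with $c(0)=x$, $\dot c(0)=v$. It exists for all time because $\partial M\simeq{\mathbb S}^{n-1}$ is compact; concretely it is a great circle. Our goal is to show that $c$, viewed as a curve in $M$ through the inclusion $\iota$, satisfies $\nabla_{\dot c}\dot c=Y(\dot c)$.

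First, we compare the two covariant derivatives. By the Gauss formula, $\nabla_{\dot c}\dot c=\nabla^{\partial M}_{\dot c}\dot c+\mathrm{II}(\dot c,\dot c)\,\nu$. The first term vanishes because $c$ is a geodesic of $\partial M$, and the second vanishes because $\partial M$ is totally geodesic. So $c$ is a $g$-geodesic of $M$.

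Second, we handle the magnetic term. For any $Z\in T_{c(t)}M$ we have $\langle Y(\dot c),Z\rangle_g=\Omega_{c(t)}(\dot c,Z)=0$, since $\Omega$ vanishes at every point of $\partial M$ by hypothesis. The hypothesis gives vanishing of the full form $\Omega_x$ on $T_xM$, not just of its pullback; this is exactly what Lemma \ref{lemma4.5} delivers. Hence $Y(\dot c)=0$, and so $\nabla_{\dot c}\dot c=0=Y(\dot c)$ along $c$. Thus $c$ solves \eqref{eq4.1} with initial data $(x,v)$, and it has unit speed, consistent with the fact that the magnetic flow preserves speed because $Y$ is skew.

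Third, we conclude by uniqueness. The main point needing care is uniqueness for the ODE \eqref{eq4.1} at boundary points of a manifold with boundary. I would handle this by extending $g$ and $\omega$ smoothly (or $C^2$) across $\partial M$ to a slightly larger open manifold $M'$. The magnetic ODE is then a smooth second-order ODE on $M'$, so Picard--Lindelöf gives a unique solution through $(x,v)$. The curve $c$ is such a solution, so $\gamma_{x,v}=c$ on every interval where both are defined. The solution $\gamma_{x,v}$ in $M$ is understood as the restriction of this unique solution, so $\gamma_{x,v}$ is the geodesic $c$ and remains in $\partial M$ for all time; in particular it is trapped and never enters the interior.
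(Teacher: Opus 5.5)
Your proposal is correct and follows the paper's argument. You show that the boundary geodesic through $(x,v)$ solves \eqref{eq4.1}: total geodesy of $\partial M$ removes the normal acceleration, and $\Omega|_{\partial M}=0$ removes the magnetic term, so uniqueness of solutions forces $\gamma_{x,v}$ to coincide with it. Your Gauss-formula step and your extension of $(g,\omega)$ across $\partial M$ to justify uniqueness make explicit what the paper's short proof takes for granted.
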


\begin{proof}
Because $\partial M$ is totally geodesic, any solution of the geodesic equation that is initially tangent to $\partial M$ remains in $M$. By Lemma \ref{lemma4.5}, $\Omega_x=0$ for $x\in\partial M$, so the geodesic and magnetic geodesic equations agree on $\partial M$. Hence every geodesic initially tangent to $\partial M$ is a magnetic geodesic that remains in $\partial M$, and by the uniqueness of solutions of the magnetic geodesic equation then all magnetic geodesics initially tangent to $\partial M$ remain in $\partial M$.
\end{proof}

The next lemma is standard. We include a proof for completeness.

\begin{lemma}[The scattering relation] \label{lem:scattering}
Assume the assumptions of Lemma \ref{lemma4.6} hold and that the magnetic flow is non-trapping. The scattering relation $\mathcal S:=\mathcal{S}_{g,\Omega}: \partial_+ SM \to \partial_- SM$ which maps $(x,v)\in\partial_+SM$ to the exit position and velocity of $\gamma_{x,v}$, is a diffeomorphism.\label{lemma4.7}
\end{lemma}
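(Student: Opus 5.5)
We will show that $\mathcal S$ is well defined, smooth, and has a smooth inverse, by realising it as the flow map of the magnetic vector field $G$ on $SM$ evaluated at the exit time. The key ingredient is the smoothness of the exit time $\tau:\partial_+SM\to[0,\infty)$, where $\tau(x,v)$ is the first positive time at which $\gamma_{x,v}$ reaches $\partial M$. Non-trapping makes $\tau$ finite. By Lemma \ref{lemma4.6}, glancing vectors produce trajectories that stay in $\partial M$, so it is convenient to set $\tau=0$ on $\partial_0SM=S\partial M$ and to regard $\partial_+SM$ as the closed set of inward or tangent vectors.

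Throughout we extend $(M,g,\omega)$ smoothly across $\partial M$ to a slightly larger manifold $\widetilde M$, so that the magnetic flow $\phi_t$ is a globally defined smooth flow near $SM$. Let $\rho$ be a boundary defining function, for instance the normal coordinate $x^\perp$.

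\textbf{Interior points.} On the interior of $\partial_+SM$ we have $\dot\rho>0$ at $t=0$. At the exit time the trajectory meets $\partial M$ transversally. Indeed, if it met $\partial M$ tangentially, then by Lemma \ref{lemma4.6} and uniqueness of solutions to \eqref{eq4.1} the whole trajectory, run backward, would lie in $\partial M$. This contradicts the fact that it starts strictly inward. Hence $\frac{d}{dt}\rho(\phi_t(x,v))<0$ at $t=\tau$, and the implicit function theorem shows $\tau$ is smooth there. It follows that $\mathcal S(x,v)=\phi_{\tau(x,v)}(x,v)$ is smooth. The same transversality argument, applied to the reversed flow, rules out interior tangencies, so $\mathcal S$ lands in the interior of $\partial_-SM$.

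\textbf{The inverse.} The inverse is built the same way from the reversed magnetic flow, namely the magnetic flow of $(g,-\Omega)$ composed with the flip $v\mapsto -v$. We therefore have $\mathcal S^{-1}=\mathcal S_{g,-\Omega}$ up to flips, and it is smooth for the same reasons. On $S\partial M$ both maps are the identity, because glancing trajectories have $\tau=0$.

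\textbf{Main obstacle: smoothness at the glancing set.} The hard step is smoothness of $\tau$, and hence of $\mathcal S$, near $S\partial M$, where the implicit function theorem degenerates. In general $\tau$ is only continuous there, even for strictly convex boundaries. Here the boundary is totally geodesic and $\Omega=0$ on $\partial M$ by Lemma \ref{lemma4.5}, so near $S\partial M$ the dynamics are tangent to the boundary to second order. We would first try to prove continuity. This follows because a nearby inward trajectory shadows a boundary geodesic, which stays in $\partial M$.

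For higher regularity we would write $\rho(\phi_t(x,v))=t\,F(t,x,v)$ with $F$ smooth and $F(0,x,v)=v^\perp$. We would then solve $F(\tau,x,v)=0$, which is the Melrose-type approach. If $\partial_tF\neq0$ is not available, we would fall back on the statement needed later, namely that $\mathcal S$ is a diffeomorphism of the open sets of strictly inward and strictly outward vectors, extending as a homeomorphism on the closures.
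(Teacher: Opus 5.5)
Your ``Interior points'' and ``The inverse'' paragraphs are essentially the paper's proof. Tangential exit is ruled out by Lemma \ref{lemma4.6} together with backward uniqueness. The implicit function theorem applied to $\rho(\phi_t(x,v))$ at $t=\tau$ then gives a smooth exit time. The inverse comes from the backward flow; your flipped $(g,-\Omega)$ flow is the same thing, and, as in the paper, this step needs non-trapping in backward time as well. The paper defines $\partial_\pm SM$ by strict inequalities, so this already proves the lemma, and your ``fallback'' statement on the open sets is exactly what is claimed.

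You should, however, drop the glancing-set discussion, because it is wrong in this setting. The convention $\tau=0$ on $S\partial M$ comes from the strictly convex case. With $\partial M$ totally geodesic and $\Omega|_{\partial M}=0$, it is incompatible with continuity:
\begin{itemize}
\item A glancing trajectory stays in $\partial M$ forever, so $\rho(\phi_t(x,v))\equiv 0$ for glancing $(x,v)$.
\item Write $\rho(\phi_t(x,v))=v^\perp G(t,x,v)$. At a glancing state, $G(\cdot,x,v)$ is the normal component of a magnetic Jacobi field along a boundary geodesic. It solves a scalar linear equation $\ddot G+a(t)G=0$ with $G(0)=0$, $\dot G(0)=1$.
\item Take any $T$ smaller than the first positive zero of that solution (any $T$ if there is none). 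Then inward trajectories with small enough $v^\perp$ stay in the interior of $M$ on $(0,T]$.
\end{itemize}
So the exit time stays bounded away from $0$ near $S\partial M$. The shadowing you invoke keeps nearby trajectories \emph{inside} $M$, close to the boundary geodesic; it does not push them out quickly. In the round hemisphere every chord has $\tau=\pi$ and exits at $-x$. This is exactly what the paper later proves in Lemma \ref{lem:extension}, using the constant-action hypothesis.

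Consequently, with your closed version of $\partial_+SM$:
\begin{itemize}
\item $\tau$ and $\mathcal S$ are discontinuous at $S\partial M$, so ``the identity on $S\partial M$'' is not a continuous extension, and ``extending as a homeomorphism on the closures'' is false.
\item The Melrose-type factorization cannot work. For glancing $(x,v)$ one has $F(\cdot,x,v)\equiv 0$, so its zero set does not pick out an exit time at all.
\end{itemize}
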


\begin{proof}
Let $\phi^t: SM \to SM$ denote the magnetic flow. Because the flow is non-trapping, for every $(x,v) \in \partial_+ SM$, there exists a strictly positive first time $\tau(x,v) > 0$ such that the trajectory is incident on the boundary: $\phi^{\tau(x,v)}(x,v) \in \partial SM$. We write this ``exit state'' as $(y,w) = \phi^{\tau(x,v)}(x,v)$. The scattering relation is defined by $\mathcal{S}_{g,\Omega}(x,v) = (y,w)$.

To show that $\mathcal{S}_{g,\Omega}$ is smooth, we first prove that the exit time function $\tau: \partial_+ SM \to (0,\infty)$ is smooth. We must verify that the trajectory intersects the boundary transversally at the exit point.

Suppose, for the sake of contradiction, that the trajectory exits tangentially, meaning that $w \in T_y \partial M$. By Lemma \ref{lem:trapped}, the magnetic geodesic with initial condition $(y,w)$ must remain entirely within $\partial M$ for all time. By the uniqueness of solutions to the ODE, the entire backward trajectory must also be contained in $\partial M$. This contradicts the assumption that the initial state $(x,v)$ lies in $\partial_+ SM$, which requires $\langle v, \nu(x) \rangle_g > 0$ (i.e., $v$ is strictly inward-pointing). Therefore, the trajectory cannot exit tangentially. Since it is exiting, the velocity $w$ must be strictly outward-pointing, meaning $(y,w) \in \partial_- SM$ and $\langle w, \nu(y) \rangle_g < 0$.

Now, let $\rho: M \to \mathbb{R}$ be a smooth boundary defining function such that $\partial M = \rho^{-1}(0)$ and the inward unit normal is given by $\nabla \rho = \nu$ on $\partial M$. We define $F: \partial_+ SM \times (0,\infty) \to \mathbb{R}$ by
\begin{equation}
    F(x,v,t) = \rho(\pi \circ \phi^t(x,v)) ,
\end{equation}
where $\pi: SM \to M$ is the canonical projection. The exit time $\tau(x,v)$ is a root of this function: $F(x,v,\tau(x,v)) = 0$. We evaluate the partial derivative of $F$ with respect to time at the exit point to be
\begin{equation}
    \frac{\partial F}{\partial t}(x,v,\tau(x,v)) = d\rho(w) = \langle \nu(y), w \rangle_g < 0.
\end{equation}
Because this derivative is non-zero, the implicit function theorem guarantees that the root $\tau(x,v)$ depends smoothly on the initial conditions $(x,v)$. Consequently, the composition $\mathcal{S}_{g,\Omega}(x,v) = \phi^{\tau(x,v)}(x,v)$ is a smooth map from $\partial_+ SM$ to $\partial_- SM$.

Finally, we show that $\mathcal{S}_{g,\Omega}$ is a diffeomorphism by constructing a smooth inverse. Because the forward flow is non-trapping, the invertibility of the flow map $\phi^t$ ensures that integrating the ODE backward in time is also non-trapping. For any $(y,w) \in \partial_- SM$, we can define a backward entry time $\tau_-(y,w) > 0$. By the exact same transversality argument, because the entry velocity $v$ satisfies $\langle v, \nu(x) \rangle_g > 0$, the backward entry time function $\tau_-$ is smooth.

The inverse map $\mathcal{S}_{g,\Omega}^{-1}: \partial_- SM \to \partial_+ SM$ is given by $\mathcal{S}_{g,\Omega}^{-1}(y,w) = \phi^{-\tau_-(y,w)}(y,w)$, which is a composition of smooth functions. Since both $\mathcal{S}_{g,\Omega}$ and $\mathcal{S}_{g,\Omega}^{-1}$ are smooth bijections, $\mathcal{S}_{g,\Omega}$ is a diffeomorphism.
\end{proof}

\subsection{The free-time action}\label{section4.4}
Define the \emph{energy} of an element of $TM$ by
\begin{equation}
E(x,v):=\frac{1}{2}|v|_{g}^{2}.
\end{equation}
The energy of a curve $\gamma$ at $x=\gamma(t)$ is then $E(\gamma(t),\gamma'(t))$. Unit speed curves have energy $E=\frac12$ at every point. When dealing with variational properties of unit-speed magnetic geodesics it is convenient to work with the so-called {\it free-time action functional}.
The free-time action of a curve $\gamma:[0,T]\to M$ is defined as
\begin{equation}
{\mathcal A}(\gamma):=\int_{0}^{T}\left (L(\gamma,\dot{\gamma})+\frac{1}{2}\right)\,dt,
\end{equation}
where
\begin{equation}
L(x,v)= \frac{1}{2}|v|_g^2 - \omega_x(v).
\end{equation}
If $\gamma$ has unit speed its free-time action equals its Ma\~n\'e action.

It is convenient to introduce the \emph{outflux} and \emph{influx boundaries}
\begin{equation}\label{eq4.14}
\partial_{\pm} SM := \{ (x, v) \in SM \mid x \in \partial M:\, \pm\langle v, \nu(x) \rangle_g > 0\},
\end{equation}
where $\nu(x)$ is the inward-pointing unit normal. We define the open unit cotangent ball bundle
\begin{equation}\label{eq4.15}
B^*\partial M = \{ (x, p) \in T^*\partial M:\, |p|_g < 1 \}
\end{equation}
and smooth diffeomorphisms $\psi_\pm : \partial_\pm SM \to B^*\partial M$ by
\begin{equation}
\label{eq4.16}
\psi_\pm(x, v):= \left (x, \left ( v^{\rm tan}\right )^\flat\right ) \equiv \left ( v_x^{\rm tan}\right )^\flat.
\end{equation}

\begin{lemma}[First variation of the free-time action] \label{lem:variation}
Let $s \mapsto (x(s), v(s))$ be a smooth 1-parameter family of entry states in $\partial_+ SM$, exiting at $y(s)$ with velocity $w(s)$ at time $T(s)$. Let $X = x'(0)$ and $Y = y'(0)$. Let
\begin{equation}
\label{eq4,17}
\begin{split}
P_{\rm in} :=&\, \psi_+(x(0),v(0))\equiv \left ( x(0),\left ( v^{\rm tan}(0)\right )^{\flat}\right ) \equiv \left ( v_{x(0)}^{\rm tan}(0)\right )^{\flat},\\
P_{\rm out} :=&\, \psi_-(y(0), w(0))\equiv \left ( y(0),\left ( w^{\rm tan}(0)\right )^{\flat}\right ) \equiv \left ( w_{y(0)}^{\rm tan}(0)\right )^{\flat}.
\end{split}
\end{equation}
Assume as well that $\omega\vert_{\partial M}=0$. Then the derivative of the free-time action of $\gamma_{x(s),v(s)}$ is
\begin{equation}
    \left. \frac{d}{ds} {\mathcal A}(\gamma_{x(s),v(s)}) \right|_{s=0} = P_{\rm out}(Y) - P_{\rm in}(X) .
\end{equation}
\end{lemma}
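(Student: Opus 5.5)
The plan is to use the standard first variation formula for a Lagrangian action with free endpoints and free time. Write $\Gamma(s,t)=\gamma_{x(s),v(s)}(t)$ for $t\in[0,T(s)]$, with variation field $J=\partial_s\Gamma|_{s=0}$, so that $J(0)=X$ and $J(T(0))+T'(0)\,w(0)=Y$. The last identity comes from differentiating $y(s)=\Gamma(s,T(s))$. Differentiating under the integral, we get
\begin{equation*}
\frac{d}{ds}{\mathcal A}\Big|_{s=0} = T'(0)\Big(L(y,w)+\tfrac12\Big) + \int_0^{T}\Big( \partial_x L\cdot J + \partial_v L\cdot \dot J\Big)\,dt .
\end{equation*}
We then integrate by parts in $t$. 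The magnetic geodesic equation \eqref{eq4.1}--\eqref{eq4.2} is exactly the Euler--Lagrange equation of $L=\tfrac12|v|^2-\omega(v)$. Indeed, the variation of $-\int\omega(\dot\gamma)$ produces $d\omega(\dot\gamma,J)$, which matches the Lorentz-force term. Hence the interior integrand is a total derivative, and the integral reduces to the boundary terms $\big[\partial_vL(\gamma,\dot\gamma)(J)\big]_0^{T}$. Here $\partial_vL(x,v)=v^\flat-\omega_x$ is the fiberwise derivative (momentum).

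Next we evaluate these terms using the boundary hypotheses. At the exit we have $J(T)=Y-T'(0)w$. Since $|w|=1$ and $\omega_y=0$ (the hypothesis $\omega|_{\partial M}=0$, meaning the full covector vanishes on $\partial M$, not just its pullback), the exit boundary term is
\begin{equation*}
\langle w, Y\rangle - T'(0)\langle w,w\rangle = \langle w,Y\rangle - T'(0).
\end{equation*}
The time-derivative term is $T'(0)(L(y,w)+\tfrac12)=T'(0)(\tfrac12-0+\tfrac12)=T'(0)$, which cancels the $-T'(0)$. This cancellation is exactly why the free-time action carries the $+\tfrac12$ and is used at unit speed. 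At the entry the term is $-\langle v,X\rangle$, again because $\omega_x=0$. Finally $X\in T_x\partial M$ and $Y\in T_y\partial M$, since $x(s),y(s)\in\partial M$. So only tangential components survive: $\langle w,Y\rangle=(w^{\rm tan})^\flat(Y)=P_{\rm out}(Y)$, and likewise $\langle v,X\rangle=P_{\rm in}(X)$. This yields the formula.

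The main obstacle is regularity. We must know that the exit time $T(s)$ and exit point $y(s)$ depend smoothly on $s$, so that the differentiation above is legitimate. In the lemma's setting the family of exit data is given as smooth: this is built into the hypothesis, or it follows from the transversality/implicit function argument in the proof of Lemma \ref{lemma4.7}, since exits are non-glancing by Lemma \ref{lemma4.6}. I would simply invoke that. One further point needs care. Only the pullback of $\omega$ vanishes in Theorem \ref{theorem4.1}, whereas the normal component $\omega(\nu)$ would contribute $-\omega(\nu)\,\langle J,\nu\rangle$ terms. Since $J$ at the endpoints differs from a tangent vector only by the $T'(0)w$ correction, I would rely on the stated assumption $\omega|_{\partial M}=0$, i.e.\ that the full covector vanishes. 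If only the pullback is available, the leftover term $T'(0)\,\omega_y(w)$ would have to be absorbed by a gauge change $\omega\mapsto\omega+df$ with $f|_{\partial M}=0$, which leaves $\Omega$ and chord actions unchanged.
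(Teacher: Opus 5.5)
Your argument is correct and is essentially the paper's proof: Euler--Lagrange plus integration by parts, the exit correction $J(T)=Y-T'(0)\,w$, and the vanishing of the $T'(0)$ coefficient on the energy level $E=\tfrac12$. One correction to your last paragraph: there is no leftover $T'(0)\,\omega_y(w)$ when only the pullback of $\omega$ vanishes. The coefficient of $T'(0)$ is $L(y,w)+\tfrac12-p(w)=\tfrac12-E$ identically, because the $-\omega_y(w)$ in $L$ cancels against the one in $p(w)=|w|^2-\omega_y(w)$. So, as in the paper, $\iota^*\omega=0$ alone suffices to kill the remaining terms $\omega_x(X)$ and $\omega_y(Y)$, and no gauge change is needed (though yours would also work).
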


\begin{proof} We abbreviate $\gamma_{s}=\gamma_{x(s),v(s)}$ and ${\mathcal A}(s):={\mathcal A}(\gamma_{x(s),v(s)})$. Then
\begin{equation}
{\mathcal A}(s) =\int_0^{T(s)} \left( L(\gamma_s, \dot{\gamma}_s) + \frac{1}{2} \right) dt.
\end{equation}
Define the \emph{momentum} by
\begin{equation}
p = \frac{\partial L}{\partial v} = v^\flat - \omega .
\end{equation}
Let $V(t) = \frac{\partial \gamma_s}{\partial s}|_{s=0}$ be the variation vector field along $\gamma_0$. Differentiating ${\mathcal A}(s)$ yields:
\begin{equation}
    {\mathcal A}'(0) = \left( L_{\rm out} + \frac{1}{2} \right) T'(0) + \int_0^{T(0)} \left( \frac{\partial L}{\partial x}(V) + \frac{\partial L}{\partial v} (\dot{V}) \right) dt .
\end{equation}
Because $\gamma_0$ satisfies the Euler-Lagrange equations, integration by parts reduces the integral to the boundary terms $[p (V)]_0^{T_0}$.

Define the \emph{exit curve} to be
\begin{equation}
y(s) = \gamma_s(T(s)).
\end{equation}
Hence the base variation at the exit is $Y = y'(0) = V(T_0) + \dot{\gamma}_0(T_0) T'(0)$, which gives $V(T_0) = Y - \dot{\gamma}_0(T_0) T'(0)$. The initial base variation is $V(0) = X$.

Substituting these into the boundary terms yields
\begin{equation}
    {\mathcal A}'(0) = \left(  L_{\rm out} + \frac{1}{2} -p_{\rm out}(\dot{\gamma}_{\rm out}) \right) T'(0) +p_{\rm out}(Y) - p_{\rm in}(X) .
\end{equation}
The coefficient of $T'(0)$ is exactly $E - 1/2$. Because our trajectories lie on the energy level $E = 1/2$, this time-variation term vanishes.

Finally, because $\omega|_{T\partial M} = 0$ and the base variations $X$ and $Y$ obey $X, Y \in T\partial M$, the magnetic term in the momentum evaluates to zero. The evaluations reduce to the purely Riemannian tangential momenta $p_{\rm out}(Y) = \langle \dot{\gamma}_{\rm out}, Y \rangle_g = P_{\rm out}(Y)$ and $p_{\rm in}(X) = \langle \dot{\gamma}_{\rm in}, X \rangle_g = P_{\rm in}(X)$.
\end{proof}

\subsection{Auxiliary lemmas}\label{section4.5}
Our first objective is to show that under the non-trapping condition and assuming that every magnetic chord has Ma\~n\'e action $\pi$, the scattering relation ${\mathcal S}$ is identical to that of the standard hemisphere. For this we use a symplectic argument.

\begin{remark} {\rm A reader who prefers a shorter proof of the main rigidity result and is willing to assume the non-trapping property and antipodal focusing may skip Lemmas \ref{lem:cone} and \ref{lem:non_trapping} below and proceed directly to the second paragraph of the proof of Proposition \ref{prop:rs}.}
\end{remark}

Recall that $S_x^+M$ denotes the open hemisphere of inward-pointing unit tangent vectors at $x\in\partial M$ and that $B^*\partial M$ denotes the open unit cotangent ball bundle. For a fixed $x \in \partial M$, let $\Phi_x : S_x^+ M \to B^*\partial M$ be the map $\Phi_x(v) := (y(v), P_{\rm out}(v))$.

\begin{lemma} \label{lem:cone}
Assume that each magnetic chord has Ma\~n\'e action $\pi$, that the magnetic flow is non-trapping, and that $\omega\vert_{\partial M}=0$.
The image $\Lambda_x = \Phi_x(S_x^+ M)$ is a smooth conical Lagrangian submanifold in $T^*\partial M$. (By conical we mean a union of open radial segments in $B^*\partial M$, i.e., sets of the form $\{ t\,p : t \in (0,\epsilon) \}$ where $y\in \partial M$ and $0\neq p\in T_{y}^*\partial M$.)\label{lemma4.10}
\end{lemma}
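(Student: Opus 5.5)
The plan is to read off all three properties (Lagrangian, smooth, conical) from the first variation formula, Lemma \ref{lem:variation}, applied to families of chords that start at the fixed point $x$.

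\textbf{Lagrangian.} Take any smooth curve $s\mapsto v(s)$ in $S_x^+M$, keeping the base point $x(s)\equiv x$, so that $X=0$. By hypothesis every chord has Ma\~n\'e action $\pi$, and on unit speed curves this equals the free-time action ${\mathcal A}$. So $s\mapsto {\mathcal A}(\gamma_{x,v(s)})$ is constant. Lemma \ref{lem:variation} then gives
\[
0=P_{\rm out}(Y)-P_{\rm in}(0)=P_{\rm out}(s)\big(y'(s)\big).
\]
In words, the canonical (Liouville) one-form $\lambda=p\,dy$ on $T^*\partial M$ pulls back to zero under $\Phi_x$. Hence $\Phi_x^*d\lambda=0$, so the image is isotropic.

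\textbf{Smoothness and dimension.} The map $\Phi_x$ factors as
\[
v\;\mapsto\;\mathcal S(x,v)\;\mapsto\;\psi_-(\mathcal S(x,v)).
\]
Here $\mathcal S$ is a diffeomorphism by Lemma \ref{lem:scattering}, which applies because $\Omega|_{\partial M}=0$ by Lemma \ref{lemma4.5}. The map $\psi_-$ is a diffeomorphism onto $B^*\partial M$. Restricting to the $(n-1)$-dimensional fibre $S^+_xM$, the map $\Phi_x$ is therefore an injective immersion into a $2(n-1)$-dimensional symplectic manifold. Its image is isotropic of half dimension, hence Lagrangian. The one point to check is that the image is embedded and not merely immersed. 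I expect properness of $\Phi_x$ onto its image to follow from the diffeomorphism property of $\mathcal S$, since it maps the closed submanifold $\{x\}\times S^+_xM$ of $\partial_+SM$ onto a closed submanifold of $\partial_-SM$.

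\textbf{Conical.} This is the step I expect to be the main obstacle. For an exact Lagrangian, $\lambda|_{\Lambda_x}=0$ is exactly the condition that the Liouville vector field $p\,\partial_p$ (the radial field) is symplectically orthogonal to $T\Lambda_x$. Since $\Lambda_x$ is Lagrangian, this radial field must be tangent to $\Lambda_x$ wherever it is nonzero. So $\Lambda_x$ is invariant under the radial flow $p\mapsto e^{\tau}p$ as long as it stays inside the image, and is therefore locally a union of radial segments.

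To get the precise description ``segments $\{tp: t\in(0,\epsilon)\}$'', I will analyse glancing limits. As $v$ tends to a glancing vector at $x$, Lemma \ref{lem:trapped} (together with continuity of the flow and transversality) should force the chords to become short, so that $y(v)\to x$ and the exit covector $P_{\rm out}$ tends to the glancing covector. The hard part is showing that each radial integral curve in $\Lambda_x$ actually extends down to the zero section, so that the segments start at $t=0$ rather than at some positive value. I would try to argue this by following the radial flow inside $\Lambda_x$ and using that $\Lambda_x$ is relatively closed in $B^*\partial M$ away from its boundary. The only boundary behaviour available comes from $|p|\to1$ (the glancing limit just described) and from $p\to0$, which corresponds to normal exit. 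This should force each orbit to be a full open segment in $B^*_y\partial M$ ending at the origin.
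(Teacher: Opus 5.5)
Your smoothness, isotropy and Liouville-field steps are exactly the paper's proof. The paper writes $\Phi_x=\psi_-\circ\mathcal S\circ\iota_x$ to get an embedded $(n-1)$-dimensional submanifold. It applies Lemma \ref{lem:variation} with $X=0$ to get $\Phi_x^*\lambda=0$, concludes Lagrangian by dimension count, and notes that $-d\lambda(V,Z)=-\lambda(Z)=0$ forces $V=p\,\partial_p$ to be tangent to $\Lambda_x$. At that point it simply declares $\Lambda_x$ conical.

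\textbf{The glancing step is false.} The glancing analysis you sketch for the remaining step rests on a false claim. Near-glancing chords do not become short, and $y(v)$ does not tend to $x$. If the exit times satisfied $T(v)\to 0$, then $A(\gamma_{x,v})\le K\,T(v)\to 0$, contradicting $A=\pi$; this is precisely the contradiction that drives Lemma \ref{lemma4.5}. Since $\partial M$ is totally geodesic and $\Omega|_{\partial M}=0$, a near-glancing chord shadows a great circle of $\partial M$ (Lemma \ref{lem:trapped}). Lemma \ref{lem:extension} then shows that its exit time tends to $\pi$, its exit point tends to $-x$, and $|P_{\rm out}|\to 1$. Short chords with $y\to x$ are the strictly convex picture, which these hypotheses exclude, so that part of your plan should be discarded.

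\textbf{The glancing analysis is not needed.} As you observed, $\{x\}\times S_x^+M$ is closed in $\partial_+SM$, and $\psi_-\circ\mathcal S$ is a diffeomorphism of $\partial_+SM$ onto $B^*\partial M$ (Lemma \ref{lem:scattering}). Hence $\Lambda_x$ is a closed subset of $B^*\partial M$. Given $0\neq p_0\in\Lambda_x$, let $I=\{t\in(0,1/|p_0|):\ tp_0\in\Lambda_x\}$. Then:
\begin{itemize}
\item $1\in I$;
\item $I$ is closed in $(0,1/|p_0|)$, because $\Lambda_x$ is closed in $B^*\partial M$;
\item $I$ is open, because $V$ is nonvanishing off the zero section and tangent to the embedded submanifold $\Lambda_x$. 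By uniqueness, its integral curve $\tau\mapsto e^{\tau}tp_0$ stays in $\Lambda_x$ for small $|\tau|$.
\end{itemize}
Hence $I=(0,1/|p_0|)$, i.e.\ every radial segment in $\Lambda_x$ runs all the way from the zero section to the unit sphere. This open--closed argument uses no boundary dynamics at all. It gives the conical property in the sense of the statement, and it supplies a step that the paper's proof leaves implicit.
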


Before proceeding with the proof, we first parse this lemma and its proof. As we vary the initial tangent vector $v$ at the fixed ``entry point'' $x\in\partial M$, the lemma says that the exit data $(y(v),w(v))$ define a smooth submanifold $\Lambda_x$ of dimension $n-1$ in $T^*\partial M$ parametrized (partly) by a coordinate $t\in (0,1)$ (called ``radial'' here) which is in fact the magnitude of the projection of the (unit) exit cotangent vector into $T^*\partial M$. Though we do not yet know that this submanifold is a fibre above a single exit point (which we will show in a subsequent lemma), it is a Lagrangian submanifold.

The first step is to consider a geodesic variation issuing from a fixed $x\in \partial M$. The Euler-Lagrange equations (the magnetic geodesic equations) for the free-time action guarantee that the bulk term in the variation vanishes, and since the variation issues from a single point the boundary term is entirely due to the ``exit'' point. This term must therefore vanish as well, since the action is constant along the variation. This implies that the projection of the exit velocity into the tangent space $T_y\partial M$ at the exit point $y$ is orthogonal in $T_y\partial M$ to the variation vector field at the exit point $y$ including the possibility that the variation vector field vanishes at $y$. This is true for every geodesic in the variation, not just a base geodesic, since the action is constant (not merely stationary at a base point) with respect to the variation. That means that $v^{
\flat}(y'(0))$ vanishes for all variations $y'(0)$. On $\partial M$, we have $p=v^{\flat}$ and in coordinates $y'(0)=\frac{dq}{ds}$ so $\lambda=pdq$ vanishes at exit points. By simple manipulations this allows us to argue that the submanifold is Lagrangian and is conical in the above sense. Single fibres in the cotangent bundle have this structure. In subsequent lemmas we will prove that the variation vector field does vanish at $y$, so that $\Lambda_x$ will be confined to the fibre above $y$.

\begin{proof}
Recall the scattering relation $\mathcal S:=\mathcal{S}_{g,\Omega}: \partial_+ SM \to \partial_- SM$ defined in Lemma \ref{lemma4.7}, which maps $(x,v)\in\partial_+SM$ to the exit position and velocity of $\gamma_{x,v}$. Then $\Phi_x= \psi_- \circ \mathcal{S} \circ \iota_x$, where $\iota_x : S_x^+ M \hookrightarrow \partial_+ SM$ is the natural fibre inclusion. Because $\iota_x$ is a smooth embedding and both $\mathcal{S}$ and $\psi_-$ are diffeomorphisms, $\Phi_x$ is a smooth embedding. Therefore, its differential $d\Phi_x$ has full rank $n-1$ everywhere, making $\Lambda_x$ a smooth, embedded submanifold of dimension $n-1$.

Let $s \mapsto v(s)$ be a 1-parameter family of velocities in $S_x^+ M$. These initial velocities serve as initial data for a smooth one-parameter family of magnetic geodesics $\gamma_{x,v^{\flat}(s)}$. Define $A(s):=A(\gamma_{x,v^{\flat}(s)})$. By hypothesis, $A(s) \equiv \pi$, so $\frac{d}{ds} A(s) = 0$.
Since $\omega\vert_{\partial M}=0$, Lemma \ref{lem:variation} forces $P_{\rm out}(Y)-P_{\rm in}(X) = 0$, but $P_{\rm in}(X) = 0$ since the initial endpoint $x(0)=x$ is fixed so then $P_{\rm out}(Y)=0$. This means the pullback of the canonical Liouville 1-form $\lambda := p\,dq$ vanishes: $\Phi_x^*\lambda = 0$. Consequently, the pullback of the symplectic form vanishes: $\Phi_x^*(-d\lambda) = 0$.

Because $\Lambda_x$ is an $(n-1)$-dimensional isotropic submanifold, it is Lagrangian. For any tangent vector $Z \in T\Lambda_x$, the radial Liouville vector field $V = p \frac{\partial}{\partial p}$ satisfies $-d\lambda(V, Z) = -\lambda(Z) = 0$. Because $\Lambda_x$ is Lagrangian, its tangent space is its own symplectic orthogonal, forcing $V \in T\Lambda_x$. Since $V$ is tangent to $\Lambda_x$ everywhere, $\Lambda_x$ is conical.
\end{proof}

Next we show that $\Phi_x$ extends continuously to the boundary. The idea is that the $t\nearrow 1$ limit in the proof above implies that the limiting covector lies in $T_{-x}^*\partial M$ and is tangent to a geodesic $\gamma:[0,T]\to \partial M$, $T<\infty$.

\begin{lemma} \label{lem:extension}
Say that the assumptions of Lemma \ref{lemma4.10} hold and that $\partial M$ is embedded as a totally geodesic hypersurface in $M$. The map $\Phi_x$ extends continuously to the closed hemisphere $\overline{S_x^+ M}$, mapping  $\partial S_x^+ M$ onto $S_{-x}^*\partial M$.
\label{lemma4.11}
\end{lemma}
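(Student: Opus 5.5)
The natural extension at a glancing vector $v\in\partial S_x^+M$ is $\Phi_x(v):=(\gamma_{x,v}(\pi),\dot\gamma_{x,v}(\pi)^\flat)$, where $\gamma_{x,v}$ is the trajectory started at $(x,v)$. By Lemma~\ref{lemma4.6} this trajectory is a unit-speed great circle of the round boundary sphere. It therefore reaches the antipode $-x$ at time $\pi$, with tangential velocity of norm one, so $\Phi_x(v)\in S^*_{-x}\partial M$. As $v$ ranges over the equator $\partial S_x^+M\cong\mathbb S^{n-2}$, the map $v\mapsto \dot\gamma_{x,v}(\pi)$ is (minus) parallel transport along great circles. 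This is a bijection onto $S_{-x}\partial M$, which gives the claimed surjection. It remains to show continuity: if $v_k\in S_x^+M$ with $v_k\to v$ glancing, then $(y(v_k),w(v_k)^{\rm tan})\to \Phi_x(v)$.

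The plan is a continuous-dependence argument for the magnetic flow on the compact manifold $SM$, with the time variable controlled by the action identity. Since $\omega$ vanishes on $T\partial M$, along $\gamma_{x,v}$ we have $A=\int_0^\pi(1-\omega(\dot\gamma))\,dt=\pi$. By smooth dependence on initial data, $\gamma_{x,v_k}\to\gamma_{x,v}$ in $C^1$ uniformly on $[0,\pi+\delta]$. I would then carry out three steps.

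\emph{Step 1: the exit times satisfy $\tau(v_k)\ge \pi-\epsilon$ for large $k$.} Suppose instead that $\tau(v_k)\to\tau_*<\pi$ along a subsequence. Because the trajectories converge uniformly, the limiting chord must end on the boundary at time $\tau_*$. The limiting action is $\int_0^{\tau_*}(1-\omega(\dot\gamma))\,dt=\tau_*<\pi$, since $\omega=0$ along the limit curve. This contradicts $A(\gamma_{x,v_k})=\pi$, because the action depends continuously on the curve and on the endpoint time.

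\emph{Step 2: the exit times satisfy $\tau(v_k)\le\pi+\epsilon$.} This is the main obstacle. The near-glancing chords may be only barely inside $M$ and need not exit near time $\pi$. Here I would use hypothesis (iv) (or the non-trapping assumption together with Lemma~\ref{lemma4.10}). Over a time interval $[0,T]$, the action of a unit-speed chord is at least $(1-\max|\omega|)\,T$, so exit times are uniformly bounded: $\tau(v_k)\le \pi/(1-\max|\omega|)$. Passing to a subsequence with $\tau(v_k)\to\tau_*$, the limiting exit state is a point of the trajectory of $(x,v)$, which stays on $\partial M$. Its action is exactly $\tau_*$, and it must equal $\pi$ by continuity of $A$. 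Hence $\tau_*=\pi$.

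\emph{Step 3: convergence of the exit data.} By the $C^1$ convergence and Step 2, $y(v_k)\to\gamma_{x,v}(\pi)=-x$ and $w(v_k)\to\dot\gamma_{x,v}(\pi)$, which is tangent to $\partial M$. Therefore $P_{\rm out}(v_k)=w(v_k)^{{\rm tan}\,\flat}\to\dot\gamma_{x,v}(\pi)^\flat\in S^*_{-x}\partial M$. The subsequence limit is independent of the subsequence, so the full sequence converges, and $\Phi_x$ extends continuously.

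One subtlety remains. The action identity in Steps 1 and 2 uses continuity of $\int_0^{\tau}(1-\omega(\dot\gamma))\,dt$ in $(v,\tau)$, which is immediate from uniform $C^1$ convergence on compact time intervals.
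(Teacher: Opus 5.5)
\textbf{Gap in Step 2.} Your surjectivity argument, Step 1 and Step 3 are fine and agree with the paper, which also shows that every subsequential limit $T^*$ of the exit times satisfies $T^*-\int_0^{T^*}\omega(\dot\gamma_{x,v})\,dt=\pi$ with the integral vanishing, and then uses Lemma~\ref{lemma4.6} to land at $-x$. The problem is Step 2, which you correctly identify as the crux. Your uniform bound $\tau(v_k)\le \pi/(1-\max|\omega|)$ uses hypothesis (iv) of Theorem~\ref{theorem4.1}. That hypothesis is not among the assumptions of this lemma, which are only: constant action $\pi$, non-trapping, $\omega|_{\partial M}=0$, and a totally geodesic boundary.

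This is not a technicality. The lemma feeds into Proposition~\ref{proposition4.12}, which the proof of Remark~\ref{remark4.2} relies on, and that is exactly the setting where (iv) is dropped in favour of non-trapping. Without (iv), the integrand $1-\omega(\dot\gamma)$ can be negative in the interior, so the identity $A=\pi$ gives no a priori control of $\tau$. Non-trapping only says that each $\tau(v_k)$ is finite. It does not say that $\tau$ stays bounded as $v_k$ approaches the glancing set, where near-glancing chords shadow a boundary great circle. Your parenthetical ``or non-trapping together with Lemma~\ref{lemma4.10}'' does not supply this either: the conical Lagrangian structure of $\Lambda_x$ says nothing about exit times.

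\textbf{The missing idea.} The paper bounds the exit times by an averaging argument. Suppose $T_n:=\tau(v_n)\to\infty$ along a subsequence. Define probability measures $\mu_n$ on $SM$ by $\int f\,d\mu_n=T_n^{-1}\int_0^{T_n}f(\dot\gamma_n(t))\,dt$, and pass to a weak-* limit $\mu$; this limit is invariant under the magnetic flow. By non-trapping and Lemma~\ref{lemma4.6}, an invariant probability measure is supported in the trapped set, hence in $S\partial M$, where $\omega=0$. Therefore $\int(1-\omega)\,d\mu=1$. On the other hand, $\int(1-\omega)\,d\mu_n=A(\gamma_n)/T_n=\pi/T_n\to 0$, which is a contradiction.

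Once the exit times are bounded this way, your subsequence argument in Step 2 goes through verbatim. Under (iv) your shortcut is valid and simpler, and it suffices for Theorem~\ref{theorem4.1} itself. But it proves a weaker lemma than the one stated.
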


\begin{proof}
Let $v \in \partial S_x^+ M$ be tangent to $\partial M$; i.e., a ``glancing'' vector. Let $\{v_n\} \subset S_x^+ M$ be a sequence of strictly inward-pointing vectors such that $v_n \to v$. Let $T_n$ be the boundary exit time of the corresponding magnetic geodesic $\gamma_n(t) = \gamma_{x, v_n}(t)$. We claim that the sequence $T_{n}$ is bounded.

Suppose for the sake of contradiction that the sequence $T_{n}$ is not bounded, so without loss of generality assume $T_{n}\to\infty$.

For each $n$ we define a probability measure $\mu_n$ on $SM$ by setting
\begin{equation}
    \int_{SM} f \, d\mu_n = \frac{1}{T_n} \int_0^{T_n} f(\dot{\gamma}_n(t)) \, dt.
\end{equation}
Since the space of probability measures on the compact metric space $SM$ is weak-* compact, we can extract a subsequence (which we still denote by $\mu_n$) that converges in the weak-* topology to a limit probability measure $\mu$. Because $T_n \to \infty$, a standard argument shows that the weak-* limit measure $\mu$ is invariant under the magnetic flow.

By the non-trapping hypothesis, every trajectory that enters the interior of $M$ exits in finite time. The only trapped trajectories are those confined to the totally geodesic boundary $\partial M$, where both the normal acceleration and $\Omega$ vanish. Because $\mu$ is an invariant probability measure, its support must be contained within the trapped set of the dynamics. Consequently, $\text{supp}(\mu) \subseteq S\partial M$.
We now evaluate the weak-* limit using the continuous function $L(x,v) = 1 - \omega_{x}(v)$ on $SM$. By definition of weak-* convergence we have
\begin{equation} \label{eq:weak_limit}
    \lim_{n \to \infty} \int_{SM} (1 - \omega) \, d\mu_n = \int_{SM} (1 - \omega) \, d\mu.
\end{equation}
We evaluate the left-hand side of \eqref{eq:weak_limit} using the constant Ma\~n\'e action hypothesis, $A(\gamma_n) = \pi$:
\begin{equation}\label{eq:aux}
    \int_{SM} (1 - \omega) \, d\mu_n = \frac{1}{T_n} \int_0^{T_n} \left( 1 - \omega(\dot{\gamma}_n(t)) \right) dt = \frac{A(\gamma_n)}{T_n} = \frac{\pi}{T_n} .
\end{equation}
Taking the limit as $T_n \to \infty$, we see that the left-hand side of \eqref{eq:aux} tends to $0$.
Next, we evaluate the right-hand side of \eqref{eq:weak_limit}. Because the support of $\mu$ is contained in $S(\partial M)$, we only evaluate the integrand on boundary tangent vectors. Since $\omega|_{T\partial M} \equiv 0$ we see that
\begin{equation}
    \int_{SM} (1 - \omega) \, d\mu = \int_{S(\partial M)} (1 - 0) \, d\mu = \int_{S(\partial M)} 1 \, d\mu = 1 ,
\end{equation}
thus giving a contradiction.

Now consider a subsequence of $T_{n}$ such that $T_{n_k} \to T^*$. Then
\begin{equation}
    \lim_{k \to \infty} \int_0^{T_{n_k}} \omega(\dot{\gamma}_{n_k}(t)) \, dt = \int_0^{T^*} \omega(\dot{\gamma}_v(t)) \, dt=0.
\end{equation}
Taking the limit of the action equation for the subsequence yields
\begin{equation}
    \lim_{k \to \infty} \left( T_{n_k} - \int_0^{T_{n_k}} \omega(\dot{\gamma}_{n_k}(t)) \, dt \right) = T^* - 0 = \pi .
\end{equation}
Thus, $T^* = \pi$. Because every convergent subsequence of the bounded sequence $\{T_n\}$ must converge to the same limit $\pi$, the sequence itself converges: $\lim_{n \to \infty} T_n = \pi$.

Because both the trajectories and their exit times converge, the final exit states converge: $\gamma_{x,v_n}(T_n) \to \gamma_{x,v}(\pi)$ and $\dot{\gamma}_{x,v_n}(T_n) \to \dot{\gamma}_{x,v}(\pi)$. Because Lemma \ref{lem:trapped} ensures $\gamma_{x,v}$ is a unit-speed geodesic in the round sphere $\partial M$, we have $\gamma_{x,v}(\pi)=-x$.
Since the initial glancing vectors $v \in \partial S_x^+ M$ sweep out all directions in the unit tangent sphere of $\partial M$ at $x$, the exit momenta sweep out the unit cotangent sphere at $-x$. Thus, $\Phi_x$ extends continuously to the closure, mapping $\partial S_x^+ M$ onto $S_{-x}^{*}\partial M$.
\end{proof}

\begin{proposition}[Rigidity of the scattering relation]\label{proposition4.12}
Assume the magnetic flow is non-trapping and each magnetic chord has Ma\~n\'e action $\pi$.
Then the endpoint map is constant, $y(v) \equiv -x$, and the scattering relation $\mathcal{S}_{g,\Omega}$ matches that of the standard hemisphere $\mathcal{S}_{{\rm can}}$.\label{prop:rs}
\end{proposition}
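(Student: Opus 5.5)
The plan is to combine the conical Lagrangian structure of Lemma \ref{lem:cone} with the boundary behaviour of Lemma \ref{lem:extension}. This forces the image $\Lambda_x=\Phi_x(S_x^+M)$ to lie in a single cotangent fibre, namely the one over $-x$. Once the exit point is known, the exit momentum will follow from a dimension and degree argument.

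\emph{Step 1: the exit point map is constant.} Fix $x\in\partial M$. By Lemma \ref{lem:cone}, $\Lambda_x$ is a smooth embedded conical Lagrangian submanifold of $B^*\partial M$ of dimension $n-1$, and the Liouville form vanishes on it. By Lemma \ref{lem:extension}, its closure meets the unit cosphere bundle exactly in $S^*_{-x}\partial M$.

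I will use the radial structure. For any $v\in S_x^+M$, write $\Phi_x(v)=(y,p)$ with $p\neq 0$. (If $p=0$, the chord exits normally, and I treat this as a single point handled by continuity.) The radial segment $\{(y,tp)\}$ through $\Phi_x(v)$ stays in $\Lambda_x$ for $t$ in a maximal open interval. Push $t$ upward toward $1/|p|$, where $|tp|\to 1$. Since the base point $y$ is constant along this segment, the limit point lies in $\overline{\Lambda_x}\cap S^*\partial M$, provided one controls that the corresponding $v$ tend to $\partial S_x^+M$. That set is $S^*_{-x}\partial M$, so $y=-x$.

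To make the segment argument rigorous, I would use the following facts. $\Phi_x$ extends to a continuous injective map of the closed hemisphere. The radial flow on $\Lambda_x$ pulls back to a complete-in-$(0,1)$ flow on $S_x^+M$. Compactness of $\overline{S_x^+M}$ then shows that each maximal radial curve exits through $\partial S_x^+M$. Hence $y(v)=-x$ for every $v$, so $\Lambda_x\subset B^*_{-x}\partial M$ is an open subset of the fibre. I expect this to be the main obstacle: it requires ruling out that the radial curve leaves $\Lambda_x$ at an interior point of the ball, and a clean invariance-of-domain argument on the closed disc seems the most direct route.

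\emph{Step 2: the scattering relation is that of the hemisphere.} Now $\Phi_x$ restricted to the fibre part is a continuous injective map from the closed disc $\overline{S_x^+M}$ into the closed unit ball of $T^*_{-x}\partial M$. It sends the boundary sphere onto the boundary sphere $S^*_{-x}\partial M$, and it is a local diffeomorphism in the interior. By invariance of domain and a degree argument, it is a homeomorphism onto the closed ball. So every exit covector at $-x$ occurs exactly once.

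It remains to identify $p_{\rm out}(v)$. For the standard hemisphere, the chord from $x$ with tangential part $v^{\rm tan}$ exits at $-x$ with $w^{\rm tan}$ equal to the parallel transport of $v^{\rm tan}$ along any great circle. For our $(g,\Omega)$, I would apply Lemma \ref{lem:variation} again, now varying the entry point $x$ as well. Since $y\equiv -x$ and $\mathcal A\equiv\pi$, we get
\[
0=P_{\rm out}(d(-\id)X)-P_{\rm in}(X)
\]
for all $X\in T_x\partial M$. This says $P_{\rm out}=(-\id)^{*,-1}P_{\rm in}$, i.e.\ $w^{\rm tan}=d(-\id)_x v^{\rm tan}$, where $-\id$ is the antipodal isometry of the round sphere. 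This is exactly the hemisphere scattering relation. The normal component of $w$ is then determined by $|w|_g=1$ and the outward sign. The exit time is not part of $\mathcal S$, so this shows $\mathcal S_{g,\Omega}=\mathcal S_{\rm can}$.
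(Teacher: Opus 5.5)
Your proposal follows the paper's proof. First, the radial-ray argument combining Lemmas~\ref{lem:cone} and~\ref{lem:extension} forces $y(v)\equiv -x$; then Lemma~\ref{lem:variation} with $Y=d\mathfrak{A}(X)$, for the antipodal isometry $\mathfrak{A}$ of the boundary sphere, identifies $P_{\rm out}$. Your escape-lemma argument for the pulled-back Liouville flow fills in the step the paper only asserts: the forward time is finite since $|p|<1$, the curve accumulates on $\partial S_x^+M$, and continuity of the extended $\Phi_x$ then places the limit in $S^*_{-x}\partial M$. So neither invariance of domain nor your Step~2 degree argument is needed.

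One side remark is wrong. In the model hemisphere the chord $t\mapsto\cos t\,x+\sin t\,v$ exits with $w=-v$, so $w^{\rm tan}=-v^{\rm tan}=d\mathfrak{A}(v^{\rm tan})$. This is not the parallel transport of $v^{\rm tan}$ along a great circle, which depends on the circle chosen. The identity you actually derive is nonetheless the correct one.
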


\begin{proof}
Every radial ray $R$ in the cone $\Lambda_x$ must eventually approach $|p| = 1$, reaching the boundary $\partial \Lambda_x$ within the closed ball bundle $\overline{B^*\partial M}$. By Lemma \ref{lem:extension}, this boundary is exactly $\partial \Lambda_x = \Phi_x(\partial S_x^+ M) = S_{-x}^*\partial M$.
Because a radial ray $R$ generated by the Liouville vector field lies entirely within a single cotangent fibre over its exit point $y$, it cannot intersect the target fibre $T_{-x}^*\partial M$ unless its base coordinate is identically $y = -x$. Therefore, $y(v) \equiv -x$ for all $v \in S_x^+ M$.

Because this holds for every boundary point $x \in \partial M$, the base mapping is the antipodal map $y = \mathfrak{A}(x) = -x$.
We now evaluate arbitrary 1-parameter families where both $x$ and $v$ vary. Because the action is identically $\pi$, Lemma \ref{lem:variation} forces $P_{\rm out}(Y) - P_{\rm in}(X) = 0$.
Since $y(s) = \mathfrak{A}(x(s))$, the base variations are coupled by the pushforward: $Y = d\mathfrak{A}(X)$.
Substituting this yields $P_{\rm out}(d\mathfrak{A}(X)) = P_{\rm in}(X)$, which means $d\mathfrak{A}^* P_{\rm out} = P_{\rm in}$. This forces the exit momentum to be the standard cotangent lift: $P_{\rm out} = (d\mathfrak{A}^*)^{-1} P_{\rm in}$.
Because both the base map and the momentum map are identical to those of the standard round hemisphere, we conclude that $\mathcal{S}_{g,\Omega} = \mathcal{S}_{{\rm can}}$ everywhere.
\end{proof}

Next we show that the non-trapping property can be derived from the assumption that magnetic geodesics between boundary points have constant action together with the bound
\begin{equation}
\max_{x\in M}|\omega_{x}|_{g}<1.
\end{equation}

\begin{lemma} \label{lem:non_trapping}
Assume the boundary $\partial M$ is totally geodesic and $\Omega_x = 0$ for all $x \in \partial M$. Suppose the magnetic potential satisfies the bound $\max_{x \in M} |\omega_x|_g < 1$. If every magnetic chord connecting boundary points has Ma\~n\'e action $\pi$, then the magnetic flow is non-trapping. \label{lemma4.13}
\end{lemma}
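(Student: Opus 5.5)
The plan is to use the bound on $\omega$ to turn the constant-action hypothesis into a uniform bound on exit times. Then a connectedness argument on $\partial_+SM$ shows that no inward-pointing trajectory can fail to exit. Set $\delta:=1-\max_{x\in M}|\omega_x|_g>0$. For any magnetic chord $\gamma:[0,T]\to M$ of unit speed we have $1-\omega(\dot\gamma)\ge\delta$ pointwise. Hence $\pi=A(\gamma)\ge \delta T$, so every chord has exit time $T\le \pi/\delta$. Let $U\subset\partial_+SM$ be the set of entry states $(x,v)$ whose trajectory $\gamma_{x,v}$ returns to $\partial M$ in finite time. Non-trapping means $U=\partial_+SM$; trajectories through interior points are handled by flowing backward and forward to the boundary, as discussed at the end. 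We show that $U$ is nonempty, open and closed. Since $\partial_+SM$ is connected (it fibres over $\mathbb S^{n-1}$ with open hemispheres as fibres), this gives $U=\partial_+SM$.

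\emph{Openness.} Let $(x,v)\in U$ exit at $(y,w)$. The transversality argument in the proof of Lemma \ref{lemma4.7} applies verbatim: it uses only Lemma \ref{lem:trapped}, which holds under our hypotheses, and not non-trapping. So $w$ is strictly outward-pointing. The implicit function theorem applied to $F(x,v,t)=\rho(\pi\circ\phi^t(x,v))$ then shows that nearby entry states also reach $\partial M$ at nearby times. Hence $U$ is open.

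\emph{Closedness.} Take $(x_k,v_k)\in U$ converging to $(x,v)\in\partial_+SM$, with exit times $T_k\le\pi/\delta$. The $T_k$ are also bounded below by some $c>0$. Indeed, $\langle v,\nu\rangle>0$, so in boundary normal coordinates the normal component $x^\perp$ has derivative bounded below on a uniform initial time interval for all $k$ large, and the trajectories stay in the interior there. Pass to a subsequence with $T_k\to T^*\in[c,\pi/\delta]$. By continuity of the flow, $\gamma_{x,v}(T^*)\in\partial M$. So $\gamma_{x,v}$ meets $\partial M$ at some first time $\tau\in(0,T^*]$. It cannot do so tangentially, since by Lemma \ref{lem:trapped} and uniqueness the whole trajectory would then lie in $\partial M$, contradicting $\langle v,\nu\rangle>0$. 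Hence $(x,v)\in U$, and $U$ is closed.

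\emph{Nonemptiness.} This is the step I expect to be the main obstacle, because the action hypothesis is vacuous if no chords exist. I would use the fact that the magnetic flow preserves the Liouville measure on $SM$, which has finite total volume. Suppose a set $B\subset\partial_+SM$ of positive measure for the flux measure $\langle v,\nu\rangle\,d\sigma$ were forward-trapped. Then the flow-out $\{\phi^t(x,v): (x,v)\in B,\ t>0\}$ would be an injectively parametrised region of $SM$. By Santaló's formula its volume equals $\int_B \tau\,\langle v,\nu\rangle\,d\sigma=\infty$, which is impossible. So almost every entry state exits and $U\neq\emptyset$; in fact this argument alone gives full measure, and the topological argument above upgrades this to all of $\partial_+SM$.

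\emph{Interior points.} It remains to treat trajectories through interior points, which must reach $\partial M$ in both time directions. The same Poincaré-recurrence/volume argument, combined with the uniform bound on chord lengths, shows that no orbit can remain in the interior for all forward time. A trapped orbit would have an $\omega$-limit set carrying an invariant measure supported away from the glancing set. The compactness and weak-$*$ limit argument of Lemma \ref{lem:extension}, together with closedness of $U$ applied to approximating states, then yields a contradiction; I would flesh out this step last.
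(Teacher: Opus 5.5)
\textbf{Gap: orbits that never meet $\partial M$.} Your openness, closedness and nonemptiness arguments on $\partial_+SM$ are sound. They correctly show that every entry state exits within time $\pi/\delta$. The gap is the final ``interior points'' step, and it is not a side issue.

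What still has to be excluded is an orbit lying entirely in the interior, such as a closed magnetic geodesic or a compact invariant set in $M^\circ$. Knowing $U=\partial_+SM$ says nothing about such orbits, because they are not in the flow-out of $\partial_+SM$. The tools you propose do not reach them either:
\begin{itemize}
\item The Liouville/Santal\'o volume argument controls only the flow-out of entry states. It is blind to interior invariant sets, whether or not they have measure zero.
\item The weak-$*$ argument of Lemma~\ref{lem:extension} gets its contradiction by comparing $\pi/T_n\to 0$ along chords with $T_n\to\infty$. Your own bound $T\le\pi/\delta$ says no such chords exist. An invariant measure on an interior trapped set simply has $\int(1-\omega)\,d\mu\ge\delta$, with nothing to contradict.
\end{itemize}
This full form of non-trapping is exactly what is used later: in Lemma~\ref{lem:extension} (invariant measures supported in $S\partial M$), in Santal\'o's formula, and in the last step of the proof of Theorem~\ref{thm:MagBangert} (every $(x,v)\in SM$ lies on a chord).

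\textbf{The fix.} Run your open/closed argument on the connected interior $SM^\circ$ rather than on $\partial_+SM$. Take $\mathcal U\subset SM^\circ$ to be the set of interior states lying on some chord.
\begin{itemize}
\item Openness follows as you have it, from transversal entry and exit via Lemma~\ref{lem:trapped}.
\item For closedness, let $w_k=\phi^{t_k}(x_k,v_k)\to w\in SM^\circ$ with $0<t_k<T_k\le\pi/\delta$. The uniform bound lets you extract limits $(x_k,v_k)\to(x_\infty,v_\infty)\in\overline{\partial_+SM}$, $t_k\to t_\infty$ and $T_k\to T_\infty$, with $\phi^{t_\infty}(x_\infty,v_\infty)=w$.
\item A glancing limit is impossible by Lemma~\ref{lem:trapped}, since that orbit would stay in $\partial M$ yet it reaches the interior point $w$. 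Hence $w$ lies on a chord exiting by time $T_\infty$.
\end{itemize}
Since $\mathcal U$ is nonempty by your volume argument, connectedness of $SM^\circ$ gives $\mathcal U=SM^\circ$. This handles interior orbits directly, and your separate analysis on $\partial_+SM$ is then unnecessary. This is how the paper argues.
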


\begin{proof}
Let $SM^\circ = SM \setminus \partial SM$ denote the interior of the unit tangent bundle. We define a \textit{magnetic chord} as a trajectory $\gamma_{x,v}$ with initial state $(x,v) \in \partial_+ SM$ that reaches the boundary at a finite exit time $\tau(x,v) > 0$. Let $\mathcal{U} \subset SM^\circ$ be the set of all states that belong to a magnetic chord.

To prove the flow is non-trapping, we must show that every interior state lies on a chord, meaning $\mathcal{U} = SM^\circ$. Since $SM^\circ$ is connected, and assuming the existence of at least one chord so that $\mathcal{U} \neq \emptyset$, it suffices to prove that $\mathcal{U}$ is both open and closed in $SM^\circ$. The chord set is nonempty. Indeed, by the magnetic Santaló formula, or equivalently by the invariance of Liouville measure under the magnetic flow, almost every entry state has finite exit time. If a positive-measure set of entry states had infinite exit time, its flow-out for times $[0,T]$ would have volume growing linearly in $T$ inside the compact manifold $SM$, a contradiction. Hence at least one magnetic chord exists.

By the bound $\max_{x \in M} |\omega_x|_g < 1$ and the compactness of $M$, there exists a constant $\delta > 0$ such that for any state $(y,w) \in SM$
\begin{equation}
    |\omega_y(w)| \le 1 - \delta \implies 1 - \omega_y(w) \ge \delta > 0.
\end{equation}
For any magnetic chord $\gamma_{x,v}$, the Ma\~n\'e action is identically $\pi$. Since the trajectory is parameterized by unit speed ($|\dot{\gamma}_{x,v}|_g = 1$), its transit time $\tau(x,v)$ is bounded by
\begin{equation}
    \pi = A(\gamma_{x,v}) = \int_0^{\tau(x,v)} \left( 1 - \omega(\dot{\gamma}_{x,v}(t)) \right) dt \ge \int_0^{\tau(x,v)} \delta \, dt = \delta \tau(x,v).
\end{equation}
This establishes a uniform upper bound for the exit time of all magnetic geodesics: $\tau(x,v) \le \pi / \delta$.

Let $\phi^t: SM \to SM$ denote the magnetic flow, and let $w \in \mathcal{U}$. By definition, $w = \phi^{t_0}(x,v)$ for some $(x,v) \in \partial_+ SM$ and some interior evaluation time $t_0 \in (0, \tau(x,v))$. Because $\partial M$ is totally geodesic and $\Omega$ vanishes there, trajectories cannot be tangent to the boundary without remaining in it permanently (Lemma \ref{lem:trapped}). Since the chord enters and exits the interior, it must intersect the boundary transversally at both endpoints. By the smooth dependence of the flow on initial conditions, any state sufficiently close to $w$ in $SM^\circ$ will also belong to a trajectory that intersects the boundary transversally in finite time. Thus, $\mathcal{U}$ is open.

To show that $\mathcal{U}$ is closed, let $\{w_n\} \subset \mathcal{U}$ be a sequence  converging to a limit $w \in SM^\circ$. Each $w_n$ belongs to a chord generated by some $(x_n, v_n) \in \partial_+ SM$, evaluated at time $t_n \in (0, T_n)$, where $T_n = \tau(x_n, v_n)$. Thus, $w_n = \phi^{t_n}(x_n, v_n)$.

Because $SM$ is compact and the exit times $T_n$ are uniformly bounded by $\pi / \delta$, we can extract a convergent subsequence (which we still denote by index $n$) such that the initial states $(x_n, v_n) \to (x_\infty, v_\infty) \in \overline{\partial_+ SM}$, the exit times $T_n \to T_\infty \le \pi / \delta$, and the evaluation times $t_n \to t_\infty$. By the continuity of the flow, the limit state satisfies $\phi^{t_\infty}(x_\infty, v_\infty) = w$.

We must verify that the limit trajectory generated by $(x_\infty, v_\infty)$ is a valid chord. The initial state $(x_\infty, v_\infty)$ belongs to the closure $\overline{\partial_+ SM} = \partial_+ SM \cup S(\partial M)$. Suppose, for the sake of contradiction, that $(x_\infty, v_\infty) \in S(\partial M)$, meaning it is a unit-speed glancing vector. By Lemma \ref{lem:trapped}, the unique solution to the magnetic ODE for a glancing initial vector is a curve permanently trapped within $\partial M$. However, this contradicts the fact that the trajectory passes through $w \in SM^\circ$.

Therefore, $(x_\infty, v_\infty) \in \partial_+ SM$ is strictly inward-pointing. Furthermore, because the trajectory reaches $w \in SM^\circ$, the evaluation time $t_\infty$ (and thus the total exit time $T_\infty$) must be strictly greater than zero. Because $T_\infty \le \pi/\delta$ is finite, the limit trajectory is a valid magnetic chord that exits the manifold. Thus, $w \in \mathcal{U}$, proving that $\mathcal{U}$ is closed.
\end{proof}

\begin{lemma} \label{lem:minimizing}
Assume the boundary $\partial M$ is totally geodesic and $\Omega_x = 0$ for all $x \in \partial M$. Suppose the magnetic potential satisfies $\max_{x \in M} |\omega_x|_g < 1$ and every magnetic chord has Ma\~n\'e action $\pi$. Then every magnetic chord minimizes the free-time action functional.\label{lemma4.14}
\end{lemma}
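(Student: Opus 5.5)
The plan is to recast the free-time action problem as a Finsler length problem and then identify the Finsler distance between antipodal boundary points. First, by Lemma \ref{lemma4.5}, $\Omega$ vanishes on $\partial M$. By Lemma \ref{lemma4.13}, the flow is then non-trapping, and by Proposition \ref{prop:rs} every magnetic chord starting at $x\in\partial M$ ends at $-x$. So the statement to prove is that for every $x$, every chord from $x$ to $-x$ minimizes ${\mathcal A}$ among all curves $c:[0,T]\to M$, with $T$ free, joining $x$ to $-x$.

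The reduction to the Mañé action is pointwise AM--GM: $\tfrac12|v|_g^2+\tfrac12\ge |v|_g$, with equality iff $|v|_g=1$. Hence ${\mathcal A}(c)\ge A(c)$ for every curve, with equality for unit-speed curves, in particular for chords. It therefore suffices to show $A(c)\ge\pi$ for every curve from $x$ to $-x$.

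The key observation is that, because $\max|\omega|_g<1$, the Lagrangian $F(z,v):=|v|_g-\omega_z(v)$ is a Randers Finsler metric on $M$. It is positive on nonzero vectors and strongly convex, and $A(c)=L_F(c)$ is its Finsler length. The Euler--Lagrange equations of $F$ are exactly the magnetic equations \eqref{eq4.1}--\eqref{eq4.2} up to reparametrization, so the unit-speed $F$-geodesics are the magnetic geodesics. The claim is thus equivalent to $d_F(x,-x)=\pi$, since every chord has $F$-length $\pi$ by hypothesis.

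To prove $d_F(x,-x)\ge\pi$, I will take an $F$-length minimizing curve $c$ from $x$ to $-x$ in the compact manifold $M$. I will then classify it.

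\emph{Existence and regularity.} This is the step I expect to be the main obstacle: existence and $C^1$ regularity of minimizers in a manifold with boundary. My first attempt is to show that $\partial M$ is $F$-convex, indeed $F$-totally geodesic, using three facts: $\partial M$ is $g$-totally geodesic, $\iota^*\omega=0$, and $\Omega|_{\partial M}=0$. Together these say that magnetic geodesics tangent to $\partial M$ stay in $\partial M$ (Lemma \ref{lemma4.6}). Given this convexity, the standard argument (Arzel\`a--Ascoli plus lower semicontinuity of length, then local shortening in convex charts or in a slight collar extension of $M$ beyond $\partial M$ with $\omega$ extended so that $|\omega|<1$) shows that a minimizer exists and is a unit-speed magnetic geodesic, possibly touching $\partial M$.

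\emph{Classification.} By Lemma \ref{lemma4.6}, a magnetic geodesic cannot touch $\partial M$ tangentially at an interior time without lying in $\partial M$ entirely. So the minimizer is one of two things. Either it lies in $\partial M$, where $\omega$ vanishes on tangent vectors, so its $F$-length equals its round-sphere $g$-length and is at least $d_{{\mathbb S}^{n-1}}(x,-x)=\pi$. Or it meets $\partial M$ only at its endpoints, in which case it is a magnetic chord, whose action is $\pi$ by hypothesis. A minimizer could also in principle return to $\partial M$ transversally at an intermediate point. In that case it splits into concatenated chords and boundary arcs, and its length is a sum of terms each equal to $\pi$ or positive, so it is at least $\pi$; alternatively, such a corner is excluded because a minimizer is $C^1$.

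In every case $d_F(x,-x)\ge\pi$, so every chord, having $A=\pi$, realizes $d_F(x,-x)$. Combining this with the AM--GM inequality shows that every chord minimizes the free-time action.
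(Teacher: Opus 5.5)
Your proposal is correct, and its decisive step is the same as the paper's. Take a global minimizer from $x$ to $-x$. If it never enters $M^\circ$, it is a curve on the round sphere $\partial M$, where $\omega$ vanishes on tangent vectors, so its action is at least $\pi$. Otherwise, a maximal interval $(t_1,t_2)$ on which it lies in $M^\circ$ is an unconstrained minimizer there, hence a magnetic chord of action $\pi$ (tangential entry is excluded by Lemma \ref{lemma4.6}), and the rest of the curve only adds nonnegative action.

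What differs is the functional you minimize. You minimize the Randers length $F(z,v)=|v|_g-\omega_z(v)$, which is the Ma\~n\'e action itself, and pass to ${\mathcal A}$ through $\tfrac12|v|_g^2+\tfrac12\ge|v|_g$. The paper minimizes ${\mathcal A}$ directly over pairs $(T,\sigma)$, and must invoke \cite{CIPP} to keep $T$ bounded away from $0$ and $\infty$ along minimizing sequences. Your route buys three things:
\begin{itemize}
\item an elementary existence argument, via reparametrization invariance, Arzel\`a--Ascoli and lower semicontinuity of a convex length;
\item a transparent reading of $\max|\omega|_g<1$ as exactly the condition that $F$ be a Finsler metric;
\item directly the minimality of $A_\omega$, which is what the proof of Theorem \ref{theorem4.1} actually uses.
\end{itemize}
The paper's route stays in the standard free-time framework, where the unit speed of critical points comes from the time variation rather than from a reparametrization.

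You should, however, drop the step you flag as the main obstacle. Proving that the minimizer is a magnetic geodesic even where it touches the merely weakly convex (totally geodesic) boundary would need extra machinery, such as a Finsler version of Bishop's ``infinitesimal convexity implies local convexity'' or $C^1$ regularity of shortest paths in Finsler manifolds with boundary. It is also unnecessary. Your fallback concatenation argument, stated as ``either the minimizer never enters $M^\circ$, or pick one maximal interior interval'' rather than only for transversal returns, handles every way the curve can touch $\partial M$ using interior regularity alone. That is exactly how the paper proceeds.
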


\begin{proof}
Because $\max_{x \in M} |\omega_x|_g \le \kappa < 1$, the Lagrangian $L(x,v) + 1/2 = \frac{1}{2}|v|_g^2 - \omega_x(v) + \frac{1}{2}$ is positive on $TM$. Specifically, for any $v \in TM$, $L(x,v) + 1/2 \ge \frac{1}{2}(|v|_g^2 - 2\kappa|v|_g + 1) > 0$. This guarantees the energy level $E = 1/2$ is strictly above the Ma\~n\'e critical value $c$.

Let $x, y \in \partial M$ be distinct. To minimize the free-time action of curves from $x$ to $y$, we reparametrize curves to a fixed domain $[0,1]$ and consider the space of pairs $(T, \sigma) \in (0, \infty) \times H^1([0,1], M)$ such that $\sigma(0) = x$ and $\sigma(1) = y$. The free-time action functional on this manifold is given by
\begin{equation}
   {\mathcal A}(T, \sigma) = \int_0^1 \left( \frac{1}{2T}|\dot{\sigma}|_g^2 - \omega_{\sigma}(\dot{\sigma}) + \frac{T}{2} \right) ds.
\end{equation}
Because $E = 1/2 > c$, the free-time action functional is bounded below. Moreover, as proved in \cite{CIPP}, this inequality also ensures that for any minimizing sequence, the times $T$ are bounded away from zero and infinity. With $T$ constrained to a compact interval in $(0, \infty)$, the sequence of paths has uniformly bounded $H^1$-norms. Because $M$ is compact, the direct method of the calculus of variations guarantees the existence of a global minimum $(T, \sigma)$ yielding an absolutely continuous minimizing curve $\gamma: [0, T] \to M$.

Let $\gamma_0$ be a magnetic chord connecting $x\in \partial M$ to $y\in \partial M$ through the interior. By hypothesis, the Ma\~n\'e action is $A(\gamma_0) = \pi$, and since $\gamma_0$ is unit speed then its free-time action equals its Ma\~n\'e action so ${\mathcal A}(\gamma_0) = \pi$. If $\gamma$ is a candidate minimizer of the free-time action then ${\mathcal A}(\gamma)\le {\mathcal A}(\gamma_0)=\pi$.

There are two possibilities for the minimizer $\gamma$:
\begin{enumerate}
    \item $\gamma$ remains entirely within the boundary $\partial M$. Because $\omega$ vanishes on tangent vectors to the boundary, the Ma\~n\'e action of $\gamma$ reduces to the length of $\gamma$. By the rigidity of the scattering relation, the endpoints must be antipodal, $y = -x$, snd $\partial M$ is a standard round hemisphere boundary, so the distance between antipodal points is $\pi$. Thus, $\pi\le A(\gamma)\equiv {\mathcal A}(\gamma) $.

    \item $\gamma$ enters the interior of $M$. Let $(t_1, t_2) \subseteq [0, T]$ be a maximal open interval where $\gamma$ lies strictly in the interior $M^\circ$. Because $\gamma$ is an unconstrained minimizer on this interval, it is a smooth solution to the Euler-Lagrange equations for the free-time action. Thus, the restriction $\gamma|_{(t_1, t_2)}$ is a unit-speed magnetic geodesic connecting boundary points; \emph{i.e.}, it is a magnetic chord. By hypothesis, the Ma\~n\'e action of this segment alone is $A\left (\gamma|_{(t_1, t_2)}\right ) =\pi$ and since this segment is a unit speed magnetic geodesic its free-time action is therefore ${\mathcal A}\left (\gamma|_{(t_1, t_2)}\right )=\pi$. Because the Lagrangian $L+1/2$ is strictly positive everywhere on $TM$, any additional portion of $\gamma$ outside the interval $(t_1, t_2)$ would add positive action. However, since the total action satisfies ${\mathcal A}(\gamma) \le \pi$, the remaining segments must have zero action, and therefore zero transit time. Thus, $(t_1, t_2) = (0, T)$, meaning $\gamma$ consists exactly of a single interior chord with ${\mathcal A}(\gamma) = \pi$.
\end{enumerate}

Hence the minimum of ${\mathcal A}$ over all curves joining boundary points is $\pi$, and every magnetic chord achieves exactly ${\mathcal A} = \pi$, so every magnetic chord is a global minimizer.
\end{proof}

\subsection{Proof of the main results}\label{section4.6}

Some remarks on terminology are in order. It is useful below to refer to solutions of equation \eqref{eq4.1} as ``$(g,\omega)$ magnetic geodesics'' and to write the Ma\~n\'e action as $A_{\omega}$. If a curve $\gamma$ in $M$ lifts to a curve in the cotangent bundle with endpoints $(x,v)$ and $(y,w)$ we will simply say that ``$\gamma$ joins $(x,v)$ to $(y,w)$''.

\begin{proof}[Proof of Theorem \ref{thm:MagBangert}] By Proposition \ref{proposition4.12} we have $\mathcal{S}_{g,\Omega} = \mathcal{S}_{\rm can}$, and since the standard hemisphere is a reversible scattering system, it follows that our magnetic system is also \emph{boundary reversible}: $\mathcal{S}(-\mathcal{S}(x,v)) = (x,-v)$.


Let $(y, w) = \mathcal{S}(x,v)$. Let $\gamma$ be the $(g,\omega)$ magnetic geodesic from $(x,v)$ to $(y,w)$. By hypothesis, $A_\omega(\gamma) = \pi$.
Let $\sigma$ be the $(g,\omega)$ magnetic geodesic starting from $(y, -w)$. By boundary reversibility, $\sigma$ must exit exactly at $(x, -v)$. By the hypothesis on the Ma\~n\'e action, $A_\omega(\sigma) = \pi$. Let $\overline{\sigma}$ be the time reversal of $\sigma$. The curve $\overline{\sigma}$ travels from $x$ to $y$ and is a magnetic geodesic for the pair $(g, -\omega)$. Evaluated under the
$(g, \omega)$ system, its Ma\~n\'e action is $A_\omega(\overline{\sigma}) = T_\sigma + \int_\sigma \omega$.

Because $\gamma$ is a global minimizer of the action from $x$ to $y$ (Lemma \ref{lem:minimizing}), evaluating the reversed curve $\overline{\sigma}$ under the forward system yields the inequality
\begin{equation*}
    A_\omega(\gamma) \le A_\omega(\overline{\sigma}) \implies \pi \le T_\sigma +\int_\sigma \omega
\end{equation*}
Because $A_\omega(\sigma) = T_\sigma - \int_\sigma \omega = \pi$, we substitute $T_\sigma = \pi +\int_\sigma \omega$ to obtain
\begin{equation*}
    \pi \le \pi +2\int_\sigma \omega \implies \int_\sigma \omega \geq 0.
\end{equation*}
By symmetry, because $\sigma$ minimizes the action from $y$ to $x$, evaluating the time-reversal of $\gamma$ yields $\int_\gamma \omega \geq 0$.
We integrate this non-negative continuous function over $\partial_+ SM$ with respect to the standard measure $d\mu$. By Santal\'o's formula (which holds for non-trapping magnetic flows):
\begin{equation*}
    \int_{\partial_+ SM} \left( \int_{\gamma_{x,v}} \omega \right) d\mu = \int_{SM} \omega \, d\mu_{L} = 0
\end{equation*}
because $\omega$ is an odd function on $SM$. Therefore, $\int_\gamma \omega \equiv 0$ identically.

This implies $A_\omega(\overline{\sigma}) = T_\sigma = \pi$. Because $\overline{\sigma}$ achieves the minimum action $\pi$ required to cross from $x$ to $y$, it is also a global minimizer for the forward system. Consequently, it must satisfy the Euler-Lagrange equations for the forward system, meaning it is governed by the ODE for magnetic geodesics for $(g, \omega)$.
However, as a time-reversal of $\sigma$, the curve $\overline{\sigma}$ satisfies the ODE for the magnetic geodesics for the reversed system $(g, -\omega)$.

Equating the accelerations of both systems along the same curve yields $Y(v) = -Y(v)$ and this gives $Y(v) \equiv 0$. Since every $(x,v)\in SM$ is hit by some magnetic chord, the magnetic field $\Omega \equiv 0$ everywhere on $M$.
\end{proof}

\begin{proof}[Proof of Remark \ref{remark4.2}]
Let $(x,v) \in \partial_+ SM$. We have proved in Proposition \ref{prop:rs} that the magnetic geodesic $\gamma_{x,v}$ exits the manifold exactly at the antipodal point $-x$. For a unit-speed magnetic geodesic, the Ma\~n\'e action is:
\begin{equation*}
    A(\gamma_{x,v}) = \int_{\gamma_{x,v}} \left( |\dot{\gamma}|_g - \omega(\dot{\gamma}) \right) dt = L_g(\gamma_{x,v}) - \int_{\gamma_{x,v}} \omega=\pi,
\end{equation*}
which yields $L_g(\gamma_{x,v}) = \pi + \int_{\gamma_{x,v}} \omega$. By hypothesis the Riemannian length of any curve connecting $x$ to $-x$ is bounded below by $\pi$, so $L_g(\gamma_{x,v}) \ge \pi$. Thus:
\begin{equation} \label{eq:action_ineq}
    \pi + \int_{\gamma_{x,v}} \omega \ge \pi \implies \int_{\gamma_{x,v}} \omega \ge 0
\end{equation}

We now integrate the 1-form $\omega$ over the entire unit tangent bundle $SM$ with respect to the Liouville measure $d\mu_L$. By assumption, the magnetic flow is non-trapping and thus we can apply Santal\'o's formula to obtain:
\begin{equation}
  0=  \int_{SM} \omega \, d\mu_L = \int_{\partial_+ SM} \left( \int_{\gamma_{x,v}} \omega \right) d\mu.\label{eq:w=0}
\end{equation}
Since equation \eqref{eq:action_ineq} establishes that the inner line integral $\int_{\gamma_{x,v}} \omega \ge 0$ everywhere on $\partial_+ SM$, \eqref{eq:w=0} forces
\begin{equation*}
    \int_{\gamma_{x,v}} \omega \equiv 0 \quad \text{for all } (x,v) \in \partial_+ SM
\end{equation*}
This implies that for every such chord, the action achieves the Riemannian lower bound: $A(\gamma_{x,v}) = L_g(\gamma_{x,v}) = \pi$. Because $d_g(x,-x) = \pi$, these magnetic chords are also length-minimizing Riemannian geodesics. Consequently, they satisfy both the magnetic and the Riemannian geodesic equations, which forces $\Omega$ to vanish.
\end{proof}

\section{Asymptotically flat spacetimes} \label{section5}
\setcounter{equation}{0}

\noindent Consider $(n+1)$-dimensional Minkowski spacetime $({\mathbb M}, g)$, $n\ge 2$. It is well known that there is an embedding $\phi$ of $(M,g)$ into the \emph{Einstein static universe} ${\mathbb R}\times {\mathbb S}^n$ with metric ${\bar g}=-dt^2+g({\mathbb S}^n,{\rm can})$ such that $\phi^* {\bar g}$ is conformally isometric to $g$. The closure of the image of $({\mathbb M}, g)$, together with the induced metric ${\bar g}$, yields a conformal compactification of $({\mathbb M}, g)$. As explained in standard texts such as \cite{Wald}, the boundary of $M$ in ${\bar M}$ consists of null hypersurfaces denoted by ${\mathcal I}^{\pm}$ (called future and past null infinity) as well as three points $i^{\pm}$ and $i^0$, the last of which denotes \emph{spatial infinity} and serves to one-point-compactify every Cauchy surface in ${\mathbb M}$, each of which then becomes an $n$-sphere.

Future and past null infinity are ruled by null geodesics of ${\bar g}$. These are called the \emph{generators} of ${\mathcal I}^{\pm}$. They have a peculiar property. Consider some point $p\in {\mathcal I}^-$ and consider the congruence of all null geodesics which depart from $p$. One such geodesic is a generator of ${\mathcal I}^-$, call it $\psi^-$, which reaches $i^0$ and can be extended by concatenation with a null generator $\psi^+$ of ${\mathcal I}^+$ passing through $q$. All other geodesics are contained entirely in $M$ except for their common past endpoint $p\in {\mathcal I}^-$ and their common future endpoint $q\in{\mathcal I}^+$. As previously, we call $q$ the \emph{spacetime antipode} to $p$, though now it is not defined solely (or at all) by null geodesics that remain on ${\mathcal I}$. The concatenation of $\psi^-$ and $\psi^+$, which we call $\psi$ and to which we give a ${\bar g}$-affine parametrization, is a smooth null geodesic in the Einstein static universe but lies entirely on the conformal infinity of the conformally embedded Minkowski spacetime.

No point $q'$ of $\psi^+$ lying to the causal past of $q$ (as defined by the Einstein static universe metric) can be reached from $p$ by any null geodesic, or any causal curve, except $\psi$. That is, no causal curve in $M$, or containing at least one point of $M$, extends to join $p$ to any such $q'$. Remarkably, every point along every other null generator of ${\mathcal I}^+$ can be reached from $p$ by causal, indeed timelike, curves that lie entirely in $M$ except for their endpoints \cite{PSW, Cameron}. In terms of the chronological future of $p$, we write that $I^+(p,{\bar M})$ contains all of ${\mathcal I}^+$ except for the segment of $\psi^+$ that lies to the causal past of $q$. In fact, $q$ is conjugate to $p$ along any null geodesic that begins at $p$, making $(p,q)$ a pair of \emph{conjugate points at infinity}.

\begin{lemma}[Minkowski spacetime antipodes]\label{lemma5.1}
Let ${\bar {\mathbb M}}$ denote the closure of the image in the Einstein static universe of Minkowski spacetime ${\mathbb M}$, with the usual conformally isometric embedding. For each $p\in{\mathcal I}^-$, there is a spacetime antipode $q\in{\mathcal I}^+$ and a null geodesic generator $\psi^+$ of ${\mathcal I}^+$ through $q$ such that
\begin{itemize}
\item [(i)] if $q'\in \psi^+\cap J^-(q,{\bar {\mathbb M}})$ and $q'\neq q$ then no causal curve in $M$ extends from $p$ to $q'$, and
\item [(ii)] if $q'$ lies in the complement in ${\mathcal I}^+$ of $\psi^+\cap J^-(q,{\bar {\mathbb M}})$ there is always a timelike curve extending from $p$ to $q'$.
\end{itemize}
Moreover,
\begin{itemize}
\item [(iii)] every null geodesic from $p$ ends at $q$.
\end{itemize}
\end{lemma}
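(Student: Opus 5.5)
We will prove the lemma by explicit computation in the standard conformal embedding of ${\mathbb M}$ into the Einstein static universe $({\mathbb R}\times{\mathbb S}^n,-dt^2+g({\mathbb S}^n,{\rm can}))$. Write points of ${\mathbb S}^n$ as $(\chi,\theta)$, with $\chi\in[0,\pi]$ the polar angle from the pole corresponding to spatial origin and $\theta\in{\mathbb S}^{n-1}$. The image of ${\mathbb M}$ is the diamond $\{|t|+\chi<\pi\}$. Its past null boundary is ${\mathcal I}^-=\{t=\chi-\pi,\ 0<\chi<\pi\}$ and its future null boundary is ${\mathcal I}^+=\{t=\pi-\chi,\ 0<\chi<\pi\}$, while $i^0=(0,\chi=\pi)$. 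Each generator of ${\mathcal I}^\mp$ is labelled by $\theta$. The key fact is that null geodesics of the Einstein static universe are great circles on ${\mathbb S}^n$ traversed at unit speed in $t$. Therefore all null geodesics from a point $p=(t_0,x_0)$ refocus at $q=(t_0+\pi,-x_0)$, and no earlier focusing occurs. By conformal invariance of null geodesics as unparametrized curves, the null geodesics of ${\mathbb M}$ from $p$ are exactly the portions inside the diamond of these great-circle null geodesics.

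Fix $p=(\chi_0-\pi,\chi_0,\theta_0)\in{\mathcal I}^-$. Its antipode is $q=(\chi_0,\pi-\chi_0,-\theta_0)$. Since $t(q)=\chi_0=\pi-(\pi-\chi_0)$, the point $q$ lies on ${\mathcal I}^+$, on the generator $\psi^+$ labelled $-\theta_0$. The generator $\psi^-$ of ${\mathcal I}^-$ through $p$ runs outward to $i^0$ and continues as the great circle through $-\theta_0$ along $\psi^+$. This is the concatenation $\psi$ described in the text.

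For (iii), we parametrize the great circles from $x_0$. Each null geodesic from $p$ other than $\psi$ enters the open diamond, because the function $t-\chi+\pi$ increases strictly along any great circle not moving radially outward at unit rate. The geodesic stays in ${\mathbb M}$ until it reaches $-x_0$ at time $t_0+\pi$. The one thing to check is that it cannot exit through ${\mathcal I}^+$ earlier. We will show this by computing $t+\chi$ along the great circle: by the spherical triangle inequality $\chi(s)\le \pi-\chi_0+(\pi-s)$, so $t+\chi<\pi$ strictly until $s=\pi$.

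For (i) and (ii), we use the causal structure of the Einstein static universe: $J^+(p)=\{(t,x): t-t_0\ge d(x,x_0)\}$, and $I^+(p)$ is the same set with strict inequality. A point $q'\in\psi^+$ strictly before $q$ has $t(q')-t_0=d(q',x_0)$ exactly, since $\psi$ is a minimizing great circle before the cut point. Hence $q'\in\partial J^+(p)$, and the only causal curves reaching $q'$ are null geodesic segments with no conjugate point, namely $\psi$ itself. Since $\psi$ lies in $\mathcal I$, no causal curve through $M$ reaches $q'$, which gives (i). For (ii), every other $q'\in{\mathcal I}^+$ satisfies $t(q')-t_0>d(q',x_0)$, so $q'\in I^+(p)$ in the Einstein static universe. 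The remaining task is to produce a timelike curve from $p$ to $q'$ that lies in the diamond except at its endpoints.

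This last step is the main obstacle. We will handle it by convexity. Choose a timelike curve that starts into the diamond from $p$, for example the curve on which $\chi$ decreases at rate $1/2$ while $t$ increases at rate $1$. Follow it to an interior point $r$ close to the origin. From $r$, the point $q'$ is chronologically reachable by a curve approaching ${\mathcal I}^+$ transversally from inside. This uses that $I^-(q')\cap{\mathbb M}$ contains every interior point with $t+d(x,x(q'))<t(q')$, and such points near the origin exist whenever $q'$ is not on the excluded segment. A small perturbation then keeps the concatenation timelike and inside the diamond.
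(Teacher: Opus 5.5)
The paper leaves this lemma as an exercise, so the only question is whether your argument closes. Your setup, part (iii) and part (i) are essentially sound, up to two small imprecisions. First, along the inward radial null geodesic $t-\chi$ is constant after it crosses $\chi=0$; only positivity of $t-\chi+\pi$ is needed. Second, in (i), uniqueness of $\psi$ does not come from absence of conjugate points. It comes from the fact that any causal curve from $p$ to $q'$ projects to a spatial curve of length at most $t(q')-t_0=d(x_0,x(q'))<\pi$, hence to the unique minimizing arc.

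The real gap is the step you flagged in (ii). A point $r$ near $\chi=0$ lying in $I^+(p)\cap I^-(q')$ must satisfy, up to small errors, $t_0+\chi_0<t(r)<t(q')-\chi(q')$. This means $2\chi_0-\pi<\pi-2\chi(q')$, i.e.\ $\chi_0+\chi(q')<\pi$; equivalently, the retarded time of $q'$ exceeds the advanced time of $p$. So your claim that such points exist whenever $q'$ is off the excluded segment is false. Take $\chi_0=3\pi/4$ and $q'=(\pi/4,3\pi/4,\theta_0)$, which lies on the generator labelled $\theta_0$ rather than $-\theta_0$. No causal curve from $p$ gets near the origin before $t\approx\pi/2>t(q')$. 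These early points on the other generators are exactly the nontrivial content of (ii), and your construction never reaches them. Separately, your sample curve with $\dot\chi=-1/2$ exits through $\mathcal{I}^+$ before reaching $\chi=0$ once $\chi_0\ge 2\pi/3$.

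No intermediate point is needed; the triangle-inequality argument you used for (iii) works directly. Write $q'=(t',x')$ with $t'=\pi-\chi'$. Set $T=t'-t_0=2\pi-\chi_0-\chi'$ and $d=d(x_0,x')$, so $d<T$ by your own computation. Let $c(\lambda)=(t_0+\lambda,x(\lambda))$ for $0\le\lambda\le T$, where $x(\lambda)$ runs along a minimizing arc from $x_0$ to $x'$ at constant speed $d/T<1$. Then $c$ is timelike. The inequalities $\chi(\lambda)\le\chi_0+\lambda d/T$ and $\chi(\lambda)\le\chi'+(T-\lambda)d/T$ give
\[
t-\chi+\pi\ge\lambda\Bigl(1-\frac{d}{T}\Bigr)>0,\qquad \pi-t-\chi\ge(T-\lambda)\Bigl(1-\frac{d}{T}\Bigr)>0
\]
for $0<\lambda<T$. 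Hence $c$ lies in the open diamond except at its endpoints $p$ and $q'$. In the example above $d=0$, and $c$ is simply the static curve $\chi=3\pi/4$, $\theta=\theta_0$.
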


We omit the proof, which is an exercise.
Now we ask the following.

\begin{question}\label{question5.2}
For $(M,g)$ an asymptotically flat, globally hyperbolic spacetime, say that for every $p\in{\mathcal I}^-$, every null geodesic from $p$ ends at $q$ and no timelike curve from $p$ ends at $q$. Is the spacetime conformally isometric to Minkowski spacetime?
\end{question}

\begin{remark}\label{remark5.3} \
\begin{enumerate}
\item An asymptotically flat spacetime in which every point of ${\mathcal I}^+$ is in the chronological future of each point of ${\mathcal I}^-$ is said to obey the \emph{null boundary Penrose property} \cite{CD}. Minkowski spacetime does not obey this property.
\item There are globally hyperbolic spacetimes not isometric to Minkowski spacetime which do not obey the null boundary Penrose property. Globally hyperbolic 4-dimensional spacetimes that approach a negative mass Schwarzschild spacetime outside of a timelike tube and are smooth everywhere provide examples. Such spacetimes clearly exist but will not be vacuum and will violate energy conditions \cite{PSW}. They obey parts (i) and (ii) of Lemma \ref{lemma5.1} but not part (iii).
\end{enumerate}
\end{remark}

When we assume that the spacetime metric admits an asymptotically flat metric in its conformal class that obeys the null energy condition, the proof of Proposition \ref{proposition3.1} can be adapted to the asymptotically flat setting as follows. As in subsection \ref{subsection3.1}, we take $g$ to obey the null energy condition, and show that it is Minkowski. Since we have assumed that the spacetime metric is conformally isometric to $g$, the spacetime will be conformally isometric to Minkowski spacetime.

\begin{proposition}[Null energy rigidity for asymptotic flatness]\label{proposition5.4}
Let $(M,g)$ be an asymptotically simple, asymptotically flat spacetime of dimension $n= 4$. Assume that the null energy condition \eqref{eq3.1} holds for all null vectors $\ell\in T_rM$ and all $r\in M$. Assume also that for each $p\in {\mathcal I}^-$, every null geodesic in spacetime with idealized past endpoint $p$ has idealized future endpoint $q$ and no causal curve in spacetime meets ${\mathcal I}^+$ to the past of $q$.\footnote
{The phrase ``and no causal curve in spacetime meets ${\mathcal I}^+$ to the past of $q$'' seems necessary to preclude the possibility that there is a sequence of timelike curves in the past of $q$ which converges to the generator of ${\mathcal I}$ from $p$ to $q$.}
Then $(M,g)$ is conformal to Minkowski spacetime.
\end{proposition}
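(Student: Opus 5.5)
The plan is to adapt the proof of Proposition \ref{proposition3.1} essentially line by line, with the asymptotically flat causal structure of Lemma \ref{lemma5.1} taking the place of spacetime antipodes on the timelike cylinder. Fix $p\in{\mathcal I}^-$. By hypothesis every null geodesic from $p$ has future endpoint $q$, and no causal curve in $M$ meets ${\mathcal I}^+$ to the past of $q$.

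\emph{Step 1: $q\in\partial I^+(p)$ in $\bar M$.} This is the step I expect to be the main obstacle. In the AdS case it followed from the fact that antipodes are defined by the causal structure of ${\mathcal I}$ itself. Here that role is played by the footnoted hypothesis, so we argue by contradiction. Suppose $q\in I^+(p,\bar M)$. Then some point $q'$ slightly to the causal past of $q$ on the generator $\psi^+$ through $q$ would also lie in $I^+(p)$, since $I^+$ is open in $\bar M$ near $q$. A timelike curve from $p$ to $q'$ can be pushed into $M$ except for its endpoints. This gives a causal curve in spacetime meeting ${\mathcal I}^+$ to the past of $q$, which is excluded. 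Hence $q\notin I^+(p)$.

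Every null geodesic $\gamma$ from $p$ reaches $q$, so $\gamma$ is a causal curve from $p$ to a point of $\partial I^+(p)$ that is not in $I^+(p)$. It follows that $\gamma$ lies on the achronal boundary $\partial I^+(p)$. Therefore $\gamma$ has no point conjugate to $p$ in $M$: an interior conjugate point would allow a timelike deformation to $q$. Conformal invariance of null geodesics and of conjugacy lets us work in $\tilde g$ near $p$, which justifies treating the congruence from $p$ as issuing from a point.

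\emph{Step 2: pointwise curvature vanishing.} The congruence from $p$ has zero rotation. The Raychaudhuri equation \eqref{eq3.2}, combined with $\ric(\ell,\ell)\ge 0$ and the absence of conjugate points along the whole geodesic up to $q$, gives $\ric(\ell,\ell)=0$ and $\sigma=0$ along every such geodesic. The argument is the standard one: if either quantity were positive somewhere, $\theta$ would become negative and then blow up in finite affine parameter. Affine completeness in $M$ toward ${\mathcal I}^+$ is what makes this work, as in the AdS case. The shear equation then forces $C(\cdot,\ell,\cdot,\ell)=0$ on $\ell^\perp$.

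\emph{Step 3: conclusion.} Let $r\in M$ and let $\ell_r$ be any null direction at $r$. By asymptotic simplicity, the null geodesic through $r$ in direction $\ell_r$ has past endpoint some $p\in{\mathcal I}^-$, and Step 2 applies to it. Lemma \ref{lemma3.2} then gives $\ric\propto g$, and Lemma \ref{lemma3.3} (here $\dim=4$) gives $C=0$. So $g$ is Einstein and conformally flat, hence of constant curvature. The curvature must vanish because the conformal boundary is null: a nonzero cosmological constant would make ${\mathcal I}$ spacelike or timelike. Thus $g$ is locally flat, and asymptotic simplicity with a single asymptotic region identifies it with Minkowski spacetime, so $(M,g)$ is conformal to Minkowski.
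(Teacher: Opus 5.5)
Your proposal is correct and follows the paper's proof, which likewise reruns the argument of Proposition~\ref{proposition3.1}: no conjugate points strictly between $p$ and $q$, the Raychaudhuri and shear equations, Lemmas~\ref{lemma3.2} and~\ref{lemma3.3}, and then a vanishing Einstein constant by asymptotic flatness; your Step~1 also makes explicit how the footnoted hypothesis replaces the antipode argument, which the paper leaves implicit. One small tightening in Step~2: to make $\theta$ negative you first need $\theta\le 0$, which comes from \emph{past} completeness (a positive $\theta$ would blow up at finite past affine parameter, i.e.\ at a point of $M$ conjugate to $p$), so completeness in both directions is used, not only completeness toward ${\mathcal I}^+$.
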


\begin{proof}
We need only minor modifications to the proof of Proposition \ref{proposition3.1}. We now first pick an arbitrary point $r\in M$ and an arbitrary future-null direction there. This defines a null geodesic $\gamma$ which extends to some $p\in {\mathcal I}^-$ in the past and to some $q\in{\mathcal I}^+$ in the future. But then $p$ and $q$ are spacetime antipodes, and as before no null geodesic from $p$ to $q$ has conjugate points. Then, as before, we conclude that $g$ is Einstein and locally conformally flat. By asymptotic flatness, the Einstein constant must be zero in this case.
\end{proof}

However, if we replace the assumption of the null energy condition with the assumption that $g$ is static or, more generally, stationary, the optical metric in the asymptotically flat case is a complete metric on a noncompact manifold. We do not have a manifold-with-boundary, as we had in the asymptotically AdS case, and minimizing geodesics of the optical metric have infinite lengths. The theorem of \cite{Bangert} and the arguments of our Section \ref{section4} do not apply.

\appendix

\section{Null geodesics in stationary and static spacetimes}\label{appendix}
\setcounter{equation}{0}

\noindent We work in local coordinates. Then from Definition \ref{definition2.3}, an arbitrary stationary metric on a globally hyperbolic spacetime $(M,g)$ can be written as
\begin{equation}
\label{eqA.1}
\begin{split}
ds^2 =&\, -V \left ( dt+\omega_idy^i\right )^2 +h_{ij}dy^idy^j\\
=&\,-Vdt^2-2V\omega_idy^idt+\left ( h_{ij}-V\omega_i\omega_j\right ) dy^idy^j
\end{split}
\end{equation}
where $\frac{\partial}{\partial t}$ is a Killing vector field \cite[Equation 4.3.3, p 149]{Chrusciel}. The component functions are all independent of $t$. If one takes the quotient by identifying points along integral curves of this Killing field, the quotient manifold is the Riemannian manifold $(\Sigma,h)$.

Let $v$ denote the tangent vector field to a future-null geodesic in spacetime. Writing the geodesic equation in local coordinates, we have
\begin{equation}
\label{eqA.2}
g_{ab}\frac{dv^b}{ds}+[bc,a]v^bv^c=0,
\end{equation}
where $[bc,a]:=g_{ad}\Gamma^d_{bc}$ denotes the Christoffel symbols of the first kind. Those symbols are
\begin{equation}
\label{eqA.3}
\begin{split}
[00,0]=&\, 0,\\
[0i,0]=&\, [i0,0]=-\frac12 \partial_i V,\\
[ij,0]=&\, -\frac12\left [ \partial_i \left ( V\omega_j\right ) +\partial_j \left (V\omega_i\right ) \right ]\\
[00,i]=& \frac12 \partial_i V,\\
[0j,i]=&\, [j0,i]=\frac12\left [ \partial_i \left ( V\omega_j\right ) -\partial_j \left ( V\omega_i \right ) \right ],\\
[jk,i]=&\, \frac12 \left [ \partial_j \left ( h_{ki} -V\omega_k\omega_i\right ) +\partial_k \left ( h_{ij} -V\omega_i\omega_j \right )\right . \\
&\, \left . -\partial_i \left ( h_{jk} -V\omega_j\omega_k\right ) \right ],
\end{split}
\end{equation}
where $0$ denotes the subspace spanned by $\frac{\partial}{\partial t}$ and $i$, $j$, and $k$ run over the orthogonal complement (so $i\neq 0$, $j\neq 0$, and $k\neq 0$).

The component of the geodesic equation along $\frac{\partial}{\partial t}$ gives
\begin{equation}
\label{eqA.4}
\begin{split}
0=&\, -V\frac{dv^0}{ds}-V\omega_i\frac{dv^i}{ds}-Vv^iv^j\partial_i\omega_j-v^iv^0\partial_iV -v^i\omega_iv^j\partial_jV,\\
=&\, -\frac{d}{ds}\left ( V \left ( v^j\omega_j +v^0\right ) \right ],
\end{split}
\end{equation}
where we've used the chain rule $v^i \partial_i=\frac{d}{ds}$ when differentiating ($t$-independent) metric component functions along integral curves. Hence
\begin{equation}
\label{eqA.5}
V \left ( v^j\omega_j +v^0\right )=E,
\end{equation}
where $E$ is constant along each integral curve. Moreover, the integral curves are null, so they obey
\begin{equation}
\label{eqA.6}
0=V\left ( v^i\omega_i+v^0\right )^2-h(v,v).
\end{equation}
Hence $h(v,v):=h_{ij}v^iv^j=\varepsilon^2/V$, and $\varepsilon =0$ if and only if the curves are trivial.

The components of the geodesic equation in directions orthogonal to $\frac{\partial}{\partial t}$ yield
\begin{equation}
\label{eqA.7}
\begin{split}
0=&\, -V\omega_i \frac{dv^0}{ds} +\frac12 v^0v^0\partial_i V - \left [ \omega_i\partial_j V -\omega_j \partial_i V\right ]v^0v^j +\left ( h_{ij} -V\omega_i\omega_j \right ) \frac{dv^j}{ds} \\
&\, +V\left ( \partial_i \omega_j -\partial_j \omega_i\right ) v^0v^j +\frac12 \left [ \partial_j \left ( h_{ki} \right ) +\partial_k \left ( h_{ij} \right ) -\partial_i \left ( h_{jk} \right )\right ]v^jv^k\\
&\, -V\omega_iv^jv^k\partial_j\omega_k - \omega_iv^j\omega_j v^k\partial_k V +\frac12 v^j\omega_jv^k\omega_k \partial_i V +Vv^k\omega_k v^j \left ( \partial_i \omega_j - \partial_j\omega_i\right )\\
=&\, -V\omega_i \left [ \frac{dv^0}{ds} +\omega_j \frac{dv^j}{ds} +v^jv^k\partial_j\omega_k \right ] -\omega_i\partial_j V \left [ v^0v^j +v^k\omega_k v^j\right ]\\
&\, +\partial_i V \left [ \frac12 v^0v^0 +v^j\omega_j v^0 +\frac12 \left (v^j\omega_j\right )^2 \right ] \\
&\, + h_{ij}  \frac{dv^j}{ds} +\frac12 \left ( \partial_j h_{ki} +\partial_k h_{ij} -\partial_i h_{jk} \right )v^jv^k \\
&\, +V\left [ v^k\omega_k+v^0\right ] v^j \left ( \partial_i \omega_j- \partial_j\omega_i\right ) .
\end{split}
\end{equation}
The first line in the last expression vanishes by using the top line of \eqref{eqA.4}. Using \eqref{eqA.6}, we see that the terms in square brackets in the second line become simply $\frac{1}{2V}h(v,v)$. Furthermore, if $D$ is the Levi-Civita connection compatible with $h$, the terms in the third line reduce to $h_{ij} \left ( D_v v\right )^j$. Finally, the fourth line simplifies using \eqref{eqA.6}, so we obtain
\begin{equation}
\label{eqA.8}
0= h_{ij} \left ( D_v v\right )^j +\frac{1}{2V} h(v,v) \partial_i V -\sqrt{Vh(v,v)} v^j \left ( \partial_i \omega_j- \partial_j\omega_i\right ) .
\end{equation}
Contracting against $v^i$, we can check that $|v|=const$.

Finally, define the \emph{optical metric} ${\tilde h}_{ij}:=\frac{1}{V} h_{ij}$, and let ${\tilde D}$ denote its Levi-Civita connection. Furthermore, let $x^i(t)$ denote the components of the projection into the optical manifold of the null geodesic, so that $v^i=\frac{dx^i}{dt}$. Reparametrizing this curve by $t\mapsto{\tilde t}=\int\frac{dt}{V\circ x(t)}$, we obtain $v^i=\frac{1}{V}{\tilde v}^i$ where ${\tilde v}^i=\frac{dx^i}{d{\tilde t}}$. Then it is easy to see that \eqref{eqA.8} reduces to
\begin{equation}
\label{eqA.9}
\begin{split}
0= &\, {\tilde h}_{ij}\left ( {\tilde D}_{\tilde v} {\tilde v}\right )^j -\sqrt{{\tilde h}({\tilde v},{\tilde v)}} {\tilde v}^j \left ( \partial_i \omega_j- \partial_j\omega_i\right )\\
= &\, {\tilde h}_{ij}\left ( {\tilde D}_{\tilde v} {\tilde v}\right )^j - \varepsilon{\tilde v}^j \left ( \partial_i \omega_j- \partial_j\omega_i\right ),
\end{split}
\end{equation}
where we use that ${\tilde h}({\tilde v},{\tilde v})=Vh(v,v)=\varepsilon^2$. We can write this result as
\begin{equation}
\label{eqA.10}
{\tilde h}_{ij}\left ( {\tilde D}_{\tilde v} {\tilde v}\right )^j = \varepsilon \Omega_{ij}{\tilde v}^j = \varepsilon \left ( \partial_i\omega_j-\partial_j\omega_i\right ){\tilde v}^j,
\end{equation}
where we introduce the quantity $\Omega=d\omega$.

Integral curves of solutions ${\tilde v}$ of this equation in Riemannian manifolds are called \emph{magnetic geodesics} of $(\Sigma,{\tilde h}, \omega)$. Equation \ref{eqA.10} is the Euler-Lagrange equation for the free-time action defined by $(\Sigma,{\tilde h}, \omega)$. Furthermore, all solutions of the Euler-Lagrange equation for the Ma\~n\'e action dfined by $(\Sigma,{\tilde h}, \omega)$ are magnetic geodesics up to parametrization; i.e., they solve \eqref{eqA.10} after a reparametrization if necessary.

\section{Two lemmata used in Section \ref{section3}}\label{appendixB}

\begin{proof}[Proof of Lemma \ref{lemma3.2}]
Fix an orthonormal basis $\left \{ e_0,e_1,\dots,e_{n-1}\right \}$ such that $e_0$ is timelike. Then $e_0\pm e_1$ is null, so
\begin{equation}
\label{eqB.1}
\begin{split}
0=&\, T(e_0+e_1,e_0+e_1)= T(e_0,e_0)+T(e_1,e_1)+2T(e_0,e_1),\\
0=&\, T(e_0-e_1,e_0-e_1)= T(e_0,e_0)+T(e_1,e_1)-2T(e_0,e_1).
\end{split}
\end{equation}
Subtracting one of these from the other, we get
\begin{equation}
\label{eqB.2}
T(e_0,e_1)=0,
\end{equation}
and thus
\begin{equation}
\label{eqB.3}
T(e_0,e_0)=-T(e_1,e_1)
\end{equation}
Replacing $e_1$ by $e_2$ and so forth up to $e_{n-1}$, we see that
\begin{equation}
\label{eqB.4}
\begin{split}
T(e_0,e_i)=&\, 0\text{ for }i=1,2,\dots,n-1,\text{ and }\\
T(e_0,e_0)=&\, -T(e_1,e_1)=-T(e_2,e_2)=\dots=-T(e_{n-1},e_{n-1}).
\end{split}
\end{equation}

Next, $e_0+\frac{1}{\sqrt{2}}\left ( e_1+e_2\right )$ is also null, so (using that $T(e_0,e_1)=0$ and $T(e_0,e_2)=0$) we have that
\begin{equation}
\label{eqB.5}
\begin{split}
0=&\, T(e_0,e_0)+\frac12 T(e_1+e_2,e_1+e_2)\\
=&\, T(e_0,e_0)+\frac12 \left [T(e_1,e_1)+T(e_2,e_2) +2T(e_1,e_2) \right ].
\end{split}
\end{equation}
But $T(e_0,e_0)=-T(e_1,e_1)=-T(e_2,e_2)=-\frac12 \left [ T(e_1,e_1)+T(e_2,e_2)\right ]$, so \eqref{eqB.4} yields $T(e_1,e_2)=0$. Cycling through the basis vectors as before, we have
\begin{equation}
\label{eqB.6}
T(e_i,e_j)=0\quad \forall i,j\in \{ 1,\dots,n-1\}\text{ such that }i\neq j.
\end{equation}
Then from \eqref{eqB.4} and \eqref{eqB.6} we have that $T$ is proportional to $g$.
\end{proof}

\begin{proof}[Proof of Lemma \ref{lemma3.3}]
Define the complex null basis $\left \{ \ell, k, m, {\bar m}\right \}$ such that each basis vector is null, $\ell ={\bar \ell}$ and $k={\bar k}$ are real, $g(\ell,k)=-1$, and $g(m,{\bar m})=1$, and products of the other basis vectors vanish. Define the Weyl scalars
\begin{equation}
\label{eqB.7}
\begin{split}
\Psi_0=&\, C(\ell,m,\ell,m),\\
\Psi_1=&\, C(\ell,k,\ell,m),\\
\Psi_2=&\, C(\ell,m,{\bar m},k),\\
\Psi_3=&\, C(\ell,k,{\bar m},k),\\
\Psi_4=&\, C(k,{\bar m},k,{\bar m}).
\end{split}
\end{equation}
Then $C$ can be expressed as a linear combination of products of these basis vectors, with coefficients given by the scalars $\Psi_i$, $i=0,\dots,4$, and their complex conjugates (see for example \cite[p 44, Equation (298)]{Chandra}), so $C$ vanishes when all of the Weyl scalars vanish.

Let $a\in {\mathbb C}$. We can define a family of complex bases obtained from the one above by null rotation:
\begin{equation}
\label{eqB.8}
\begin{split}
\ell(a):=&\, \ell +{\bar a} m+a{\bar m} +|a|^2 k,\\
k(a):=&\, k,\\
m(a):=&\, m+ak,\\
{\bar m}(a):=&\, {\bar m}+{\bar a}k.
\end{split}
\end{equation}
Then $\left \{ \ell(a), k(a), m(a),{\bar m}(a)\right \}$ is still a complex null basis for each choice of $a\in {\mathbb C}$, with $g(\ell(a),k(a))=-1$, $g(m(a),{\bar m}(a))=1$, etc. By hypothesis,
\begin{equation}
\label{eqB.9}
C(\ell(a),m(a),\ell(a),{\bar m}(a))=0
\end{equation}
for each $a\in {\mathbb C}$.
Expand \eqref{eqB.9} in powers of $a$ using \eqref{eqB.8} and simplify the result using the conditions given in the definition of an algebraic Weyl tensor. This yields
\begin{equation}
\label{eqB.10}
\begin{split}
0=&\, C(\ell(a),m(a),\ell(a),{\bar m}(a))\\
=&\, C(\ell,m,\ell,m) +4aC(\ell,k,\ell,m)+6a^2C(\ell,m,{\bar m},k)\\
&\, + 4a^3C(\ell,k,{\bar m},k) +a^4C(k,{\bar m},k,{\bar m})\\
=&\, \Psi_0 +a\Psi_1+6a^2\Psi_2+4a^3\Psi_3+a^4\Psi_4.
\end{split}
\end{equation}
Since this must hold for all $a\in{\mathbb C}$, we obtain that $\Psi_i=0$, for $i=0,\dots,4$, and so $C$ vanishes identically.
\end{proof}

\subsection*{Data availability statement} This manuscript has no associated data.
\subsection*{Conflict of interest statement} The authors have no conflicts of interest.

\end{document}